\theoremstyle{theorem}
\newtheorem{theorem}{Theorem}[section]
\theoremstyle{definition}
\newtheorem{definition}[theorem]{Definition}
\theoremstyle{proposition}
\newtheorem{proposition}[theorem]{Proposition}
\theoremstyle{corollary}
\newtheorem{corollary}[theorem]{Corollary}
\theoremstyle{lemma}
\newtheorem{lemma}[theorem]{Lemma}
\theoremstyle{lemma}
\newtheorem{remark}[theorem]{Remark}
\theoremstyle{remark}
\theoremstyle{remark}
\newtheorem*{statement}{Statement}
\renewcommand{\thesection}{\hspace{0.01pt}\arabic{section}}
\titleformat{\section}[hang]{\normalfont\normalsize\filright\bf}{\thesection}{0.5em}{}
\titlespacing{\section}{0em}{*0.5}{\wordsep}
\begin{document}

\title{The stable homotopy classification of $(n-1)$-connected $(n+4)$-dimensional polyhedra with  2 torsion free homology}
\author{\small{Jian Zhong PAN and Zhong Jian ZHU}}
\date{}

\maketitle
{\noindent\small{\bf{Abstract}}\quad
In this paper, we study the stable homotopy types of $\mathbf{F}^4_{n(2)}$-polyhedra, i.e., $(n-1)$-connected, at most $(n+4)$-dimensional polyhedra with 2-torsion free homologies. We are able to classify the indecomposable  $\mathbf{F}^4_{n(2)}$-polyhedra. The proof relies on the matrix problem technique which was developed in the classification of representaions of algebras and  applied to homotopy theory by Baues and Drozd.
\

{\noindent\small{\bf{keywords}}\quad
Homotopy; indecomposable; matrix problem}

\

\
\
\newcommand{\he}[1]{\ensuremath{\textbf{#1}}}
\newcommand{\s}[1]{\ensuremath{\scriptstyle{#1}}}
\newcommand{\hua}[1]{\ensuremath{\mathcal{#1}_{0}}}
\newcommand{\mo}[2]{\ensuremath{M_{3^{#1}}^{#2}}}
\section{Introduction}
\label{intro}

Let the $\mathbf{A}_n^k(n\geq k+1)$ be the subcategories of the stable homotopy category consisting of $(n-1)$-connected polyhedra with dimension at most $n+k$. It is a fully additive category if we consider the wedge of two polyhedra as the coproduct of two objects in the category $\mathbf{A}_n^k$. The classification problem of  $\mathbf{A}_n^k(n\geq k+1)$  is to find a complete list of indecomposable isomorphic classes, i.e. the indecomposable homotopy types in  $\mathbf{A}_n^k(n\geq k+1)$. For $k\leq3$, all indecomposable stable homotopy types have been described in \cite{RefBH}. For $k\geq 4$, Drozd shows the classification problem is wild (in the sense similar to that in representation of finite dimensional algebras) in \cite{RefDrMTS} by finding a wild subcategory of $\mathbf{A}_n^4(n\geq 5)$  whose objects are polyhedra with 2-torsion homologies.

In another direction, Baues and Drozd also consider full subcategory $\mathbf{F}_n^k$ of $\mathbf{A}_n^k(n\geq k+1)$  consisting of polyhedra with torsion free homology groups. For $k\leq 5$, such polyhedra have been classified to have finite indecomposable homotopy types in \cite{RefTF5cels}, \cite{RefTF6cels} or \cite{RefDrMSP}, \cite{RefDrMTS}. For $k=6$, Drozd got tame type classification of congruence classes of homotopy types, and proved that, for $k>6$, this problem is wild in \cite{RefDrFP}.

This is the second of a series of papers devoted to the homotopy theory of  $\mathbf{A}_n^k$-polyhedron.  In our previous paper \cite{PZ}, we noticed that, for $(n-1)$-connected and at most $(n+k)$-dimensional $(k<7)$ spaces with 2 and 3-torsion free homologies, the  classification of indecomposable stable homotopy types essentially reduces
to that of spaces with torsion free homologies. When homologies of the spaces involved have 3-torsion, the reduction process doesn't lead to the matrix problem for spaces with torsion free homologies but to a matrix problem which can be solved. By this we are able to classify homotopy types of the full subcategory  $\mathbf{F}_{n(2)}^4$ of $\mathbf{A}_n^4(n\geq 5)$  consisting of polyhedra with 2-torsion free homology groups. We will discuss the splitting of smash product of $\mathbf{A}_n^k$-polyhedra in a latter publication.

 Section 2 contains some basic notations and facts about stable homotopy category and classification problem. Our main theorem is given at the end of this section. In Section 3, Theorem \ref{theorem3.2} and Corollary \ref{corollary3.3} establish a connection between bimodule categories and stable homotopy categories. In Section 4, we use the known results of indecomposable homotopy types of $\mathbf{F}_{n}^4$ in \cite{RefTF5cels} to classify the indecomposable isomorphic classes of another matrix problem $(\mathscr{A}^{0},\mathcal{G}^0)$ corresponding to $\mathbf{F}_{n}^4$. In Section 5, the matrix problem $(\mathscr{A}',\mathcal{G}')$ used to classify the indecomposable homotopy types of $\mathbf{F}_{n(2)}^4$ is given. In Section 6, we solve the matrix problem $(\mathscr{A}',\mathcal{G}')$  by the results of indecomposable isomorphic classes of matrix problem $(\mathscr{A}^{0},\mathcal{G}^0)$.

\section{Preliminaries}
\label{sec:2}

In this paper ``Polyhedron'' is used as ``finite CW-complex'' and  ``Space'' means a based space; we denote by $\ast_{X}$ (or by $\ast$ if there is no ambiguity) the based point of the space $X$.
Denote by $Hot(X,Y)$ the set of homotopy classes of continuous maps $X\rightarrow Y$ and by $\textsf{CW}$ the homotopy category of polyhedra. The suspension functor  $\Sigma : X\mapsto X[1] ~(X[n]=\Sigma^{n}X)$ defines a natural map
 $Hot(X,Y)\rightarrow Hot(X[n],Y[n])$. Set $Hos(X,Y)$  $=\lim\limits_{n\rightarrow{\infty}}$ $ Hot(X[n],Y[n])$.
 If $\alpha\in Hot(X[n],Y[n]),~\beta\in Hot(Y[m],Z[m])$, the class $\beta[n]\cdot \alpha[m]\in Hot(X[m+n],Z[m+n])$ after stabilization is, by definition, the product $\beta\alpha$ of the classes of $\alpha$ and $\beta$ in $Hos(X,Z)$.
 Thus we obtain the stable homotopy category of polyhedra $\textsf{CWS}$. Extending $\textsf{CWS}$ by adding formal negative shifts $X[-n] (n\in \mathds{N})$ of  polyhedra and setting $Hos(X[-n],Y[-m]):=Hos(X[m],Y[n])$, one gets the category $\textsf{S}$ of~\cite{RefCo}, which is a fully additive category, and we denote it by $\textsf{CWS}$ too.

  We will say a polyhedron $X$ $p$-torsion free if all homology groups of $X$ are $p$-torsion free, where $p$ is a prime.
  Denote by $\mathbf{A}_{n}^{k}$ the full subcategory of $\textsf{CW}$ consisting of $(n-1)$-connected and at most $(n+k)$-dimensional polyhedra, and  denote by $\mathbf{F}_{n}^k$ (resp. $\mathbf{F}_{n(2)}^{k}$) the full subcategory of $\mathbf{A}_{n}^k$ consisting of torsion free (resp. 2-torsion free) polyhedra.   The suspension gives a functor $\Sigma :\mathbf{F}_{n}^{k}\rightarrow \mathbf{F}_{n+1}^{k}$ (resp. $\Sigma :\mathbf{F}_{n(2)}^{k}\rightarrow \mathbf{F}_{n+1(2)}^{k}$).
  By the Freudenthal Theorem(\cite{RefRMS} Theorem.6.26), it follows that
\begin{proposition}\label{proposition2.1}
If $dimX\leq d $ and $Y$ is $(n-1)$-connected,where $d<2n-1$, then the map $Hot(X,Y)\rightarrow Hot(X[1],Y[1])$ is bijective. If $d=2n-1$, this map is surjective. In particular, the map $Hot(X[m],Y[m])$ $\rightarrow$ $Hos(X,Y)$ is bijective if $m>d-2n+1$ and surjective if $m=d-2n+1$.
\end{proposition}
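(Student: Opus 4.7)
The plan is to reduce Proposition 2.1 to the classical Freudenthal suspension theorem for spheres by inducting on the cells of $X$, then derive the ``In particular'' clause by iterating the suspension map.

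First, I would handle the base case where $X = S^q$ with $q \leq d$. Then the map $Hot(S^q, Y) \to Hot(S^{q+1}, \Sigma Y)$ is precisely the suspension homomorphism $\pi_q(Y) \to \pi_{q+1}(\Sigma Y)$. Since $Y$ is $(n-1)$-connected, the classical Freudenthal suspension theorem (e.g., Switzer's book cited as \cite{RefRMS}) says this is bijective for $q < 2n-1$ and surjective for $q = 2n-1$, which is exactly the claim when $d = q$ (the case $q < d$ being even easier).

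For general $X$, I would induct on the number of cells. Write $X = A \cup_\phi e^q$ with $q \leq d$ and $A$ a subcomplex with fewer cells, for which the statement holds by induction. The cofibration $A \hookrightarrow X$ gives a Puppe sequence
\begin{equation*}
S^{q-1} \xrightarrow{\phi} A \to X \to S^q \to \Sigma A \to \Sigma X \to \Sigma S^q \to \cdots,
\end{equation*}
and applying $Hot(-, Y)$ and $Hot(-, \Sigma Y)$ produces two long sequences compared by the suspension map. After one suspension all the terms appearing are genuine abelian groups (each source is itself a suspension), so the comparison is a map of exact sequences of abelian groups. In the range under consideration, the sphere terms $[S^j, Y] \to [S^{j+1}, \Sigma Y]$ with $j \leq q \leq d$ are bijective (respectively surjective) by the base case, and the $A$-terms are so by induction; the five-lemma (or four-lemma for the surjective-only case) then yields the desired conclusion for $X$.

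The main obstacle is bookkeeping: ensuring the five-lemma is applied to an honest exact sequence of abelian groups rather than merely a Puppe sequence of pointed sets. This is handled by suspending once so that every source in the relevant portion of the Puppe sequence is a suspension (hence the $\mathrm{Hot}$-sets are abelian groups), together with carefully tracking the bounds: an attaching sphere $S^{q-1}$ with $q \leq d$ still satisfies $q-1 < 2n-1$ (respectively $\leq 2n-1$) under the hypothesis on $d$, so that the Freudenthal input applies to every sphere term appearing in the diagram.

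Finally, for the ``In particular'' clause, I would iterate the first part. The $k$-th suspension map $Hot(X[k], Y[k]) \to Hot(X[k+1], Y[k+1])$ has source polyhedron of dimension $d + k$ and target $(n+k-1)$-connected, so by the first part it is bijective once $d + k < 2(n+k) - 1$, i.e., $k > d - 2n + 1$, and surjective when $k = d - 2n + 1$. Composing these maps and passing to the colimit $Hos(X, Y)$ yields the stated bijectivity for $m > d - 2n + 1$ and surjectivity for $m = d - 2n + 1$.
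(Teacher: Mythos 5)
The paper does not actually prove Proposition~\ref{proposition2.1}: it simply cites Switzer's Theorem~6.26 (the Freudenthal suspension theorem for CW-complexes), together with the evident iteration giving the ``In particular'' clause. Your proposal is a genuine re-derivation of Switzer's theorem from the sphere case, and the iteration argument for the ``In particular'' clause is correct. However, the cell-induction step has a real gap, and the fix you offer does not close it.

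The gap is the pointed-set issue you yourself flag. In the ladder obtained from the Puppe sequence of $A\hookrightarrow X$, the entries $Hot(X,Y)$ and $Hot(A,Y)$ are only pointed sets (neither $X$ nor $A$ is assumed to be a co-H-space and $Y$ is not an H-space), so the standard five-lemma for abelian groups does not apply. Your proposed remedy --- ``suspending once so that every source \ldots\ is a suspension'' --- does make every term a group, but it proves a \emph{different} statement: applying the five-lemma to the shifted ladder yields the bijectivity of $Hot(\Sigma X,Y)\to Hot(\Sigma^2 X,\Sigma Y)$, whereas the proposition asserts the bijectivity of $Hot(X,Y)\to Hot(\Sigma X,\Sigma Y)$, and this is genuinely needed (e.g.\ the ``In particular'' clause is invoked at $m=0$ when $d-2n+1<0$, which is exactly the regime relevant in this paper). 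Moreover, once you suspend the target complex, the dimension hypothesis $\dim \Sigma X < 2n-1$ is strictly stronger than the original $\dim X<2n-1$, so the shifted statement does not imply the original even formally. To make your route work you would instead have to keep $X$ unsuspended and use the refined exactness of the Puppe sequence --- namely, the coaction $X\to X\vee S^q$ makes $[S^q,Y]$ act on $[X,Y]$ with orbits equal to the fibers of $[X,Y]\to[A,Y]$ --- and then run a careful element chase; or, alternatively, follow the route the cited source actually takes: use the sphere Freudenthal theorem to show the adjunction $Y\to\Omega\Sigma Y$ is a $(2n-1)$-equivalence and then apply the general fact that an $m$-equivalence $f:Y\to Z$ induces a bijection $[X,Y]\to[X,Z]$ for $\dim X<m$ and a surjection for $\dim X=m$. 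As written, the appeal to ``the five-lemma for abelian groups'' does not go through.
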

From  Proposition~\ref{proposition2.1}, we get the following

\begin{proposition}\label{proposition2.2}
The suspension functor induces equivalences $\mathbf{A}^{k}_{n} \xlongrightarrow{\sim} \mathbf{A}^{k}_{n+1}$ for all $n > k+1$. Moreover, if $n= k+1$, the suspension functor
$\mathbf{A}^{k}_{n} \xlongrightarrow{} \mathbf{A}^{k}_{n+1}$ is a full representation equivalence, i.e. it is full, dense and reflects isomorphisms.
\end{proposition}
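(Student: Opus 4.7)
The plan is to derive everything from Proposition~\ref{proposition2.1}, applied both to morphism sets of $\mathbf{A}_n^k$ and to attaching maps in a cellular desuspension argument for density.

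First, for fullness and faithfulness of $\Sigma$ on morphisms: for $X,Y\in\mathbf{A}_n^k$ one has $\dim X\leq n+k$ and $Y$ is $(n-1)$-connected, so Proposition~\ref{proposition2.1} with $d=n+k$ gives that
\[\Sigma_{\ast}:Hot(X,Y)\longrightarrow Hot(\Sigma X,\Sigma Y)\]
is bijective when $n+k<2n-1$ (i.e.\ $n>k+1$) and surjective when $n+k=2n-1$ (i.e.\ $n=k+1$). The bijective range immediately yields that $\Sigma$ is fully faithful for $n>k+1$; in the boundary case it yields fullness.

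For density I would build a desuspension cell by cell. Any $Z\in\mathbf{A}_{n+1}^k$ admits a minimal CW structure with cells only in dimensions $n+1,\ldots,n+k+1$. Starting with $W^{(n)}=\bigvee S^n$, one has $\Sigma W^{(n)}\simeq Z^{(n+1)}$. Assume inductively that $\Sigma W^{(m-1)}\simeq Z^{(m)}$ for some $n+1\leq m\leq n+k$. For each attaching map $\alpha:S^m\to Z^{(m)}\simeq\Sigma W^{(m-1)}$ of an $(m+1)$-cell, Proposition~\ref{proposition2.1} applied to $S^{m-1}$ and $W^{(m-1)}$ (bijective, since $m-1\leq n+k-1\leq 2n-2<2n-1$ and $W^{(m-1)}$ is $(n-1)$-connected) produces a unique $\alpha':S^{m-1}\to W^{(m-1)}$ with $\Sigma\alpha'\simeq\alpha$. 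Attaching the corresponding $m$-cells to $W^{(m-1)}$ yields $W^{(m)}$ with $\Sigma W^{(m)}\simeq Z^{(m+1)}$, so after finitely many steps $W=W^{(n+k)}\in\mathbf{A}_n^k$ satisfies $\Sigma W\simeq Z$.

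For the reflection of isomorphisms in the case $n=k+1$: if $\Sigma f$ is a homotopy equivalence for some $f:X\to Y$ in $\mathbf{A}_n^k$, then $\Sigma f$ induces isomorphisms on all homology groups, and via the suspension isomorphism so does $f$. Since $k\geq 1$ gives $n\geq 2$, both $X$ and $Y$ are simply connected, and Whitehead's theorem forces $f$ itself to be a homotopy equivalence. The main subtlety lies in the density step, where one must verify that each partial desuspension $W^{(m-1)}$ is exactly $(n-1)$-connected and that the dimension bound $m\leq n+k$ stays strictly below the Freudenthal threshold $2n-1$; both facts reduce to the assumption $n\geq k+1$, so once the induction is set up the argument proceeds uniformly and everything else follows formally from Proposition~\ref{proposition2.1} and Whitehead's theorem.
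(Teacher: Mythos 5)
Your proof is correct and supplies exactly the argument the paper leaves implicit when it states that Proposition 2.2 follows "from Proposition 2.1": full faithfulness (fullness in the boundary case) on hom-sets directly from the Freudenthal bound applied with $d=n+k$, density via cell-by-cell desuspension with each attaching map below the threshold since $m-1\leq n+k-1\leq 2n-2$, and reflection of isomorphisms via the suspension isomorphism on homology together with Whitehead's theorem. All dimension counts check out under the running hypothesis $n\geq k+1$, so this is the standard reconstruction of the paper's intended proof.
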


 Note. If an additive  functor $F: \mathcal{C}\rightarrow \mathcal{D}$  is a full representation equivalence, denoted by $ \mathcal{C}\xlongrightarrow{F \simeq_{rep}}  \mathcal{D}$, then it induces an 1-1 correspondence of indecomposable isomorphic classes of objects of these two additive categories.

\begin{corollary}\label{corollary2.3}
 Functors $\Sigma :\mathbf{F}_{n}^{k}\rightarrow \mathbf{F}_{n+1}^{k}$ and $\Sigma :\mathbf{F}_{n(2)}^{k}\rightarrow \mathbf{F}_{n+1(2)}^{k}$ are equivalences of categories for $n\geq k+2$ and full representation equivalences for $n=k+1$.
\end{corollary}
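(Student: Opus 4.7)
The plan is to deduce the corollary directly from Proposition~\ref{proposition2.2} by observing that $\mathbf{F}_n^k$ and $\mathbf{F}_{n(2)}^k$ are full subcategories of $\mathbf{A}_n^k$ that are both preserved and detected by the suspension functor. Concretely, since $H_i(\Sigma X) \cong H_{i-1}(X)$ for $i\geq 1$, an object of $\mathbf{A}_n^k$ has torsion-free (resp.\ 2-torsion-free) homology if and only if its suspension does. This two-sided compatibility is the engine of both assertions: it is what allows properties established on $\mathbf{A}_n^k$ to restrict to the subcategories without losing density.

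For $n\geq k+2$, Proposition~\ref{proposition2.2} gives an equivalence $\Sigma:\mathbf{A}_n^k\to\mathbf{A}_{n+1}^k$. Fullness and faithfulness on the full subcategories $\mathbf{F}_n^k$ and $\mathbf{F}_{n(2)}^k$ are inherited automatically. For essential surjectivity onto $\mathbf{F}_{n+1}^k$ (resp.\ $\mathbf{F}_{n+1(2)}^k$), given $Z$ in the target subcategory, pick a preimage $Y\in\mathbf{A}_n^k$ with $\Sigma Y\simeq Z$; by the homology-shift observation, $Y$ has torsion-free (resp.\ 2-torsion-free) homology, so $Y$ already lies in the source subcategory.

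For $n=k+1$, Proposition~\ref{proposition2.2} provides fullness, density, and reflection of isomorphisms for $\Sigma$ on $\mathbf{A}_n^k$. Fullness restricts to the full subcategories for free. Density restricts by exactly the essential-surjectivity argument of the previous paragraph. Finally, if a morphism $f$ in $\mathbf{F}_n^k$ (resp.\ $\mathbf{F}_{n(2)}^k$) has $\Sigma f$ an isomorphism in the target, then $\Sigma f$ is in particular an isomorphism in $\mathbf{A}_{n+1}^k$, so $f$ is an isomorphism in $\mathbf{A}_n^k$; being a morphism between two objects of the full subcategory, it is equally an isomorphism there.

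The only step that is not purely formal is the transfer of density, since density is not automatically inherited by an arbitrary full subcategory; this is precisely where the homological characterization of $\mathbf{F}_n^k$ and $\mathbf{F}_{n(2)}^k$, combined with the homological shift property of $\Sigma$, intervenes. Beyond that single observation, everything reduces to unpacking the definition of a full representation equivalence and using that the subcategories are full, so I do not expect any serious obstacle.
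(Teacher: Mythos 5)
Your proof is correct and matches the (implicit) argument the paper has in mind: restrict Proposition~\ref{proposition2.2} to the full subcategories, with the only nontrivial point being that density transfers because the suspension isomorphism on homology shows a preimage in $\mathbf{A}_n^k$ of an object of $\mathbf{F}_{n+1}^k$ (resp.\ $\mathbf{F}_{n+1(2)}^k$) automatically lies in $\mathbf{F}_n^k$ (resp.\ $\mathbf{F}_{n(2)}^k$). The paper states the corollary without an explicit proof, so you have simply supplied the expected deduction; your identification of density as the one step requiring a homological input rather than pure formality is accurate.
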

Therefore $\mathbf{F}^{k}:= \mathbf{F}_{n}^{k}$ and $\mathbf{F}_{(2)}^{k}:= \mathbf{F}_{n(2)}^{k}$ with $n\geq k+2$ dose not depend on $n$.

 Let $\mathcal{C}$ be an additive category with zero object $\ast$ and biproducts $A\oplus B$ for any objects $A,B\in \mathcal{C}$, where $X\in \mathcal{C}$ means that $X$ is an object of $\mathcal{C}$. $X\in\mathcal{C} $ is decomposable if there is an isomorphism $X\cong A\oplus B$ where $A$ and $B$ are not isomorphic to $\ast$, otherwise $X$ is indecomposable. For example, $X\in \textsf{CW}$ (resp. $\textsf{CWS}$) is indecomposable if $X$ is homotopy equivalent (resp. stable homotopy equivalent ) to $X_{1} \vee X_{2} $ implies one of $X_{1}$ and $X_{2}$ is contractible. A decomposition of $X\in \mathcal{C}$ is an isomorphism
 $$X\cong A_{1}\oplus \cdots \oplus A_{n}, ~~n<\infty, $$
 where $A_{i}$ is indecomposable for $i\in \{1,2,\cdots , n\}$. The classification problem of category $\mathcal{C}$ is to find a complete list of indecomposable isomorphism types in $\mathcal{C}$ and describe the possible decompositions of objects in $\mathcal{C}$.

\begin{theorem}(\textbf{Main theorem})\label{maintheorem}
 The complete list of indecomposable (stable) homotopy types in $\mathbf{F}_{n(2)}^4$ $(n\geq 5)$ is given by the polyhedra in Theorem \ref{theorem6.3}; the Moore spaces $M(\mathbb{Z}/p^r, n)$,  $M(\mathbb{Z}/p^r, n+1)$,  $M(\mathbb{Z}/p^r, n+2)$, $M(\mathbb{Z}/p^r, n+3)$ where prime $p\neq 2$, $r\in \mathbb{N}_+$ and  $S^{n}$, $S^{n+1}$, $S^{n+2}$, $S^{n+3}$, $S^{n+4}$, $C_{\eta}=S^{n}\cup_{\eta}e^{n+2}$, where $\eta$ is the Hopf map.
\end{theorem}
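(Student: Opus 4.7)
My plan is to split the classification by primes and then attack the residual matrix problem. For any $X\in\mathbf{F}_{n(2)}^4$ with $n\geq 5$, I would first split $X$ stably according to its $p$-local pieces for odd primes $p$, and classify indecomposables in each $X_{(p)}$ separately. Since the homology is $2$-torsion free and the stable stems through degree $4$ have only $2$- and $3$-primary torsion, for primes $p\geq 5$ the $p$-local full subcategory of $\mathbf{F}_{n(2)}^4$ is ``semisimple'' in the relevant sense: every indecomposable summand of $X_{(p)}$ is either a sphere or a Moore space $M(\mathbb{Z}/p^r,m)$ with $n\leq m\leq n+3$, as was already exploited in the $2,3$-torsion free setting of \cite{PZ}. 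This accounts for all Moore-space summands in the statement with $p\geq 5$.

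The substantive content of the theorem concerns the remaining case, where the torsion of $X$ is $3$-primary (possibly together with free homology). Here I would apply the bimodule translation of Theorem~\ref{theorem3.2} and Corollary~\ref{corollary3.3} to replace the homotopy classification with the classification of indecomposable objects of the matrix problem $(\mathscr{A}',\mathcal{G}')$ set up in Section~5. Section~4 provides the classification of the simpler auxiliary matrix problem $(\mathscr{A}^0,\mathcal{G}^0)$ attached to the torsion-free subcategory $\mathbf{F}_n^4$, using the known list of \cite{RefTF5cels}; these $(\mathscr{A}^0,\mathcal{G}^0)$-canonical forms will serve as the ``torsion-free backbone'' for the reduction of $(\mathscr{A}',\mathcal{G}')$. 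Under the equivalence of Section~3, the torsion-free canonical forms translate back to $S^{n},\ldots,S^{n+4}$ and $C_\eta$, the ``pure $3$-torsion'' canonical forms translate to the Moore spaces $M(\mathbb{Z}/3^r,m)$, and the genuinely mixed canonical forms translate to the polyhedra of Theorem~\ref{theorem6.3}.

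The main obstacle will be solving $(\mathscr{A}',\mathcal{G}')$ itself: one must bring every matrix in $\mathscr{A}'$ to a canonical block-diagonal form under admissible operations in $\mathcal{G}'$, and show that distinct canonical blocks lie in distinct $\mathcal{G}'$-orbits. My approach would be an inductive block reduction: first normalize the torsion-free portion using the $(\mathscr{A}^0,\mathcal{G}^0)$-canonical forms of Section~4, then reduce the $3$-torsion blocks relative to that normalization by sequences of admissible row and column operations, exploiting both the explicit shape of $\mathcal{G}'$ and the fact that $2$-primary stable attaching maps (notably $\eta$ and $\eta^2$) act trivially on $3$-torsion summands. Pairwise inequivalence will be checked by tracking homological invariants that are preserved by $\mathcal{G}'$. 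Once the canonical forms are in hand, Corollary~\ref{corollary3.3} translates them back to polyhedra, yielding the complete list asserted in the main theorem.
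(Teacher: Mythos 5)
Your high-level architecture matches the paper's: reduce via Theorem~\ref{theorem3.2}/Corollary~\ref{corollary3.3} to the bimodule matrix problem $(\mathscr{A}',\mathcal{G}')$ (Section 5), split off the Moore-space summands at primes $p\geq 5$ and $q\neq2$ (Lemmas~\ref{lemma5.1}, \ref{lemma5.2}, Corollary~\ref{corollary 5.3}), and use the $(\mathscr{A}^0,\mathcal{G}^0)$ solution of Section~4 as a backbone. However, where you and the paper diverge is precisely at the hard step, the actual solution of $(\mathscr{A}',\mathcal{G}')$, and there your proposal as written has a genuine gap.

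You propose an ``inductive block reduction'': first normalize the torsion-free blocks using the $(\mathscr{A}^0,\mathcal{G}^0)$-canonical forms, then clean up the $3$-torsion blocks ``relative to that normalization.'' The trouble is that the entries of the $W^{S^{n+3}}$ column live in $\mathbb{Z}/24$ (respectively $\mathbb{Z}/12$ in the $C_\eta$-rows), and a single admissible row or column operation carries a \emph{single} integer coefficient $k$ whose residues $k\bmod 8$ and $k\bmod 3$ are not independent degrees of freedom: any operation you use to reduce the $3$-primary part simultaneously acts mod $8$, and can destroy a previously achieved $2$-primary normal form (and conversely). Saying that $\eta$ and $\eta^2$ act trivially on the $3$-torsion summands is true but does not resolve this, because the entanglement happens inside the mixed groups $\mathbb{Z}/24$, $\mathbb{Z}/12$ rather than between distinct summands.

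The paper handles exactly this point in Section~6.1: it factors $(\mathscr{A}',\mathcal{G}')$ through the $2$- and $3$-primary problems $(\mathscr{A}'_{(2)},\cdot)$ and $(\mathscr{A}'_{(3)},\cdot)$, but with \emph{restricted} admissible transformation groups $\mathcal{G}'^+$ (no $(-1\,i)$-type operations, swaps replaced by signed swaps), and then proves a lifting statement (Theorem~\ref{theorem6.1} via Lemma~\ref{lemma6.2}) that given separate $\mathcal{G}'^+$-isomorphisms of the $2$- and $3$-components, a \emph{single} integral transformation reducing to both mod~$8$ and mod~$3$ exists. Lemma~\ref{lemma6.2} rests on surjectivity of $SL_n(\mathbb{Z})\to SL_n(\mathbb{Z}/24)$ and the CRT-isomorphism $SL_n(\mathbb{Z}/24)\cong SL_n(\mathbb{Z}/8)\times SL_n(\mathbb{Z}/3)$, together with a careful case check for elementary matrices respecting the $\mathcal{V}$-block structure. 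That is the missing ingredient in your argument: you need such a lifting lemma (or an equivalent compatibility device) before ``normalize the $2$-part, then the $3$-part'' is legitimate, and it is not automatic. Your plan also does not explain how you would verify pairwise inequivalence across the $T_8$/$T_4$-glued canonical forms without the restricted group $\mathcal{G}'^+$, since under the full $\mathcal{G}'$ some of the separately listed $2$- and $3$-primary forms (e.g.\ the $\pm1$ pairs) coincide, and tracking this identification is part of what the $\mathcal{G}'^+$ bookkeeping buys.
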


\section{Techniques}
\label{sec:3}

\begin{definition}\label{definition3.1}
Let $\mathcal{A}$ and $\mathcal{B}$ be additive categories. $\mathcal{U}$ is an $\mathcal{A}$-$\mathcal{B}$-bimodule,
i.e. a biadditive functor  $\mathcal{A}^{op}\times \mathcal{B}\rightarrow \textbf{Ab}$, the category of abelian groups.
We define the bimodule category $El(\mathcal{U})$ as follows:
\begin{itemize}
  \item  the set of objects is the disjoint union $\bigcup_{A\in \mathcal{A} ,B\in \mathcal{B}}\mathcal{U}(A,B)$.
  \item A morphism $\alpha\rightarrow\beta$, where $\alpha \in \mathcal{U}(A,B), \beta \in \mathcal{U}(A',B')$ is a pair of morphisms $f:A\rightarrow A' $,
  $g:B\rightarrow B'$ such that $g\alpha=\beta f\in \mathcal{U}(A,B')$ (We write $g\alpha$ instead of $\mathcal{U}(1,g)\alpha$ and $\beta f$ instead of $\mathcal{U}(f,1)\beta$).
\end{itemize}
\end{definition}
Obviously $El(\mathcal{U})$ is an  (full) additive category if so are  $\mathcal{A}$ and $\mathcal{B}$.
\\

Suppose $\mathcal{A}$ and $\mathcal{B}$ are two full subcategories of $\textsf{CW}$ (or $\textsf{CWS}$), then we denote by $\mathcal{A}\dag \mathcal{B}$  the full subcategory of $\textsf{CW}$ (or  $\textsf{CWS}$) consisting of cofibers of  maps  $f:A\rightarrow B$, where $A\in \mathcal{A}, B\in \mathcal{B}$. We also denote by $\mathcal{A}\dag_{m} \mathcal{B}$  the full subcategory of $\mathcal{A}\dag \mathcal{B}$  consisting of cofibers of $f:A\rightarrow B$ such that $H_{m}(f)=0$ and denote by $\Gamma(A,B)$ the subgroup of $Hos(A,B)$ consisting of maps $f:A\rightarrow B$ such that $H_{m}(f)=0$, where $A\in \mathcal{A}$, $B\in \mathcal{B}$.

\begin{theorem}\label{theorem3.2}
 Let $\mathcal{A}$ and $\mathcal{B}$ be two full subcategories of $\textsf{CWS}$, suppose that  $ Hos$ $ (B, A[1])=0$ for all $A\in\mathcal{A}, B\in\mathcal{B}$. Consider  $H: \mathcal{A}^{op}\times \mathcal{B}\rightarrow \textbf{Ab}$, i.e.  $(A,B)\mapsto Hos(A,B)$, as an $\mathcal{A}$- $\mathcal{B}$-bimodule. Denote by $\mathcal{I}$ the ideal of category $\mathcal{A}\dag \mathcal{B}$ consisting of morphisms which factor both through $\mathcal{B}$ and $\mathcal{A}[1]$, and by $\mathcal{J}$ the ideal of the category $El(H)$ consisting of morphisms $(\alpha, \beta):f\rightarrow f'$ such that $\beta$ factors through $f'$ and $\alpha$ factors through $f$. Then
 \begin{itemize}
 \item [(1)] the functor   $ C:  El(H)\rightarrow  \mathcal{A}\dag \mathcal{B}$ ($f\mapsto C_{f}$) induces an equivalence  $El(H)/\mathcal{J}\simeq \mathcal{A}\dag \mathcal{B}/\mathcal{I}$.
 \item [(2)]  moreover $\mathcal{I}^{2}=0$, hence the projection $\mathcal{A}\dag\mathcal{B}\rightarrow \mathcal{A}\dag \mathcal{B}/\mathcal{I}$ is a representation equivalence.
 \item [(3)] In particular, let $n<m\leq n+k$ and denote by $\mathcal{\widetilde{B}}$ the full subcategory of $\mathbf{F}_{n(2)}^{k}(n\geq k+1)$ consisting of all $(n-1)$-connected polyhedra of dimension at most $m$ and by $\mathcal{\widetilde{A}}$ the full subcategory of $\mathbf{F}_{n(2)}^{k}(n\geq k+1)$ consisting of all $(m-1)$-connected polyhedra of dimension at most $n+k-1$., then
      $$El(H)/\mathcal{J}\xlongrightarrow{\overline{C}~\simeq}\mathcal{\widetilde{A}}\dag \mathcal{\widetilde{B}}/\mathcal{I}\xlongleftarrow{P\simeq_{rep}}\mathcal{\widetilde{A}}\dag \mathcal{\widetilde{B}}.$$
      gives a natural one-to-one correspondence between isomorphic classes of objects of $El(H)/\mathcal{J}$ and $\mathcal{\widetilde{A}}\dag\mathcal{\widetilde{B}}$.
      $\mathbf{F}_{n(2)}^{k}$ is the full subcategory of $\mathcal{\widetilde{A}}\dag\mathcal{\widetilde{B}}$ consisting of 2-torsion free polyhedra.
 \end{itemize}
 \end{theorem}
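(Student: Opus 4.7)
My plan is to leverage the triangulated structure of $\textsf{CWS}$: every $f: A \to B$ fits in a cofiber triangle $A \xrightarrow{f} B \xrightarrow{i} C_f \xrightarrow{\partial} A[1]$, and I take $C(f) = C_f$ on objects. On morphisms $(\alpha, \beta): f \to f'$ in $El(H)$, the relation $\beta f = f'\alpha$ combined with axiom (TR3) produces a third arrow $C(\alpha,\beta): C_f \to C_{f'}$ completing the morphism of triangles. This lift is non-canonical, but any two choices differ by a map factoring through $B' \in \mathcal{B}$, hence by an element of $\mathcal{I}$, so $\overline{C}: El(H)/\mathcal{J} \to \mathcal{A}\dag\mathcal{B}/\mathcal{I}$ is well defined.

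To show $\overline{C}$ is an equivalence (part (1)), essential surjectivity is immediate from the definition of $\mathcal{A}\dag\mathcal{B}$. For fullness, given $\phi: C_f \to C_{f'}$, the composite $B \xrightarrow{i} C_f \xrightarrow{\phi} C_{f'} \xrightarrow{\partial'} A'[1]$ lies in $Hos(B, A'[1]) = 0$ and therefore vanishes; applying (TR3) to the rotated triangles then extracts $\alpha: A \to A'$ and $\beta: B \to B'$ fitting $\phi$ into a morphism of triangles, so $\phi \equiv C(\alpha,\beta) \pmod{\mathcal{I}}$. For faithfulness, assume $C(\alpha,\beta) \in \mathcal{I}$. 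The factorization through some $B_0 \in \mathcal{B}$ together with $Hos(B_0, A'[1]) = 0$ forces $\partial' \circ C(\alpha,\beta) = 0$; by commutativity of the triangle morphism this equals $\alpha[1] \circ \partial$, and the long exact sequence for $Hos(-, A'[1])$ applied to the source triangle yields $\alpha = \gamma f$ for some $\gamma: B \to A'$. Dually, the factorization through some $A_0[1]$ and $Hos(B, A_0[1]) = 0$ give $C(\alpha,\beta) \circ i = 0$, hence $i'\beta = 0$, and the long exact sequence for $Hos(B,-)$ produces $\delta: B \to A'$ with $\beta = f'\delta$. Thus $(\alpha,\beta) \in \mathcal{J}$.

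For part (2), take composable $\phi_1, \phi_2 \in \mathcal{I}$ and factor $\phi_1 = u \circ v$ through some $B_0 \in \mathcal{B}$ and $\phi_2 = u' \circ v'$ through some $A_0[1]$; the middle composite $v' \circ u: B_0 \to A_0[1]$ lies in $Hos(B_0, A_0[1]) = 0$, so $\phi_2 \phi_1 = 0$, i.e.\ $\mathcal{I}^2 = 0$. The standard nilpotent-ideal argument then makes the projection a representation equivalence: if $\psi \phi \equiv 1 \pmod{\mathcal{I}}$ with $\psi\phi = 1 + \varepsilon$, $\varepsilon \in \mathcal{I}$, then $\varepsilon^2 = 0$ makes $1 + \varepsilon$ a unit with inverse $1 - \varepsilon$, whence $\phi$ is invertible and the projection reflects isomorphisms.

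For part (3), the only nontrivial check is $Hos(\widetilde{B}, \widetilde{A}[1]) = 0$: since $\widetilde{A}[1]$ is $m$-connected and $\dim \widetilde{B} \leq m$, all Atiyah--Hirzebruch obstructions in $H^i(\widetilde{B}; \pi_i^s(\widetilde{A}[1]))$ vanish. The composite equivalence then follows from (1) and (2). Realization of $X \in \mathbf{F}_{n(2)}^k$ as a cofiber in $\widetilde{\mathcal{A}} \dag \widetilde{\mathcal{B}}$ uses the cellular truncation $X^{(m)} \hookrightarrow X$: stably, $X$ is the cofiber of some $(X/X^{(m)})[-1] \to X^{(m)}$ with source in $\widetilde{\mathcal{A}}$ and target in $\widetilde{\mathcal{B}}$. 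The main obstacle in the whole argument is the faithfulness step in (1): the ideals $\mathcal{I}$ and $\mathcal{J}$ are designed precisely so that the two independent factorizations defining $\mathcal{I}$ correspond, via the morphism-of-triangles commutativity and two separate long exact sequences, to the two independent factorizations defining $\mathcal{J}$, and keeping this correspondence straight is where most of the care is needed.
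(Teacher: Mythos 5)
Your argument is correct in substance, but the relationship to the paper's proof is worth spelling out. For parts (1) and (2), the paper does not give a proof at all: it simply cites Theorem~1.1 of Drozd's paper (\cite{RefDrMTS}), where precisely the triangulated-category argument you reconstruct appears. What you have done is re-prove that cited result from the axioms (TR3), the long exact sequences of a triangle, and the hypothesis $Hos(\mathcal{B},\mathcal{A}[1])=0$. For part (3), your argument coincides with the paper's: both verify the connectivity/dimension condition guaranteeing $Hos(\widetilde{\mathcal{B}},\widetilde{\mathcal{A}}[1])=0$ and then realize $X\in\mathbf{F}_{n(2)}^{k}$ as the cofiber of $(X/X^{(m)})[-1]\to X^{(m)}$, using Freudenthal (Proposition~\ref{proposition2.2}) to desuspend $X/X^{(m)}$.

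A few spots in your reconstruction are terser than they should be, though nothing is actually broken. For well-definedness of $\overline{C}$ you only observe that two choices of lift differ by a map factoring through $B'\in\mathcal{B}$; you also need the symmetric statement that the difference factors through $A[1]$ (it does, by $(\phi_1-\phi_2)i=0$), since membership in $\mathcal{I}$ requires factoring through both $\mathcal{B}$ and $\mathcal{A}[1]$. You should also record that $(\alpha,\beta)\in\mathcal{J}$ forces $C(\alpha,\beta)\in\mathcal{I}$ (immediate from $i'f'=0$ and $f[1]\partial=0$), so that $\overline{C}$ descends. In the representation-equivalence step, $\psi\phi=1+\varepsilon$ being a unit only gives a one-sided inverse of $\phi$; you should note that the projection is automatically full and dense and that the same unit trick applies on the other side, after which the two one-sided inverses of $\phi$ agree. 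These are routine patches and do not affect the soundness of the approach.
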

  \begin{proof}
  (1) and (2) of Theorem \ref{theorem3.2} follow directly from Theorem~1.1. of \cite{RefDrMTS}. It remains to show that $\mathbf{F}_{n(2)}^{k}$ is a full subcategory of $\mathcal{\widetilde{A}}\dag\mathcal{\widetilde{B}}$. For any $X\in \mathbf{F}_{n(2)}^{k}$, let $B=X^{n+2}$ be the $(n+2)$-skeleton of $X$. We get a cofiber sequence $B\rightarrow X\rightarrow X/B$. Since $X/B\simeq A[1]$ for some $A$ by Proposition  \ref{proposition2.2},
there is a cofiber sequence $A\xrightarrow {f} B\rightarrow X\rightarrow X/B$, i.e. $X\simeq C_{f}$. By the homology exact sequence of cofiber sequence, it is easy to know that $A\in \mathcal{\widetilde{A}}, B\in \mathcal{\widetilde{B}}$.
  \end{proof}

The following corollary follows from the Corollary 1.2 of \cite{RefDrMTS}.

\begin{corollary}\label{corollary3.3}
Under conditions of Theorem \ref{theorem3.2}, let $H_{0}$ be an $\mathcal{A}$-$\mathcal{B}$-subbimodule of $H$ such that $f_1af_2=0$ whenever $a\in El(H_{0})$, $f_{i}\in Hos(B_{i},A_{i}) (i=1.2)$. Denote by $\mathcal{A}\dag_{H_{0}}\mathcal{B}$ the full subcategory of $\mathcal{A}\dag\mathcal{B}$ consisting of cofibers of $a\in El(H_{0})$.
$\mathcal{I}_{H_{0}}=Mor(\mathcal{A}\dag_{H_{0}}\mathcal{B})\cap\mathcal{I}$ and $\mathcal{J}_{H_{0}}=Mor(El(H_{0}))\cap\mathcal{J}$. Then we have

 \begin{itemize}
   \item [(1)] $\mathcal{J}^2_{H_0}=\mathcal{I}_{H_{0}}^2=0$;
   \item [(2)] $C: El(H_{0})\xlongrightarrow{P \simeq_{rep}}El(H_{0})/\mathcal{J}_{H_{0}}\xlongrightarrow{\overline{C}~\simeq}\mathcal{A}\dag_{H_{0}} \mathcal{B}/\mathcal{I}_{H_{0}}\xlongleftarrow{P \simeq_{rep}}\mathcal{A}\dag_{H_{0}} \mathcal{B}.$
 \end{itemize}
\end{corollary}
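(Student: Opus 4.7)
The plan is to mirror the structure of the proof of Theorem \ref{theorem3.2}, restricted to the subcategories $El(H_{0})\subseteq El(H)$ and $\mathcal{A}\dag_{H_{0}}\mathcal{B}\subseteq\mathcal{A}\dag\mathcal{B}$, while invoking the new hypothesis $f_{1}af_{2}=0$ precisely at the one point where the bimodule structure of $H_{0}$ meets the ideal $\mathcal{J}_{H_{0}}$. I would first establish the two square-zero statements in (1), and then deduce the chain of representation equivalences in (2) from the general principle that a square-zero ideal in an additive category is invisible to the classification of indecomposables (idempotents lift modulo a nilpotent ideal, so every decomposition downstairs is induced by one upstairs).

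For (1), the vanishing $\mathcal{I}_{H_{0}}^{2}=0$ is immediate from $\mathcal{I}_{H_{0}}\subseteq\mathcal{I}$ combined with Theorem \ref{theorem3.2}(2). The substantive step is $\mathcal{J}_{H_{0}}^{2}=0$: given two composable morphisms $(\alpha_{i},\beta_{i}):f_{i}\to f_{i+1}$ in $\mathcal{J}_{H_{0}}$ with $f_{i}\in El(H_{0})$, I would unpack the definition of $\mathcal{J}$ and write $\alpha_{i}=\gamma_{i}f_{i}$ and $\beta_{i}=f_{i+1}\delta_{i}$ for suitable $\gamma_{i},\delta_{i}\in Hos(B_{i},A_{i+1})$. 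The two components of the composite $(\alpha_{2}\alpha_{1},\beta_{2}\beta_{1})$ then read $\gamma_{2}f_{2}\gamma_{1}f_{1}$ and $f_{3}\delta_{2}f_{2}\delta_{1}$, each of which contains an inner block $\gamma_{2}f_{2}\gamma_{1}$ or $\delta_{2}f_{2}\delta_{1}$ of the shape $f'af''$ with $a=f_{2}\in El(H_{0})$ and $f',f''\in Hos(B,A)$. The standing hypothesis kills both blocks simultaneously, so the composite is zero.

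For (2), the two outer projections $P$ are representation equivalences by the principle noted above. To produce the middle equivalence $\overline{C}:El(H_{0})/\mathcal{J}_{H_{0}}\to\mathcal{A}\dag_{H_{0}}\mathcal{B}/\mathcal{I}_{H_{0}}$ I would restrict the equivalence supplied by Theorem \ref{theorem3.2}(1), after checking three compatibilities: $C$ carries $El(H_{0})$ essentially surjectively onto $\mathcal{A}\dag_{H_{0}}\mathcal{B}$ (immediate from the defining property of the latter as cofibers of $a\in El(H_{0})$), and $\mathcal{J},\mathcal{I}$ restrict precisely to $\mathcal{J}_{H_{0}},\mathcal{I}_{H_{0}}$ on these subcategories (also immediate). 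Fullness and faithfulness of $\overline{C}$ between the quotients then descend directly from the full version. The hard part throughout is the bookkeeping for $\mathcal{J}_{H_{0}}^{2}=0$: the definition of $\mathcal{J}$ provides two independent witnesses $\gamma_{i},\delta_{i}$ for the two factorization conditions, and the delicate step is to recognise both inner products $\gamma_{2}f_{2}\gamma_{1}$ and $\delta_{2}f_{2}\delta_{1}$ simultaneously as instances of the pattern $f'af''$ to which the hypothesis applies. Once this step is in place, the rest of the argument is a formal transfer of Theorem \ref{theorem3.2} to the restricted setting.
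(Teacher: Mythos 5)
Your proof is correct, and it supplies a complete argument where the paper gives none: the paper simply states that the corollary follows from Corollary~1.2 of Drozd's paper [Matrix problems, triangulated categories and stable homotopy types], without reproducing the argument. Your deduction of $\mathcal{I}_{H_0}^2=0$ from $\mathcal{I}_{H_0}\subseteq\mathcal{I}$ together with Theorem~\ref{theorem3.2}(2) is immediate and correct, and the key computation for $\mathcal{J}_{H_0}^2=0$ is carried out properly: unpacking the two independent factorizations $\alpha_i=\gamma_i f_i$ and $\beta_i=f_{i+1}\delta_i$ supplied by the definition of $\mathcal{J}$, the composite's two components $\gamma_2 f_2\gamma_1 f_1$ and $f_3\delta_2 f_2\delta_1$ each contain an inner block of the form $Hos(B,A)\circ H_0\circ Hos(B',A')$, which vanishes by the standing hypothesis on $H_0$. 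For part (2), the two projections $P$ are representation equivalences precisely because the respective ideals are square-zero (so isomorphisms reflect along the quotient functor, and the Note after Proposition~\ref{proposition2.2} gives the bijection on indecomposable isomorphism classes), and the middle functor $\overline{C}$ is obtained by restricting the equivalence from Theorem~\ref{theorem3.2}(1); the compatibilities you check (that $El(H_0)$ and $\mathcal{A}\dag_{H_0}\mathcal{B}$ are full subcategories and that $\mathcal{J}_{H_0}$, $\mathcal{I}_{H_0}$ are exactly the traces of $\mathcal{J}$, $\mathcal{I}$ on them) are exactly what is needed for the restriction to remain fully faithful and essentially surjective. In short, your argument is the expected proof behind the citation, and there is no gap.
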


If $H_{0}=\Gamma:\mathcal{A}^{op}\times \mathcal{B}\rightarrow \textbf{Ab}$, then $\mathcal{A}\dag_{H_{0}}\mathcal{B}=\mathcal{A}\dag_{m} \mathcal{B}$.

\textbf{Matrix problem} ~ Let $\mathscr{A}$ be a set of matrices which is closed under finite  direct sums of matrices  and let $\mathcal{G}$ denote the set of admissible transformations on $\mathscr{A}$.
 We say $A\cong B$ in $\mathscr{A}$ if $A$ can be transformed to $B$ by admissible transformations, and we say $A$ is decomposable
 if $A\cong A_{1}\bigoplus A_{2}$ for nontrivial $A_{1}, A_{2}\in \mathscr{A}$. The block matrices $\left( \begin{array}{c} A_{1} \\ 0 \\ \end{array} \right)$ and
$ \left( \begin{array}{cc}  A_{1} &0 \\  \end{array}  \right)$ are also thought to be decomposable. The matrix problem $(\mathscr{A}, \mathcal{G})$,
 or simply $\mathscr{A}$, means to classify the indecomposable isomorphic classes of $\mathscr{A}$  (denoted by $ind\mathscr{A}$) under admissible transformations $\mathcal{G}$.
Matrix problem $(\mathscr{A}, \mathcal{G})$ is said to be equivalent to matrix problem $(\mathscr{A}', \mathcal{G}')$ if there is a bijective map
$\varphi: \mathscr{A} \rightarrow  \mathscr{A}' $ such that $ A\cong A' $ in $ \mathscr{A} $ if and only if $ \varphi(A)\cong \varphi ( A')$ in $\mathscr{A}' $
and $\varphi(A_{1}\bigoplus A_{2}) = \varphi(A_{1})\bigoplus \varphi(A_{2})$. It is clear that if two matrix problems are equivalent, then there is a  one-to-one correspondence between their indecomposable isomorphic classes.

\begin{definition}\label{definition3.4}
Let $\mathscr{A}$ be a set of some matrices, ``$\cdot$'' is a ``product'' of two matrices defined in $\mathscr{A}$ ( ``$\cdot$'' may be not the usual matrix product),  we say that $M\in \mathscr{A} $ is invertible in $\mathscr{A}$ if there is a matrix $N\in \mathscr{A}$ such that $M\cdot N=N\cdot M=I\in \mathscr{A}$, where $I$ is the identity matrix.
\end{definition}

In the following context, for a matrix problem $(\mathscr{A}, \mathcal{G})$, saying a matrix $M\in \mathscr{A}$ invertible always means that $M$  is invertible in $\mathscr{A}$.

\section{The solution of a new matrix problem of the category $\mathbf{F}_{n}^4~(n\geq 5)$ }
\label{sec:4}

In the following context, the tabulations $\begin{tabular}{|c|c|c|}  \hline * &*  &*  \\ \hline * & * & *\\ \hline  \end{tabular}$ represent the matrices or block matrices.  For any category $\mathcal{C}$, denote by $ind\mathcal{C}$ the set of indecomposable isomorphic classes of $\mathcal{C}$.

 $ind \mathbf{F}_{n}^4 $ is known in \cite{RefTF5cels} and Drozd got a matrix problem corresponding to $\mathbf{F}_{n}^4$ in \cite{RefDrMTS}. Here we need a new matrix problem $(\mathscr{A}^{0}, \mathcal{G}^{0})$ for the classification problem of $\mathbf{F}_{n}^4 $.

When $n\geq 5$, denote by $\mathcal{B}_{0}$ the full subcategory of $\mathbf{F}_{n}^{4} (n\geq 5)$ consisting of all $(n-1)$-connected polyhedra of dimension at most $n+2$ and by $\mathcal{A}_{0}$ the full subcategory of $\mathbf{F}_{n}^{4}(n\geq 5)$ consisting of all $(n+1)$-connected polyhedra of dimension at most $n+2$. Then $\mathbf{F}_{n}^{4}= \mathcal{A}_{0}\dag_{n+2} \mathcal{B}_{0}$. From \cite{RefChang} we know
 $$ind\mathcal{A}_0=\{S^{n+2},S^{n+3}\};~~~~
ind\mathcal{B}_0=\{S^{n},S^{n+1},S^{n+2},C_{\eta}=S^{n}\cup _{\eta}e^{n+2}\}.$$

   Now take $m=n+2$, we obtain the $\mathcal{A}_{0}$-$\mathcal{B}_{0}$ subbimodule $\Gamma$ of $H$ :
    $$\Gamma : \mathcal{A}_{0}^{op}\times \mathcal{B}_{0}\rightarrow \textbf{Ab} ~~(A,B)\mapsto \Gamma(A,B),$$
  where $\Gamma(A,B)$ is the subgroup of $Hos(A,B)$ defined on section \ref{sec:3}.
 Take $H_{0}=\Gamma$ in Corollary \ref{corollary3.3}, then $f_{1}af_{2}=0$ whenever $f_{i}\in Hos(B_{i},A_{i})$ $(i=1,2)$, $a\in \Gamma(A_{2},B_{1})$, $A_{i}\in\mathcal{A}_{0}$ and $B_{i}\in \mathcal{B}_{0}$. Hence by Corollary \ref{corollary3.3}, we have
 $$C: El(\Gamma)\xlongrightarrow{P \simeq_{rep}}El(\Gamma)/\mathcal{J}_{\Gamma}\xlongrightarrow{\overline{C}~\simeq}\mathcal{A}_{0}\dag_{n+2} \mathcal{B}_{0}/\mathcal{I}_{\Gamma}\xlongleftarrow{P \simeq_{rep}}\mathcal{A}_{0}\dag_{n+2}\mathcal{B}_0.$$

     Objects of $El(\Gamma)$ can be represented by
    $5\times 2$ block matrices $(\gamma_{ij})$, where block $\gamma_{ij}$ has entries from the $(ij)$-th cell of $Table~ 1$. Morphisms $\gamma\rightarrow \gamma'$
     are given by block matrices $\alpha=(\alpha_{ij})_{2\times 2}$, $\beta=(\beta_{ij})_{5\times 5}$, $\alpha_{ij}$ has entries from the $(ij)$-th cell of $Table~2$ and $\beta_{ij}$  has entries from the $(ij)$-th cell of $Table~3$. Their sizes are compatible with those of $\gamma_{ij}$ and  $\gamma_{ij}'$ and $\beta\gamma=\gamma'\alpha$.
     Such a morphism is invertible if and only if $\alpha$ and $\beta$ are invertible in $Hos(\hua{A},\hua{A})$ and $Hos(\hua{B},\hua{B})$ respectively. And it is equivalent to say that all diagonal blocks of $\alpha$ and $\beta$ are square, and both $det(\alpha)$, $det(\beta)$ equal to $\pm 1$.
     Since only entries from $\mathbb{Z}$ and $2\mathbb{Z}$ give nonzero input to the determinants, they belong indeed to $\mathbb{Z}$.
We get the corresponding matrix problem of $El(\Gamma)$ which denoted by $(\mathscr{A}^{0}, \mathcal{G}^{0})$.

$$\begin{tabular}{|c|c|c|}
\multicolumn{3}{c}{$\Gamma(\hua{A},\hua{B})$}\\
 \multicolumn{3}{c}{}\\
\hline
\diagbox{\hua{B}}{\hua{A}} & $S^{n+2}$ & $S^{n+3}$  \\
\hline
 $S^{n}$ & $\mathbb{Z}/2$ &$\mathbb{Z}/24$  \\
\hline
$S^{n+1}$ &$\mathbb{Z}/2$& $\mathbb{Z}/2$ \\
\hline
$S^{n+2}$ & 0& $\mathbb{Z}/2$\\
\hline
$C_{\eta}:n$ &0 & $\mathbb{Z}/12$ \\
\qquad n+2 &0  & 0   \\
\hline
\end{tabular}$$
$$Table~1$$

$$\begin{tabular}{|c|c|c|}
 \multicolumn{3}{c}{$Hos(\hua{A},\hua{A})$}\\
 \multicolumn{3}{c}{}\\
\hline
\diagbox{\hua{A}}{\hua{A}}& $S^{n+2}$ & $S^{n+3}$  \\
\hline
 $S^{n+2}$ & $\mathbb{Z}$ &$\mathbb{Z}/2$  \\
\hline
$S^{n+3}$ &0& $\mathbb{Z}$ \\
\hline
\end{tabular}$$
$$Table~2$$

$$\begin{tabular}{|c|c|c|c|cc|}
 \multicolumn{6}{c}{$Hos(\hua{B},\hua{B})$}\\
 \multicolumn{6}{c}{}\\
\hline
\diagbox{\hua{B}}{\hua{B}}& $S^{n}$ & $S^{n+1}$ &$S^{n+2}$ &$C_{\eta}:n$ & n+2 \\
\hline
 $S^{n}$ & $\mathbb{Z}$ &$\mathbb{Z}/2$ &$\mathbb{Z}/2$ &$2\mathbb{Z} $&0\\
\hline
$S^{n+1}$ &0& $\mathbb{Z}$ &$\mathbb{Z}/2$ & 0 &0\\
\hline
$S^{n+2}$ &0& 0 &$\mathbb{Z}$ & 0 &$\mathbb{Z}$\\
\hline
$C_{\eta}:n$ &$\mathbb{Z}$ &0&$\mathbb{Z}/2^=$& $\mathbb{Z}^{=}$&0\\
\quad n+2 &0 & 0 & $2\mathbb{Z}^=$ & 0 & $\mathbb{Z}^{=}$ \\
\hline
\end{tabular}$$
$$Table~3$$
In $Table~3$, $Hos(C_{\eta},C_{\eta})$ is identified with the ring

$$\left(
\begin{array}{cc}
 \mathbb{Z}^{=}& 0 \\0 & \mathbb{Z}^{=}\\
 \end{array} \right)
 = \left\{ \left. \left( \begin{array}{cc}
                                  a & 0 \\
                                  0 & b \\
                                \end{array}
           \right) \right |a\equiv b~(mod 2) \right\} ;$$
$Hos(S^{n+2},C_{\eta})$ is identified with the subgroup $\left(
\begin{array}{c}
\mathbb{Z}/2^{=}\\
 2\mathbb{Z}^{=} \\
 \end{array}
\right)$  of
   $$ \left(
                              \begin{array}{c}
                                \mathbb{Z}/2\\
                                 2\mathbb{Z} \\
                                  \end{array}
                                    \right) = \left\{ \left. \left( \begin{array}{c}
                                  \varepsilon  \\
                                  2a \\
                                \end{array}
                              \right) \right | \varepsilon\in   \mathbb{Z}/2 , a\in \mathbb{Z} \right\}$$
 which is the image of  the following injective map
 $$Hos(S^{n+2},C_{\eta})\xrightarrow {\mathcal{F}}\left(
 \begin{array}{c}
  \mathbb{Z}/2\\
  2\mathbb{Z} \\
  \end{array}
  \right)   ~~~~f\mapsto \left( \begin{array}{c}
                                  \varepsilon  \\
                                  2a \\
                                \end{array}
                              \right).$$
  For any $f\in Hos(S^{n+2},C_{\eta})$, let $S^{n+1}\xrightarrow {\eta}S^{n}\xrightarrow {i}C_{\eta}\xrightarrow {q}S^{n+2}$ be the cofiber sequence  and $S^{n+3}$$\xrightarrow {\eta_{n+2}}$ $S^{n+2}$ be the suspension of $\eta$. Then  $qf=2a\iota_{n+2}\in Hos(S^{n+2},S^{n+2})\cong \mathbb{Z}$  for some $a\in \mathbb{Z}$, where $\iota_{n+2}:$ $S^{n+2}$ $\rightarrow$ $S^{n+2}$ is the identity map. Let $\varepsilon=\left\{
                                      \begin{array}{ll}
                                        1, & ~~~\hbox{if $f\eta_{n+2}\neq 0$} \\
                                        0, & ~~~\hbox{if $f\eta_{n+2}= 0$}
                                      \end{array}
                                    \right.$,
$\mathcal{F}$ is defined by mapping $f$ to $\left(
                                              \begin{array}{c}
                                                \varepsilon \\
                                                2a\\
                                              \end{array}
                                            \right)$.

In order to make the product of matrices in $Table~3$ compatible with the composition of the corresponding  maps, special rule for the matrix product in $Table~3$ is needed:
\begin{itemize}
 \item [(1)] For $\left(
                  \begin{array}{cc}
                    2a & 0 \\
                  \end{array}
                \right)$ and
 $\left(
  \begin{array}{c}
    1 \\
    2b \\
  \end{array}
  \right)$ respectively in
 \begin{footnotesize}$\begin{tabular}{c|cc|}
   \multicolumn{1}{c}{}&\multicolumn{2}{c} {$C_{\eta}:\s{n , n+2}$}\\
       \cline{2-3}
     $S^{n}$ & ~~$2\mathbb{Z}$& 0 \\
       \cline{2-3}
     \end{tabular}$\end{footnotesize}~ and ~
    \begin{footnotesize}$\begin{tabular}{c|c|}
   \multicolumn{1}{c}{}&\multicolumn{1}{c} {$S^{n+2}$}\\
       \cline{2-2}
      $C_{\eta}:\s{{n}}$ & $\mathbb{Z}/2^{=}$\\
      \quad ${\s{n+2}}$ &  $2\mathbb{Z}^{=}$ \\
        \cline{2-2}
      \end{tabular}$\end{footnotesize} ,
      \newline
   $\left(\begin{array}{cc}
                   2a & 0 \\
                  \end{array}  \right)\left(
  \begin{array}{c}
    1 \\
    2b \\
  \end{array}
  \right)$ $=\overline{a}$  in  $\begin{footnotesize}\begin{tabular}{c|c|}
   \multicolumn{1}{c}{}&\multicolumn{1}{c} {$S^{n+2}$}\\
       \cline{2-2}
     $S^{n}$ & $\mathbb{Z}/2$ \\
       \cline{2-2}
     \end{tabular}\end{footnotesize}$ , where  $\overline{a}$  is the image of $a$ under the quotient map $\mathbb{Z}\rightarrow \mathbb{Z}/2$.

  \item [(2)] For  $\left(
  \begin{array}{c}
    a \\
    0 \\
  \end{array}
  \right)$   and $\left(
                   \begin{array}{c}
                    \varepsilon \\
                   \end{array}
                 \right)$ respectively in  \begin{footnotesize}$\begin{tabular}{c|c|}
   \multicolumn{1}{c}{}&\multicolumn{1}{c} {$S^{n}$}\\
       \cline{2-2}
      $C_{\eta}:\s{{n}}$ & $\mathbb{Z}$\\
      \quad ${\s{n+2}}$ &  $0$ \\
        \cline{2-2}
      \end{tabular}$\end{footnotesize} ~and  $\begin{footnotesize}\begin{tabular}{c|c|}
   \multicolumn{1}{c}{}&\multicolumn{1}{c} {$S^{n+2}$}\\
       \cline{2-2}
     $S^{n}$ & $\mathbb{Z}/2$ \\
       \cline{2-2}
     \end{tabular}\end{footnotesize}$ ,  $\left(
  \begin{array}{c}
    a \\
    0 \\
  \end{array}
  \right)\left(
                   \begin{array}{c}
                    \varepsilon \\
                   \end{array}
                 \right)=\left(
  \begin{array}{c}
    0\\
    0 \\
  \end{array}
  \right)$ in  \begin{footnotesize}$\begin{tabular}{c|c|}
   \multicolumn{1}{c}{}&\multicolumn{1}{c} {$S^{n+2}$}\\
       \cline{2-2}
      $C_{\eta}:\s{{n}}$ & $\mathbb{Z}/2^{=}$\\
      \quad ${\s{n+2}}$ &  $2\mathbb{Z}^{=}$ \\
        \cline{2-2}
      \end{tabular}$\end{footnotesize} .

  \item [(3)]  Keep elements in zero blocks being zero. For example, for any
$\left(
 \begin{array}{c}
  a \\
 \end{array}
\right)$ and
$\left(
 \begin{array}{cc}
  0& b \\
 \end{array}
 \right)$ respectively in
$\begin{footnotesize}\begin{tabular}{c|c|}
  \multicolumn{1}{c}{}&\multicolumn{1}{c} {$S^{n+2}$}\\
  \cline{2-2}
  $S^{n}$ & $\mathbb{Z}/2$ \\
  \cline{2-2}
  \end{tabular}\end{footnotesize}$ and
\begin{footnotesize}$\begin{tabular}{c|cc|}
  \multicolumn{1}{c}{}&\multicolumn{2}{c} {$C_{\eta}:\s{n , n+2}$}\\
  \cline{2-3}
 $S^{n+2}$ & ~~0&$\mathbb{ Z}$\\
  \cline{2-3}
 \end{tabular}$\end{footnotesize}~,
$\left(
 \begin{array}{c}
 a \\
 \end{array}
 \right)$
$\left(
 \begin{array}{cc}
  0& b \\
\end{array}
\right)$ $=$ $\left(
        \begin{array}{cc}
          0& 0 \\
        \end{array}
      \right)$ in
 \newline
 \begin{footnotesize}$\begin{tabular}{c|cc|}
   \multicolumn{1}{c}{}&\multicolumn{2}{c} {$C_{\eta}:\s{n , n+2}$}\\
       \cline{2-3}
     $S^{n+2}$ &$\mathbb{ Z}$& 0\\
       \cline{2-3}
     \end{tabular}$\end{footnotesize}~, the second element is not $ab$ but 0.
  \end{itemize}
Denote by $W_{x}$ (respectively $W^{y}$) the $x$- horizontal (respectively $y$-vertical) stripe, where
 $x\in \{S^{n},S^{n+1}$, $S^{n+2}$, $C_{\eta}:{\s{n}}$, $C_{\eta}:{\s{n+2}}\}$, $y\in \{S^{n+2},S^{n+3}\}$, and denote by $W^{y}_{x}$ the block corresponding to $x$- horizontal stripe and $y$- vertical stripe. Let dim $W_{x}=$ the number of rows in $W_{x}$, dim $W^{y}=$ the number of columns in $W^{y}$.
$Table~1$ represents the matrix set $\mathscr{A}^{0}$. By right multiplication with invertible matrices in $Table~2$ and left multiplication with invertible matrices in $Table~3$, $Table~2$ and $Table~3$ provide admissible transformations $ \mathcal{G}^{0}$ (see \cite{RefDrMSP}) for  matrices in $Table 1$, i.e.
\begin{itemize}
  \item [(a)] ``elementary-row transformations'' of $W_{x}$ consisting of following three types:
   \begin{itemize}
    \item [](j+$a$i)-type : The replacement of the $j$-th row $\alpha_{j}$ of $W_{x}$ by $\alpha_{j}+a\alpha_{i}$, where $\alpha_{i}$ is the $i$-th row of $W_{x}$ , $a\in \mathbb{Z}$.
    \item  []($a$i)-type : The multiplication of the $i$-th row $\alpha_{i}$ of $W_{x}$ by $a\in \{\pm 1\}$.
    \item  [](i,j)-type : The transposition of the $i$-th and $j$-th row.
   \end{itemize}
   \item[(b)] ``elementary-column transformations'' of $W^{y}$ which also have three types as for elementary-row transformations;
   \item [](Restriction on (a) and (b)) If one performs a (j+$a$i)-type (respectively ($a$i)-type and (i,j)-type) elementary-row transformation of $W_{C_{\eta}:n}$,
   then one has to perform the (j+$a'$i)-type (respectively ($a'$i)-type and (i,j)-type) elementary-row transformation of  $W_{C_{\eta}:n+2}$ simultaneously
    where $a\equiv a'~(mod 2)$ and vice versa;
   \item[(c)] Adding $k$ times of a column of $W^{S^{n+2}}$ to a column of $W^{S^{n+3}}$;
   \item[(d)] Adding $k$ times of a row of $W_{S^{n+1}}$ or $W_{S^{n+2}}$ to a row of $W_{S^{n}}$;
   \item[(e)] Adding $k$ times of a row of $W_{S^{n+2}}$ to a row of $W_{S^{n+1}}$;
   \item[(f)] \begin{itemize}
                \item [(1)]  Adding $k$ times of a row of $W_{S^{n}}$ to a row of $W_{C_{\eta}:n}$;
                \item [(2)] Adding $2k$ times of of a row of $W_{C_{\eta}:n}$ to a row of $W_{S^{n}}$;
              \end{itemize}
   \item[(g)] Adding $6k$ times of a row of $W_{S^{n+2}}$ to a row of $W_{C_{\eta}:n}$;
   \end{itemize} where $k$ is an integer.

  \begin{remark}
    When admissible transformations above are performed on block matrix $\gamma=(\gamma_{ij})$, where block $\gamma_{ij}$ has entries from (ij)-cell of $table~ 1$,  we should note that
  \begin{itemize}
    \item [(1)]  If (ij)-cell of $table~ 1$ is zero, then $\gamma_{ij}$ keeps being zero after admissible transformations;
    \item [(2)]  Adding $1\in \mathbb{Z}/2$ to an element $a\in \mathbb{Z}/24$ gives $a+12$ in $\mathbb{Z}/24$, since $\eta^{3}$ is $12$ in $\mathbb{Z}/24 = Hos(S^{n+3},S^{n})$.
    \item [(3)] The reason for (g) is as follow: in the definition of the injective map $\mathcal{F}$ above, for any $f\in Hos(S^{n+2},C_{\eta})$, $f\eta = ix$ for some $x\in Hos(S^{n+3},S^{n})=\mathbb{Z}/24$. If $qf=2\iota_{n+2}\in Hos(S^{n+2},S^{n+2})$ then $x=6$ (Proposition~6 (iii) of \cite{RefUnsold}).
  \end{itemize}
  \end{remark}
From the known fact that
$$ind(\mathscr{A}^{0})\cong indEl(\Gamma)\cong ind(\mathcal{A}_{0}\dag_{n+2}\mathcal{B}_0)= ind\mathbf{F}_{n}^{4}.$$
 we have

\textbf{List (*)} :
\begin{itemize}
  \item [(I)] $X(\eta v\eta)=S^n\vee S^{n+2}\cup_{i_1\eta}e^{n+2}\cup_{i_{1}v+i_{2}\eta}e^{n+4}$  corresponds to
   $$\begin{tabular}{c|c|}
   \multicolumn{1}{c}{}&\multicolumn{1}{c} {$S^{n+3}$}\\
       \cline{2-2}
     $S^{n+2}$ & 1 \\
       \cline{2-2}
      $C_{\eta}:{\scriptstyle{n}}$ & $v$\\
      \quad ${\scriptstyle{n+2}}$ & 0 \\
        \cline{2-2}
     \end{tabular}
   $$  where $v\in \{1,2,3\}\subset\mathbb{Z}/12$.
  \item [(II)]
    \begin{itemize}
      \item [(1)] $X(\eta\eta v\eta\eta)=S^n\vee S^{n+1}\cup_{i_1\eta\eta}e^{n+3}\cup_{i_{1}v+i_{2}\eta\eta}e^{n+4}$ corresponds to  $$\begin{tabular}{c|c|c|}
   \multicolumn{1}{c}{}&\multicolumn{1}{c} {$S^{n+2}$}&\multicolumn{1}{c} {$S^{n+3}$}\\
       \cline{2-3}
     $S^{n}$ & 1& $v$ \\
        \cline{2-3}
      $S^{n+1}$ & 0& 1 \\
       \cline{2-3}
     \end{tabular}$$

     \item [(2)] $X(\eta\eta v\eta)=S^n\vee S^{n+2}\cup_{i_1\eta\eta}e^{n+3}\cup_{i_{1}v+i_{2}\eta}e^{n+4}$  corresponds to $$\begin{tabular}{c|c|c|}
   \multicolumn{1}{c}{}&\multicolumn{1}{c} {$S^{n+2}$}&\multicolumn{1}{c} {$S^{n+3}$}\\
       \cline{2-3}
     $S^{n}$ & 1& $v$ \\
        \cline{2-3}
      $S^{n+2}$ & 0& 1 \\
       \cline{2-3}
     \end{tabular}$$

       \item [(3)] $X(\eta v\eta\eta)= S^n\vee S^{n+1}\cup_{i_1\eta}e^{n+2}\cup_{i_{1}v+i_{2}\eta}e^{n+4}$ corresponds to $$\begin{tabular}{c|c|}
   \multicolumn{1}{c}{}&\multicolumn{1}{c} {$S^{n+3}$}\\
       \cline{2-2}
     $S^{n+1}$ & 1 \\
       \cline{2-2}
      $C_{\eta}:{\scriptstyle{n}}$ & $v$\\
      \quad ${\scriptstyle{n+2}}$ & 0 \\
        \cline{2-2}
     \end{tabular} $$

     \item [(4)] $X(\eta\eta v)=S^n\cup_{\eta\eta}e^{n+3}\cup_{v}e^{n+4}$ corresponds to
      $$\begin{tabular}{c|c|c|}
      \multicolumn{1}{c}{}&\multicolumn{1}{c} {$S^{n+2}$}&\multicolumn{1}{c} {$S^{n+3}$}\\
       \cline{2-3}
     $S^{n}$ & 1& $v$ \\
      \cline{2-3}
     \end{tabular}$$

     \item [(5)] $X( v\eta\eta)=S^n\vee S^{n+1}\cup_{i_{1}v+i_{2}\eta\eta}e^{n+4}$ corresponds to
     $$\begin{tabular}{c|c|}
   \multicolumn{1}{c}{}&\multicolumn{1}{c} {$S^{n+3}$}\\
       \cline{2-2}
     $S^{n}$ & $v$ \\
       \cline{2-2}
      $S^{n+1}$ & $1$\\
        \cline{2-2}
     \end{tabular} $$

      \item [(6)] $X(\eta v)=S^n\cup_{\eta}e^{n+2}\cup_{v}e^{n+4}$  corresponds to
      $$\begin{tabular}{c|c|}
   \multicolumn{1}{c}{}&\multicolumn{1}{c} {$S^{n+3}$}\\
       \cline{2-2}
   $C_{\eta}:{\scriptstyle{n}}$ & $v$\\
      \quad ${\scriptstyle{n+2}}$ & 0 \\
        \cline{2-2}
     \end{tabular} $$

       \item [(7)] $X(v\eta )=S^n\vee S^{n+2}\cup_{i_{1}v+i_{2}\eta}e^{n+4}$ corresponds to
      $$\begin{tabular}{c|c|}
   \multicolumn{1}{c}{}&\multicolumn{1}{c} {$S^{n+3}$}\\
       \cline{2-2}
     $S^{n}$ & $v$ \\
       \cline{2-2}
      $S^{n+2}$ & $1$\\
        \cline{2-2}
     \end{tabular} $$
    \end{itemize}
  where $v\in\{1,2,3,4,5,6\}\subset \mathbb{Z}/24 $  in the cases (1),(2),(4),(5),(7) of (II), and $v\in\{1,2,3,4,5,6\}$$\subset $$\mathbb{Z}/12 $ in the case (3),(6) of (II).
        \item [(III)] $X(v)=S^{n}\cup_{v}e^{n+4}$ corresponds to
         $$\begin{tabular}{c|c|}
   \multicolumn{1}{c}{}&\multicolumn{1}{c} {$S^{n+3}$}\\
       \cline{2-2}
     $S^{n}$ & $v$ \\
      \cline{2-2}
    \end{tabular} $$ where $v\in\{1,2,\cdots,12\}\subset \mathbb{Z}/24$.

         \item [(IV)]
         \begin{itemize}
          \item [(1)] $X(\eta_1)=S^{n+1}\cup_{\eta}e^{n+3}$ corresponds to
           $\begin{tabular}{c|c|}
   \multicolumn{1}{c}{}&\multicolumn{1}{c} {$S^{n+2}$}\\
       \cline{2-2}
     $S^{n+1}$ & $1$ \\
      \cline{2-2}
    \end{tabular}~; $

           \item [(2)] $X(\eta_2)=S^{n+2}\cup_{\eta}e^{n+4}$ corresponds to
           $\begin{tabular}{c|c|}
   \multicolumn{1}{c}{}&\multicolumn{1}{c} {$S^{n+3}$}\\
       \cline{2-2}
     $S^{n+2}$ & $1$ \\
      \cline{2-2}
    \end{tabular}~; $

           \item [(3)] $X(\eta\eta)_0 =S^{n}\cup_{\eta\eta}e^{n+3}$ corresponds to
                $\begin{tabular}{c|c|}
   \multicolumn{1}{c}{}&\multicolumn{1}{c} {$S^{n+2}$}\\
       \cline{2-2}
     $S^{n}$ & $1$ \\
      \cline{2-2}
    \end{tabular}~; $

            \item [(4)] $X(\eta\eta)_1 =S^{n+1}\cup_{\eta\eta}e^{n+4}$ corresponds to
                $\begin{tabular}{c|c|}
   \multicolumn{1}{c}{}&\multicolumn{1}{c} {$S^{n+3}$}\\
       \cline{2-2}
     $S^{n+1}$ & $1$ \\
      \cline{2-2}
    \end{tabular}~, $
         \end{itemize}
\end{itemize}
  For a wedge of spaces $X\vee Y$,  $i_{1}: X\hookrightarrow X\vee Y$ and  $i_{2}: X\hookrightarrow X\vee Y$  above are the canonical inclusions .

      \begin{remark}
  Indecomposable homotopy types in $\{A[1]~|~A\in ind\mathcal{A}_{0}\}$ and $ind \mathcal{B}_{0}$ of $\mathbf{F}_{n}^{4}$ are not contained in  $\textbf{List (*)}$. An element $A[1]$ of $\{A[1]~|~A\in ind\mathcal{A}_{0}\}$  (resp. $B$ of $ind \mathcal{B}_{0}$) can be considered as a mapping cone of map  $A\rightarrow \ast$ (resp. $\ast \rightarrow B$) in $\mathcal{A}_{0}\dag_{n+2}\mathcal{B}_0$ which corresponds to $0\times 1$ matrix (resp. $1\times 0$ matrix) in $\mathscr{A}^{0}$.  For a general matrix problem $(\mathscr{A}, \mathcal{G})$, these  $0\times 1$  and  $1\times 0$ matrices are regarded as elements in $ind\mathscr{A}$, but will not be listed to simplify notation.
      \end{remark}

\section{The reduction of the classification problem of $\mathbf{F}_{n(2)}^4$~$(n\geq 5)$}
\label{sec:5}

Let $M_{t}^{k}$ be the Moore space $M(\mathbb{Z}/t,k)$ ,  $t,k \in \mathbb{N}_+=\{1,2,\cdots,\}$.

  Take $m=n+2$ and two full subcategories
 $\widetilde{\mathcal{A}}$ and $\widetilde{\mathcal{B}}$ of $\mathbf{F}_{n(2)}^{4}$
 as in Theorem \ref{theorem3.2} (3).
 By the results of the indecomposable homotopy types of $\mathbf{A}_n^2~(n\geq3)$ in~\cite{RefChang},
we have
\\
\\
$ind\mathcal{\widetilde{A}}=\{S^{n+2},S^{n+3},M_{p^{r}}^{n+2}~|~ \text{ prime}~p\neq2, r\in \mathbb{N}_+\};$
\newline
$ind\mathcal{\widetilde{B}}=\{S^{n},S^{n+1},S^{n+2},C_{\eta}=S^{n}\cup_{\eta}e^{n+2} ,M_{p^{s}}^{n}, M_{p^{s}}^{n+1}~|~ \text{ prime}~p\neq2, s\in \mathbb{N}_+\}.$

\begin{lemma}\label{lemma5.1}
 \begin{itemize}
  \item []
  \item [] $Hos(M_{p^{r}}^{n+2}, B)=0$ ~for any $B\in ind\mathcal{\widetilde{B}}$, where prime $p\neq 2,3; r\in \mathbb{N}_+$;
   \item [] $Hos( A, M_{p^{s}}^{n} )=0$ ~for any $A\in ind\mathcal{\widetilde{A}}$, where prime $p\neq 2,3; s\in \mathbb{N}_+$;
    \item []  $Hos( A, M_{p^{s}}^{n+1} )=0$ ~for any $A\in ind\mathcal{\widetilde{A}}$, where prime $p\neq 2; s\in \mathbb{N}_+$.
  \end{itemize}
 \end{lemma}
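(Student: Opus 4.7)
The approach is to extract each hom-group from the Puppe long exact sequence of the Moore-space cofiber sequence, and then to check that, under the stated restriction on $p$, the adjacent sphere hom-groups (low stable stems) are $p$-coprime.

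For any $q\in\mathbb{N}_+$, the cofiber sequence $S^k \xrightarrow{q} S^k \to M_q^k \to S^{k+1} \xrightarrow{q} S^{k+1}$ yields, on applying $Hos(-,Y)$, the short exact sequence
\begin{equation*}
0 \to Hos(S^{k+1},Y)/q \to Hos(M_q^k,Y) \to \{\alpha \in Hos(S^k,Y) \mid q\alpha=0\} \to 0,
\end{equation*}
so each required vanishing reduces to checking that multiplication by $p^r$ (or $p^s$) is an automorphism of both adjacent sphere hom-groups. For part~(1), take $Y=B\in ind\mathcal{\widetilde{B}}$. If $B$ is a sphere, $Hos(S^{n+j},B)$ for $j=2,3$ is one of the low stable stems $\pi_0^s,\pi_1^s,\pi_2^s,\pi_3^s\cong \mathbb{Z},\mathbb{Z}/2,\mathbb{Z}/2,\mathbb{Z}/24$, whose torsion is supported only at the primes $2$ and $3$; hence for $p\neq 2,3$ the map $p^r$ is invertible, killing both cokernel and $p^r$-torsion. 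If $B=C_\eta$, I would feed the cofiber sequence $S^{n+1}\xrightarrow{\eta}S^n\to C_\eta\to S^{n+2}\xrightarrow{\eta}S^{n+1}$ through $Hos(S^{n+j},-)$, and use $\eta^2\neq 0$ together with $\eta^3=12\nu$ (the order-$2$ element of $\pi_3^s$) to obtain $Hos(S^{n+2},C_\eta)\cong\mathbb{Z}$ (generated by a lift of $2\in\pi_0^s$) and $Hos(S^{n+3},C_\eta)\cong\mathbb{Z}/12$; again $p\neq 2,3$ annihilates the relevant cokernel and torsion. When $B=M_{p'^s}^n$ or $M_{p'^s}^{n+1}$, a second application of the Puppe sequence (now to $B$) reduces everything to the sphere-target cases already handled.

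Parts (2) and (3) follow by a symmetric argument: when $A=M_{p''^r}^{n+2}$, apply the Puppe sequence to $A$ to reduce to $A=S^{n+2},S^{n+3}$, and then apply the Puppe sequence of the target $M_{p^s}^n$ or $M_{p^s}^{n+1}$ to compute $Hos(S^{n+j},M_{p^s}^n)$ and $Hos(S^{n+j},M_{p^s}^{n+1})$; these are extensions of quotient- or torsion-subgroups of the same low stems, which vanish under the hypothesis on $p$. The weaker hypothesis $p\neq 2$ suffices in (3) because $\pi_3^s=\mathbb{Z}/24$, the sole source of $3$-primary obstruction, only appears when the target has bottom cell in dimension $n$; when the bottom cell moves up to $n+1$, only $\pi_1^s$ and $\pi_2^s$ (pure $2$-torsion) enter the computation. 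The only nontrivial homotopy-theoretic input is the $C_\eta$ computation; once $\eta^2\neq 0$ and $\eta^3=12\nu$ are in hand, the rest is bookkeeping with finite abelian groups, and this rather than an obstacle is the step where care is needed.
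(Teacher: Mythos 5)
Your proof is correct and takes essentially the same route as the paper: the short exact sequence you derive from the Puppe sequence of the Moore-space cofibration is precisely the universal coefficient theorem for homotopy groups with coefficients that the paper invokes, and the vanishing then follows from the $p$-coprimality of the relevant low stable stems (and of $Hos(S^{n+2},C_\eta)\cong\mathbb{Z}$, $Hos(S^{n+3},C_\eta)\cong\mathbb{Z}/12$), exactly as you argue.
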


\begin{proof}
 It follows from the triviality of $p$-primary component of relevant homotopy groups of spheres and the universal coefficients theorem for homotopy groups with coefficients.
\end{proof}

For $C_{f}\in \mathcal{\widetilde{A}}\dag \mathcal{\widetilde{B}}$,
 where $f:A\rightarrow B$ ,$A=\vee A_{i}, A_{i}\in ind\mathcal{\widetilde{A}}$, $B=\vee B_{j}$, $B_{j}\in ind\mathcal{\widetilde{B}}$. If $A_{i}=M_{p^{r}}^{n+2}~(p\neq 2,3)$
  for some $i$, then  $A_{i}[1]$ split out of $C_{f}$. Similarly if $B_{j}=M_{p^{s}}^{n}~(p\neq 2,3)$ or $M_{p^{s}}^{n+1}~(p\neq 2)$ for some $j$, then this $B_{j}$ also split out of $C_{f}$. So we get the following
 \begin{lemma}\label{lemma5.2}
 Let $\mathcal{A}$ and $\mathcal{B}$ be the full subcategories of $\mathcal{\widetilde{A}}$ and $\mathcal{\widetilde{B}}$ respectively, such that
  \begin{itemize}
  \item [] $ind\mathcal{A}=\{S^{n+2},S^{n+3},M_{3^{r}}^{n+2}~|~ r\in \mathbb{N}_+\};$
   \item [] $ ind\mathcal{B}=\{S^{n},S^{n+1},S^{n+2},C_{\eta}=S^{n}\cup_{\eta}e^{n+2} ,M_{3^{s}}^{n} ~|~ s\in \mathbb{N}_+\}.$
  \end{itemize}
 then $$ind(\mathcal{\widetilde{A}}\dag\mathcal{\widetilde{B}})=ind(\mathcal{A}\dag\mathcal{B})\cup \{M_{p^{r}}^{n+3}, M_{p^{r}}^{n}, M_{q^{r}}^{n+1} ~| ~\text{primes}~p\neq2,3, q\neq2; r\in \mathbb{N}_+\}.$$
 \end{lemma}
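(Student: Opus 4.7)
The plan is to upgrade the informal argument given in the paragraph immediately preceding the lemma into a clean decomposition of any object of $\widetilde{\mathcal{A}}\dag\widetilde{\mathcal{B}}$. Starting from an arbitrary $X\in \widetilde{\mathcal{A}}\dag\widetilde{\mathcal{B}}$, I write $X\simeq C_f$ for some $f\colon A\to B$ with $A\in\widetilde{\mathcal{A}}$, $B\in\widetilde{\mathcal{B}}$, and I split both wedges according to the two subcategories in question: $A=A'\vee A''$ where $A'$ wedges the summands lying in $\mathcal{A}$ (so $A'$ is a wedge of $S^{n+2}$, $S^{n+3}$ and $M_{3^r}^{n+2}$) and $A''$ is a wedge of the ``forbidden'' Moore spaces $M_{p^r}^{n+2}$ with $p\neq 2,3$; similarly $B=B'\vee B''$ where $B''$ is a wedge of $M_{p^s}^{n}$ ($p\neq 2,3$) and $M_{q^s}^{n+1}$ ($q\neq 2$).

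Next, I use biadditivity of $Hos$ together with Lemma~\ref{lemma5.1} to show that, after writing $f$ as a $2\times 2$ block matrix relative to these decompositions, the two ``off-diagonal-with-$A''$-or-$B''$'' blocks vanish. Concretely, every component $M_{p^r}^{n+2}\to B_j$ is zero because $Hos(M_{p^r}^{n+2},B_j)=0$ for each indecomposable summand $B_j$ of $B$, and every component $A_i\to M_{p^s}^{n}$ or $A_i\to M_{q^s}^{n+1}$ is zero for the analogous reason. Hence $f$ takes the form
$$f = \begin{pmatrix} f' & 0 \\ 0 & 0 \end{pmatrix}\colon A'\vee A''\longrightarrow B'\vee B'',$$
for a map $f'\colon A'\to B'$ with $A'\in\mathcal{A}$, $B'\in\mathcal{B}$. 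Taking cofibers block-wise yields the (stable) splitting
$$C_f \;\simeq\; C_{f'} \;\vee\; A''[1] \;\vee\; B''.$$

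Finally, I invoke indecomposability of $X$. The summand $A''[1]$ is a wedge of $M_{p^r}^{n+3}$ ($p\neq 2,3$) and $B''$ is a wedge of $M_{p^s}^{n}$ ($p\neq 2,3$) and $M_{q^s}^{n+1}$ ($q\neq 2$), each of which is itself indecomposable (Moore spaces at a prime are indecomposable, cf.\ the $\mathbf{A}_n^2$ classification of \cite{RefChang}). Consequently the indecomposable $X$ is either a single such Moore space, or it equals $C_{f'}\in\mathcal{A}\dag\mathcal{B}$; the reverse inclusion is immediate since each Moore space on the right-hand side lies in $\widetilde{\mathcal{A}}\dag\widetilde{\mathcal{B}}$ as the cofiber of a trivial map (from or to $\ast$). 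This yields the asserted equality $ind(\widetilde{\mathcal{A}}\dag\widetilde{\mathcal{B}})=ind(\mathcal{A}\dag\mathcal{B})\cup\{M_{p^r}^{n+3},\,M_{p^r}^n,\,M_{q^r}^{n+1}\}$.

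There is no really hard step here: the content of the lemma is essentially a bookkeeping consequence of Lemma~\ref{lemma5.1}. The one point requiring some care is the passage from ``every hom between the relevant indecomposable summands vanishes'' to ``$f$ splits as a block matrix with zero off-diagonal blocks,'' which uses biadditivity of $Hos$ in both variables; after that, the splitting of the cofiber along a zero block is formal in the triangulated category $\textsf{CWS}$.
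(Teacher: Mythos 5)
Your proposal is correct and takes essentially the same approach as the paper: the informal paragraph preceding Lemma~\ref{lemma5.2} splits off each $A_i[1]$ with $A_i=M_{p^r}^{n+2}$ ($p\neq 2,3$) and each $B_j\in\{M_{p^s}^n,M_{q^s}^{n+1}\}$ from $C_f$ using the vanishing results of Lemma~\ref{lemma5.1}, which is exactly what you formalize via the block decomposition of $f$ and the resulting stable splitting $C_f\simeq C_{f'}\vee A''[1]\vee B''$.
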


 By Theorem \ref{theorem3.2} (3),
  \begin{corollary}\label{corollary 5.3}
$ind\mathbf{F}_{n(2)}^4=\{X\in ind(\mathcal{A}\dag\mathcal{B})$ $|~X$ is $2$-torsion free
$\}$ $\cup$ $\{~M^{n+3}_{p^r}, M^{n}_{p^r},$ $ M^{n+1}_{q^r}~|~$ prime $p\neq 2,3$, prime $q\neq 2$ and $r\in \mathbb{N}_+\}.$
\end{corollary}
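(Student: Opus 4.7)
The plan is to combine Theorem~\ref{theorem3.2}(3) with Lemma~\ref{lemma5.2}, and then impose the 2-torsion free condition on each of the two pieces that emerges. First, Theorem~\ref{theorem3.2}(3) identifies $\mathbf{F}_{n(2)}^{4}$ with the full subcategory of $\mathcal{\widetilde{A}}\dag\mathcal{\widetilde{B}}$ whose objects are 2-torsion free, and the equivalences in that theorem give a natural one-to-one correspondence between isomorphism classes. Since the homology of a wedge is the direct sum of the homologies of the summands, being 2-torsion free both descends to and is preserved by wedge decomposition; hence $X$ is indecomposable in $\mathbf{F}_{n(2)}^{4}$ if and only if $X$ is 2-torsion free and indecomposable in $\mathcal{\widetilde{A}}\dag\mathcal{\widetilde{B}}$. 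So the starting point is
$$ind\mathbf{F}_{n(2)}^{4}=\{\,X\in ind(\mathcal{\widetilde{A}}\dag\mathcal{\widetilde{B}})\mid X\text{ is $2$-torsion free}\,\}.$$

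Next, I would apply Lemma~\ref{lemma5.2} to split the ambient set of indecomposables as
$$ind(\mathcal{\widetilde{A}}\dag\mathcal{\widetilde{B}})=ind(\mathcal{A}\dag\mathcal{B})\cup\{M^{n+3}_{p^r},M^{n}_{p^r},M^{n+1}_{q^r}\mid p\neq 2,3,\ q\neq 2,\ r\in\mathbb{N}_+\},$$
and then intersect with the 2-torsion free condition summand by summand. The first summand contributes exactly $\{X\in ind(\mathcal{A}\dag\mathcal{B})\mid X\text{ is $2$-torsion free}\}$, which is the first half of the claimed list. For the second summand it suffices to observe that each Moore space $M^{n+3}_{p^r}$, $M^{n}_{p^r}$ (with $p\neq 2,3$) and $M^{n+1}_{q^r}$ (with $q\neq 2$) has only odd-primary torsion in its homology, so it is automatically 2-torsion free; therefore every space in this set survives the restriction and appears in $ind\mathbf{F}_{n(2)}^{4}$.

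The argument is essentially bookkeeping: the substantive work has already been carried out in Theorem~\ref{theorem3.2}(3) (which handles the reduction from $\mathbf{F}_{n(2)}^{4}$ to the bimodule/cofiber picture) and in Lemma~\ref{lemma5.2} (which peels off the odd-primary Moore summands using the splitting consequences of Lemma~\ref{lemma5.1}). There is no real obstacle beyond verifying the trivial point that odd-primary Moore spaces contain no 2-torsion, and noting that 2-torsion freeness is compatible with wedge decomposition so that indecomposability in $\mathbf{F}_{n(2)}^{4}$ and in $\mathcal{\widetilde{A}}\dag\mathcal{\widetilde{B}}$ coincide on 2-torsion free objects.
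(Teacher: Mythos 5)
Your proposal is correct and follows the same route the paper intends: Theorem~\ref{theorem3.2}(3) identifies $\mathbf{F}_{n(2)}^{4}$ with the 2-torsion free full subcategory of $\mathcal{\widetilde{A}}\dag\mathcal{\widetilde{B}}$, and Lemma~\ref{lemma5.2} splits $ind(\mathcal{\widetilde{A}}\dag\mathcal{\widetilde{B}})$ into $ind(\mathcal{A}\dag\mathcal{B})$ and the listed odd-primary Moore spaces, all of which are automatically 2-torsion free. Your extra remark that 2-torsion freeness is inherited by wedge summands (so indecomposability agrees on the subcategory and the ambient category) simply makes explicit the bookkeeping the paper leaves unstated.
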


  In order to get $ind\mathbf{F}_{n(2)}^4$, it suffices to compute $\{X\in ind(\mathcal{A}\dag\mathcal{B})$ $|~X$ is $2$-torsion free
$\}$.

 Let $\Gamma : \mathcal{A}^{op}\times\mathcal{B}\rightarrow \textbf{Ab}$, $\Gamma(A,B)=\{g\in Hos(A,B)~|~H_{n+2}(g)=0\}$, defined in section \ref{sec:3}, be a sub-bimodule of $\mathcal{A}$-$\mathcal{B}$-bimodule $H: \mathcal{A}^{op}\times\mathcal{B}\rightarrow \textbf{Ab}$, $H(A,B)=Hos(A,B)$.

 \begin{lemma}\label{lemma5.4}
  $$\begin{array}{rl}
   & \{X\in ind(\mathcal{A}\dag\mathcal{B})~|~X~\text{is}~2\text{-torsion free}
   \} \\
   =&\{M_{p^{r}}^{n+2}~|~ \text{prime}~ p\neq 2, r\in \mathbb{N}_{+}\}\cup \{~C(f)~\text{is indecomposable}~|~f\in El(\Gamma)~\}.
  \end{array}$$
 \end{lemma}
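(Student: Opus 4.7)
The plan is to establish both set inclusions separately. For the easy direction $\supseteq$, I verify each Moore space $M_{p^r}^{n+2}$ with odd $p$ is indecomposable and $2$-torsion-free (its only nontrivial homology is $\mathbb{Z}/p^r$ in dimension $n+2$), and belongs to $\mathcal{A}\dag\mathcal{B}$ as $C(p^r\colon S^{n+2}\to S^{n+2})$. For $f\in El(\Gamma)$ with $C_f$ indecomposable, I plug $H_{n+2}(f)=0$ into the long exact homology sequence of the cofibration $A\to B\to C_f$; using that $A$ is $(n+1)$-connected and $\dim B\leq n+2$, the sequence collapses to give $H_{n+2}(C_f)=H_{n+2}(B)$, $H_{n+3}(C_f)=H_{n+2}(A)\cong\mathbb{Z}^{a}\oplus\bigoplus_r(\mathbb{Z}/3^r)^{b_r}$, $H_{n+4}(C_f)=H_{n+3}(A)$, and $H_k(C_f)=H_k(B)$ in lower degrees, all of which are $2$-torsion-free.

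For the converse $\subseteq$, take $X=C_f\in ind(\mathcal{A}\dag\mathcal{B})$ which is $2$-torsion-free, and decompose $A=\bigvee_i A_i$, $B=\bigvee_j B_j$ into indecomposables. I first analyze $H_{n+2}(f)$: among the $B_j$, only $S^{n+2}$ and $C_\eta$ contribute nonzero $H_{n+2}$ (both equal to $\mathbb{Z}$), so $H_{n+2}(B)$ is free abelian; then $Hom(\mathbb{Z}/3^r,\mathbb{Z})=0$ shows the $M_{3^r}^{n+2}$-summands of $A$ map trivially, and $H_{n+2}(f)$ reduces to an integer matrix $M$ recording the $S^{n+2}$-to-$\{S^{n+2},C_\eta\}$ contributions. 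If $M=0$ then $f\in El(\Gamma)$ and $X$ sits in the second set on the right-hand side.

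Otherwise $M\neq 0$, and I claim $X\cong M_{p^r}^{n+2}$ for some odd prime $p$. The strategy is to apply admissible transformations to the matrix of $f$ (extending those of Section~4 to accommodate the $M_{3^r}^{n+2}$-rows and $M_{3^s}^n$-columns) to put the $(S^{n+2}_A,S^{n+2}_B)$-block into Smith normal form via integer row/column operations arising from automorphisms of $\vee S^{n+2}$, and to zero out the interactions of the extracted pivot $d$ with the other indecomposable summands of $A$ and $B$. This yields a wedge decomposition $X\cong M_d^{n+2}\vee X'$; indecomposability of $X$ forces $X'\simeq\ast$, the hypothesis of $2$-torsion-freeness forces $d$ to be odd, and the well-known splitting $M_{ab}^{n+2}\simeq M_a^{n+2}\vee M_b^{n+2}$ for coprime $a,b$ together with indecomposability forces $d=p^r$ a single odd prime power.

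The main obstacle is the reduction step: one must verify that Smith normal form of the $(S^{n+2}_A,S^{n+2}_B)$-block, together with the zeroing of all off-diagonal interactions with $S^{n+3}$- and $M_{3^r}^{n+2}$-summands of $A$ and with $S^n,S^{n+1},C_\eta,M_{3^s}^n$-summands of $B$, can actually be realized by admissible transformations within the matrix problem for $\mathcal{A}\dag\mathcal{B}$. Particular care is needed for the constraint $Hos(S^{n+2},C_\eta)=2\mathbb{Z}$ rather than all of $\mathbb{Z}$, which restricts how one may mix $S^{n+2}$- and $C_\eta$-summands of $B$ at the $H_{n+2}$-level; only once this technical reduction is carried out does the matrix decomposition lift to a genuine wedge splitting that, by indecomposability, collapses to a single Moore summand.
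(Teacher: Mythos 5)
Your overall strategy matches the paper's: split $H_{n+2}(f)$ into a "free pivot" piece that produces Moore spaces $M_d^{n+2}$ and a remainder $f'$ with $H_{n+2}(f')=0$. However, there are two points where your argument is imprecise or leaves genuine work undone, precisely where the paper instead invokes an external result.

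First, the dichotomy you set up is not quite right. You distinguish the cases "$M:=H_{n+2}(f)=0$" versus "$M\neq 0\Rightarrow X\cong M_{p^r}^{n+2}$". But $M\neq 0$ does not force $X$ to be a Moore space: one can represent the same stable homotopy type $X=C_f$ with a non-reduced $f$ (e.g. by adding an identity $S^{n+2}\to S^{n+2}$ summand), making $M$ nonzero yet with all Smith pivots equal to $1$, in which case the Moore summands extracted are contractible and $X\simeq C_{f'}$ for some $f'$ with $H_{n+2}(f')=0$. The correct dichotomy is governed by whether $\mathrm{coker}(H_{n+2}(f))$ has torsion (equivalently, whether some Smith pivot exceeds $1$), not by whether $M$ vanishes. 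In the paper's formulation this is automatic, because they prove the decomposition $X\simeq\bigvee_i M_{\alpha_i}^{n+2}\vee C_{f'}$ for an arbitrary $2$-torsion-free $X$ (not just indecomposable ones) and only then read off the indecomposables.

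Second, you correctly identify the main obstacle — realizing the Smith normal form reduction and the elimination of all cross-interactions by genuine self-homotopy equivalences of $A$ and $B$, respecting constraints like $H_{n+2}\colon Hos(S^{n+2},C_\eta)\to\mathbb{Z}$ having image $2\mathbb{Z}$ — but you do not close it. The paper's proof does two things here that your sketch omits. It first restricts attention to the torsion-free wedges $A(k,l)$, $B(t,u,v,w)$ and cites Theorem 5.5 of the authors' earlier paper [PZ] for the existence of self-equivalences $\alpha,\beta$ with $\beta h\alpha=h_1\vee h'$, where $C_{h_1}=\bigvee_i M_{\alpha_i}^{n+2}$ with each $\alpha_i$ odd and $H_{n+2}(h')=0$; this cited result is exactly the hard reduction step you flag. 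It then extends the decomposition from $h$ to the full $f$ (which also involves the Moore summands $M_A=\bigvee M_{3^r}^{n+2}$ and $M_B=\bigvee M_{3^s}^{n}$) by observing that $Hos(S^{n+2},M_B)=0$ and $Hos(M_A,S^{n+2})=0$, so the self-equivalences on the torsion-free part extend by identity without creating new interactions. Your sketch of "extending the admissible transformations to accommodate the $M_{3^r}^{n+2}$-rows and $M_{3^s}^n$-columns" would work, but these vanishing facts are the reason it works, and they deserve to be stated. Without either invoking [PZ] or carrying out the Smith-form realization directly, the backward inclusion is not established.

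Your forward inclusion $\supseteq$ via the long exact homology sequence is correct and essentially what the paper asserts.
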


\begin{proof}
For integers $k,l,t,u,v,w\geq0$, let
 $$A(k,l):=\bigvee_{k}S^{n+2}\vee \bigvee_{l}S^{n+3}\in \mathcal{A}_{0};$$
     $$B(t,u,v,w):=\bigvee_{t}S^{n+2}\vee\bigvee_{u}C_{\eta}\vee\bigvee_{v}S^{n}\vee\bigvee_{w}S^{n+1}\in \mathcal{B}_{0}.$$
For any 2-torsion free polyhedra $X=C_{f}\in \mathcal{A}\dag\mathcal{B}$,  $f\in Hos(A,B)$ where $A\in \mathcal{A}$, $B\in \mathcal{B}$,  suppose that
 $$A=A(k,l)\vee M_{A},~~~~ B=B(t,u,v,w)\vee M_{B},$$ where $M_{A}$ (resp. $M_{B}$) is a wedge of Moore spaces $\{\mo{r}{n+2}~|~r\in \mathbb{N}_{+} \}$ (resp. $\{\mo{s}{n}~|~s\in \mathbb{N}_{+} \}$ ).  Let
  $$\xymatrix{
    A(k,l)\ar@{^{(}->}[r]^{~~j_{A}} & A, & B  \ar@{->>}[r]^{p_{B}~~~~~~} & B(t,u,v,w)
   }$$ be the canonical inclusion and projection of the summands respectively.  For $$h:=p_{B}fj_{A}\in Hos(~A(k,l), ~ B(t,u,v,w)~),$$
  by the proof of Theorem 5.5 of \cite{PZ}, we have the commutable top square in the following Diagram 1, where $k_{1}+k'=k,  k_{1}+t'=t$, $\alpha$ and $\beta$  are  self-homotopy equivalences of $A(k,l)$ and $B(t,u,v,w)$ respectively
   and the maps $$\bigvee_{k_{1}}S^{n+2}\xlongrightarrow{h_1}\bigvee_{k_{1}}S^{n+2},~~~A(k',l)\xlongrightarrow{h'}B(t',u,v,w)$$ satisfy that
\begin{itemize}
  \item [(i)] the mapping cone
  $C_{h_{1}}=\bigvee_{i}M_{\alpha_{i}}^{n+2}$, where $\alpha_{i}\in \mathbb{N}_+$ is odd for each $i$;
  \item [(ii)]   the composition of maps
  $$\bigvee_{k'}S^{n+2}\xlongrightarrow{j}A(k',l)\xlongrightarrow{h'}B(t',u,v,w)\xlongrightarrow{p}\bigvee_{t'}S^{n+2}\vee\bigvee_{u}C_{\eta}$$
  is zero, where $j$ and $p$ are canonical inclusion and projection of the summands respectively. This is equivalent to the statement that $H_{n+2}(h')=0$.
  \end{itemize}
 Note that $Hos(S^{n+2}, M_{B})=0$ and $Hos(M_{A},S^{n+2})=0$. Hence
 $$(\beta\vee 1_{M_{B}})f(\alpha\vee 1_{M_{A}})=h_{1}\vee f'$$
 such that $A(k',l)\vee M_{A}\xlongrightarrow{f'}B(t',u,v,w)\vee M_{B}$ satisfies $H_{n+2}(f')=0$.
 It implies that $$X=C_{f}\simeq C_{h_{1}}\vee C_{f'}=\bigvee_{i}M_{\alpha_{i}}^{n+2}\vee C_{f'},~~ f'\in El(\Gamma). $$
Since for any $f\in El(\Gamma)$, $C(f)=C_{f}$ is 2-torsion free,  by the above analysis, we complete the proof of  Lemma \ref{lemma5.4}.

   $$\xymatrix{
  \s{(\bigvee_{k_{1}}S^{n+2})\vee A(k',l)} \ar@{=}[d] \ar[rrr]^{h_{1}\vee h'} &  &  & \s{(\bigvee_{k_{1}}S^{n+2})\vee B(t',u,v,w)} \ar@{=}[d] \\
  \s{A(k,l)} \ar@{_{(}->}[d]_{j_{A}} \ar[r]^{\alpha~\simeq} &\s{A(k,l)} \ar@{_{(}->}[d]_{j_{A}} \ar[r]^{h} & \s{B(t,u,v,w)} \ar[r]^{\beta~\simeq} &\s{B(t,u,v,w)}  \\
  \s{A} \ar@{=}[d] \ar[r]^{\alpha\vee 1_{\s{M_{A}}}~~\simeq} & \s{A} \ar[r]^{f} & \s{B} \ar@{->>}[u]_{p_{B}} \ar[r]^{\beta\vee 1_{\s{M_{B}}}~~\simeq} & \s{B} \ar@{=}[d] \ar@{->>}[u]_{p_{B}} \\
   \s{(\bigvee_{k_{1}}S^{n+2})\vee (A(k',l)\vee M_{A})} \ar@{.>}[rrr]^{h_{1}\vee f'} & & &\s{(\bigvee_{k_{1}}S^{n+2})\vee (B(t',u,v,w)\vee M_{B}) } }$$
      $$Diagram ~1$$
\end{proof}

Take $H_{0}=\Gamma$ in Corollary \ref{corollary3.3}, then $f_{1}af_{2}=0$ whenever $f_{i}\in Hos(B_{i},A_{i})$ $(i=1,2)$, $a\in \Gamma(A_{2},B_{1})$, $A_{i}\in\mathcal{A}$ and $B_{i}\in \mathcal{B}$.
 Hence by Corollary \ref{corollary3.3}, we have

 $$C: El(\Gamma)\xlongrightarrow{P \simeq_{rep}}El(\Gamma)/\mathcal{J}_{\Gamma}\xlongrightarrow{\overline{C}~\simeq}\mathcal{A}\dag_{n+2} \mathcal{B}/\mathcal{I}_{\Gamma}\xlongleftarrow{P \simeq_{rep}}\mathcal{A}\dag_{n+2}\mathcal{B},$$ which implies the following

 \begin{corollary}\label{corollary 5.5}
  In Lemma \ref{lemma5.4},
  $$\{~C(f)~\text{is indecomposable}~|~f\in El(\Gamma)~\}=ind(\mathcal{A}\dag_{n+2}\mathcal{B})\cong ind El(\Gamma).$$
 \end{corollary}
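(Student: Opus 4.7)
My plan is to invoke Corollary~\ref{corollary3.3} with $H_0 = \Gamma$ and then unpack the resulting chain of categorical equivalences. The first step is to verify the hypothesis of Corollary~\ref{corollary3.3}, namely that $f_1 a f_2 = 0$ whenever $a \in \Gamma(A_2, B_1)$ and $f_i \in Hos(B_i, A_i)$ with $A_i \in \mathcal{A}$, $B_i \in \mathcal{B}$. Since objects of $\mathcal{A}$ are $(n+1)$-connected of dimension at most $n+3$ and objects of $\mathcal{B}$ are $(n-1)$-connected of dimension at most $n+2$, any such $f_i$ is tightly controlled at the $(n+2)$-skeleton level, and the condition $H_{n+2}(a)=0$ ensures that sandwiching $a$ between them yields a null stable map; this is precisely the vanishing already asserted in the paragraph preceding the statement of Corollary~\ref{corollary 5.5}.

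Once the hypothesis holds, Corollary~\ref{corollary3.3} immediately produces the chain
$$C : El(\Gamma) \xlongrightarrow{P \simeq_{rep}} El(\Gamma)/\mathcal{J}_{\Gamma} \xlongrightarrow{\overline{C}~\simeq} \mathcal{A}\dag_{n+2}\mathcal{B}/\mathcal{I}_{\Gamma} \xlongleftarrow{P \simeq_{rep}} \mathcal{A}\dag_{n+2}\mathcal{B}.$$
By the Note following Proposition~\ref{proposition2.2}, each full representation equivalence induces a bijection on indecomposable isomorphism classes, and the same is true for equivalences of categories. Composing the three arrows therefore yields a natural bijection $ind\,El(\Gamma) \leftrightarrow ind(\mathcal{A}\dag_{n+2}\mathcal{B})$ under which $[f] \mapsto [C(f)]$. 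This establishes the isomorphism $ind(\mathcal{A}\dag_{n+2}\mathcal{B}) \cong ind\,El(\Gamma)$ in the statement.

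For the remaining equality, I would simply observe that every object of $\mathcal{A}\dag_{n+2}\mathcal{B}$ is, by construction of the $\dag_m$ notation, a cofiber $C(f)$ for some $f \in El(\Gamma)$. Consequently, any $X \in ind(\mathcal{A}\dag_{n+2}\mathcal{B})$ is isomorphic to some indecomposable $C(f)$; conversely, an indecomposable $C(f)$ manifestly represents an element of $ind(\mathcal{A}\dag_{n+2}\mathcal{B})$. Combined with the bijection above, this gives $\{\,C(f)~\text{indecomposable}~\mid~f \in El(\Gamma)\,\} = ind(\mathcal{A}\dag_{n+2}\mathcal{B})$, which is the first equality.

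The only step requiring genuine thought rather than pure bookkeeping is the verification of $f_1 a f_2 = 0$; but given the connectivity and dimension constraints on $\mathcal{A}$ and $\mathcal{B}$ together with the definition of $\Gamma$, this is routine and does not constitute a real obstacle. The rest is an extraction of the content of Section~\ref{sec:3} applied to the present choice of subcategories.
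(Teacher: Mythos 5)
Your proposal is correct and follows the paper's own argument exactly: the paper also derives Corollary~\ref{corollary 5.5} by applying Corollary~\ref{corollary3.3} with $H_0=\Gamma$, asserting the hypothesis $f_1 a f_2=0$ and reading off the chain of representation equivalences. You spell out the final set-theoretic identification of $ind(\mathcal{A}\dag_{n+2}\mathcal{B})$ with the indecomposable cofibers a bit more explicitly than the paper does, and your justification of $f_1 a f_2=0$ is as informal as the paper's, but in both respects the approach is the same.
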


 In the remainder of this section, we will find the matrix problem corresponding to $El(\Gamma)$.

Computing $\Gamma(A,B)$ for $A\in ind\mathcal{A}$, $B\in ind\mathcal{B}$; $Hos(A,A')$ for $A, A'\in ind\mathcal{A}$ and $Hos(B,B')$ for $B, B'\in ind\mathcal{B}$ as in \cite{RefDrFP}. For example,

\begin{itemize}
  \item [] $\Gamma(S^{n+2}, S^{n+2})=\Gamma(S^{n+2},C_{\eta})=0$;
  \item [] $Hos(\mo{r}{n+2} , \mo{s}{n})=\begin{footnotesize}\begin{tabular}{c|cc|}
   \multicolumn{1}{c}{}&\multicolumn{2}{c}{$\mo{r}{n+2}:\s{ n+2~~n+3}$  }\\
       \cline{2-3}
      $\mo{s}{n}:\s{n}$ & ~~~~~~0 &$\mathbb{Z}/3$\\
      \quad ${\scriptstyle{n+1}}$ & ~~~~~~0& 0 \\
        \cline{2-3}
     \end{tabular} \end{footnotesize}~;$

  \item [] $Hos(\mo{r}{n+2} , C_{\eta})=\begin{footnotesize}\begin{tabular}{c|cc|}
   \multicolumn{1}{c}{}&\multicolumn{2}{c} {$\mo{r}{n+2}:\s{ n+2~~n+3}$  }\\
       \cline{2-3}
      $C_{\eta}:\s{n}$ & ~~~~~0 &$\mathbb{Z}/3$\\
      \quad ${\scriptstyle{n+1}}$ & ~~~~~0& 0 \\
        \cline{2-3}
     \end{tabular}\end{footnotesize}~; $

  \item [] $Hos(\mo{r}{n+2} ,\mo{s}{n+2})=\begin{footnotesize}\begin{tabular}{c|cc|}
   \multicolumn{1}{c}{}&\multicolumn{2}{c} {$ \mo{r}{n+2}:\s{ n+2~~n+3}$  }\\
       \cline{2-3}
      $\mo{s}{n+2}:\s{n+2}$ & $~~~\mathbb{Z}/3^{s=}$ &0\\
      \qquad ${\scriptstyle{n+3}}$ &~~~0& $\mathbb{Z}/3^{r=}$ \\
        \cline{2-3}
     \end{tabular}\end{footnotesize}~, $
\end{itemize}

 $$\text{where}~~\begin{footnotesize}\begin{tabular}{|cc|}
       \hline
      $\mathbb{Z}/3^{s=}$ &0\\
      0& $\mathbb{Z}/3^{r=}$ \\
        \hline
     \end{tabular}\end{footnotesize} = \left\{ \left. \left( \begin{array}{cc}
                                  \bar{a }& 0 \\
                                  0 & \bar{b} \\
                                \end{array}
                              \right) \right |\bar{a }\in \mathbb{Z}/3^{s} , \bar{b }\in \mathbb{Z}/3^{r} ~\text{and}~ 3^ra=3^sb ~\text{in} ~\mathbb{Z} .\right\}$$
          $$~=
           \left\{
            \begin{array}{ll}
             \left\{ \left. \left( \begin{array}{cc}
              \bar{a }& 0 \\
               0 & \s{\overline{3^{r-s}a }}\\
                \end{array}
                \right) \right |\bar{a }\in \mathbb{Z}/3^{s}, \overline{3^{r-s}a }\in \mathbb{Z}/3^{r} \right\} & \hbox{$r>s$} \\
                 \left\{ \left. \left( \begin{array}{cc}
                \s{\overline{3^{s-r}b}} & 0 \\
                   0 & \bar{b} \\
                    \end{array}
                    \right) \right |\overline{3^{s-r}b}\in \mathbb{Z}/3^{s} , \bar{b }\in \mathbb{Z}/3^{r}\right\} & \hbox{$r\leq s$.}
                     \end{array}
               \right.$$

 Now we get the matrix problem $(\widetilde{\mathscr{A}}, \widetilde{\mathcal{G}})$ corresponding to $El(\Gamma)$ as follows.
 The objects of $El(\Gamma)$ can be represented by block matrices $\gamma=(\gamma_{ij})$  with finite order in  $Table~ 4$  which provides the matrix set $\widetilde{\mathscr{A}}$, where block $\gamma_{ij}$ has entries from the $(ij)$-th cell of $Table~ 4$.  Morphisms $\gamma\rightarrow \gamma'$ are given by block matrices $\alpha=(\alpha_{ij})$ and $\beta=(\beta_{ij})$ from $Table~5$ and $Table~6$  respectively with proper order, which provide the admissible transformations $\widetilde{\mathcal{G}}$.

$$\begin{tabular}{|c|c|c|cc|cc|cc|c}
   \multicolumn{10}{c}{$\Gamma(\mathcal{A},\mathcal{B})$}\\
                    \multicolumn{10}{c}{}\\
 \hline
   \diagbox{$\mathcal{B}$}{$\mathcal{A}$} & $S^{n+2}$ & $S^{n+3}$& \multicolumn{2}{c|}{$\mo{}{n+2}:\s{n+2~~n+3}$} &  \multicolumn{2}{c|}{$\mo{2}{n+2}:\s{n+2~~n+3}$} &  \multicolumn{2}{c|}{$\mo{3}{n+2}:\s{n+2~~n+3}$} & $\cdots$ \\
  \hline
 $S^{n}$ &$\mathbb{Z}/2$&$\mathbb{Z}/24$&~~~0&$\mathbb{Z}/3$&~~~0&$\mathbb{Z}/3$&~~~0&$\mathbb{Z}/3$&$\cdots$\\
 \hline
  $S^{n+1}$ &$\mathbb{Z}/2$&$\mathbb{Z}/2$&~~~0&0&~~~0&0&~~~0&0&$\cdots$\\
 \hline
  $S^{n+2}$ &0&$\mathbb{Z}/2$&~~~0&0&~~~0&0&~~~0&0&$\cdots$\\
 \hline
  $C_{\eta}:\s{n}$ &0 &$\mathbb{Z}/12$&~~~0&$\mathbb{Z}/3$&~~~0&$\mathbb{Z}/3$&~~~0&$\mathbb{Z}/3$&$\cdots$\\
  ~~$\s{n+2}$ &0&0&~~~0&0&~~~0&0&~~~0&0&$\cdots$\\
 \hline
  $\mo{}{n}:\s{n}$ &0 &$\mathbb{Z}/3$&~~~0&$\mathbb{Z}/3$&~~~0&$\mathbb{Z}/3$&~~~0&$\mathbb{Z}/3$&$\cdots$\\
  ~~$\s{n+1}$ &0&0&~~~0&0&~~~0&0&~~~0&0&$\cdots$\\
 \hline
  $\mo{2}{n}:\s{n}$ &0 &$\mathbb{Z}/3$&~~~0&$\mathbb{Z}/3$&~~~0&$\mathbb{Z}/3$&~~~0&$\mathbb{Z}/3$&$\cdots$\\
  ~~$\s{n+1}$ &0&0&~~~0&0&~~~0&0&~~~0&0&$\cdots$\\
 \hline
 $\mo{3}{n}:\s{n}$ &0 &$\mathbb{Z}/3$&~~~0&$\mathbb{Z}/3$&~~~0&$\mathbb{Z}/3$&~~~0&$\mathbb{Z}/3$&$\cdots$\\
  ~~$\s{n+1}$ &0&0&~~~0&0&~~~0&0&~~~0&0&$\cdots$\\
 \hline
 $\cdots$& $\cdots$& $\cdots$& $\cdots$& $\cdots$& $\cdots$& $\cdots$& $\cdots$& $\cdots$& $\cdots$\\
 \multicolumn{10}{c}{}\\
 \multicolumn{10}{c}{$Table~4$}\\
 \end{tabular}
$$

$$Hos(\mathcal{A},\mathcal{A})$$
\begin{footnotesize}$$\begin{tabular}{|c|c|c|cc|cc|c|cc|c}
    \hline
   \diagbox{$\mathcal{A}$}{$\mathcal{A}$} & $S^{n+2}$ & $S^{n+3}$& \multicolumn{2}{c|}{$\mo{}{n+2}:\s{n+2~~n+3}$} &  \multicolumn{2}{c|}{$\mo{2}{n+2}:\s{n+2~~n+3}$} & $\cdots$& \multicolumn{2}{c|}{$\mo{r}{n+2}:\s{n+2~~n+3}$} & $\cdots$ \\
        \hline
   $S^{n+2}$  &$\mathbb{Z}$&$\mathbb{Z}/2$&0&0&0&0&$\cdots$&0&0&$\cdots$  \\
        \hline
  $S^{n+3}$  &0&$\mathbb{Z}$&0&$\mathbb{Z}/3$&0&$\mathbb{Z}/3^2$&$\cdots$&0&$\mathbb{Z}/3^r$&$\cdots$\\
        \hline
   $\mo{}{n+2}:\s{n+2}$&$\mathbb{Z}/3$&0&$\mathbb{Z}/3^=$&0&$\mathbb{Z}/3^=$&0&$\cdots$&$\mathbb{Z}/3^=$&0&$\cdots$\\
    $~~~\s{n+3}$&0&0&0&$\mathbb{Z}/3^=$&0&$\mathbb{Z}/3^{2=}$&$\cdots$&0&$\mathbb{Z}/3^{r=}$&$\cdots$\\
        \hline
   $\mo{2}{n+2}:\s{n+2}$&$\mathbb{Z}/3^{2}$&0&$\mathbb{Z}/3^{2=}$&0&$\mathbb{Z}/3^{2=}$&0&$\cdots$&$\mathbb{Z}/3^{2=}$&0&$\cdots$\\
    $~~~\s{n+3}$&0&0&0&$\mathbb{Z}/3^=$&0&$\mathbb{Z}/3^{2=}$&$\cdots$&0&$\mathbb{Z}/3^{r=}$&$\cdots$\\
        \hline
   $\cdots$ & $\cdots$ & $\cdots$ & $\cdots$ & $\cdots$ & $\cdots$ & $\cdots$ & $\cdots$ & $\cdots$ & $\cdots$ & $\cdots$\\
   \hline
     $\mo{r}{n+2}:\s{n+2}$&$\mathbb{Z}/3^{r}$&0&$\mathbb{Z}/3^{r=}$&0&$\mathbb{Z}/3^{r=}$&0&$\cdots$&$\mathbb{Z}/3^{r=}$&0&$\cdots$\\
    $~~~\s{n+3}$&0&0&0&$\mathbb{Z}/3^=$&0&$\mathbb{Z}/3^{2=}$&$\cdots$&0&$\mathbb{Z}/3^{r=}$&$\cdots$\\
        \hline
   $\cdots$ & $\cdots$ & $\cdots$ & $\cdots$ & $\cdots$ & $\cdots$ & $\cdots$ & $\cdots$ & $\cdots$ & $\cdots$ & $\cdots$\\
  \end{tabular}
$$\end{footnotesize}
$$Table~5$$

$$ \begin{tabular}{|c|c|c|c|cc|cc|cc|c|cc|c}
                   \multicolumn{14}{c}{$Hos(\mathcal{B},\mathcal{B})$}\\
                    \multicolumn{14}{c}{}\\
                    \hline
             \diagbox{$\mathcal{B}$}{$\mathcal{B}$} & $\s{S^{n}}$&$\s{S^{n+1}}$&$\s{S^{n+2}}$& \multicolumn{2}{c|}{$\s{C_{\eta}:n~~n+2}$}&\multicolumn{2}{c|}{$\s{\mo{}{n}:n~~n+1}$}&\multicolumn{2}{c|}{$\s{\mo{2}{n}:n~~n+1}$}
              &$\s\cdots$&\multicolumn{2}{c|}{$\s{\mo{r}{n}:n~~n+1}$}&$\s\cdots$\\
                  \hline
 $\s{S^{n}}$ &$\s{\mathbb{Z}}$&$\s{\mathbb{Z}/2}$&$\s{\mathbb{Z}/2}$&$\s{2\mathbb{Z}}$&0&0&0&0&0&$\s \cdots$&0&0&$\s\cdots$ \\
 \hline
  $\s{S^{n+1}}$ &0&$\s{\mathbb{Z}}$&$\s{\mathbb{Z}/2}$&0&0&0&0&0&0&$\s \cdots$&0&0&$\s\cdots$ \\
  \hline
  $\s{S^{n+2}}$ &0&0&$\s{\mathbb{Z}}$&0&$\s{\mathbb{Z}}$&0&0&0&0&$\s \cdots$&0&0&$\s\cdots$ \\
  \hline
 $\s{C_{\eta}:n}$ &$\s{\mathbb{Z}}$&0&$\s{\mathbb{Z}/2^{=}}$&$\s{\mathbb{Z}^=}$&0&0&0&0&0&$\s \cdots$&0&0&$\s\cdots$ \\
   ~~$\s{n+2}$ &0&0&$\s{2\mathbb{Z}^{=}}$&0&$\s{\mathbb{Z}^=}$&0&0&0&0&$\s \cdots$&0&0&$\s\cdots$  \\
     \hline
 $\s{\mo{}{n}:n}$ &$\s{\mathbb{Z}/3}$&0&0&$\s{\mathbb{Z}/3}$&0&$\s{\mathbb{Z}/3^=}$&0&$\s{\mathbb{Z}/3^=}$&0&$\s \cdots$&$\s{\mathbb{Z}/3^=}$&0&$\s\cdots$ \\
   ~~$\s{n+1}$ &0&0&0&0&0&0&$\s{\mathbb{Z}/3^=}$&0&$\s{\mathbb{Z}/3^{2=}}$&$\s \cdots$&0&$\s{\mathbb{Z}/3^{r=}}$&$\s\cdots$  \\
     \hline
 $\s{\mo{2}{n}:n}$ &$\s{\mathbb{Z}/3^2}$&0&0&$\s{\mathbb{Z}/3^2}$&0&$\s{\mathbb{Z}/3^{2=}}$&0&$\s{\mathbb{Z}/3^{2=}}$&0&$\s \cdots$&$\s{\mathbb{Z}/3^{2=}}$&0&$\s\cdots$ \\
   ~~$\s{n+1}$ &0&0&0&0&0&0&$\s{\mathbb{Z}/3^=}$&0&$\s{\mathbb{Z}/3^{2=}}$&$\s \cdots$&0&$\s{\mathbb{Z}/3^{r=}}$&$\s\cdots$  \\
     \hline
  $\s\cdots$ &$\s\cdots$&$\s\cdots$&$\s\cdots$&$\s\cdots$&$\s\cdots$&$\s\cdots$&$\s\cdots$&$\s\cdots$&$\s\cdots$&$\s \cdots$&$\s\cdots$&$\s\cdots$&$\s\cdots$ \\
  \hline
  $\s{\mo{r}{n}:n}$&$\s{\mathbb{Z}/3^r}$&0&0&$\s{\mathbb{Z}/3^r}$&0&$\s{\mathbb{Z}/3^{r=}}$&0&$\s{\mathbb{Z}/3^{r=}}$&0&$\s \cdots$&$\s{\mathbb{Z}/3^{r=}}$&0&$\s\cdots$ \\
   ~~$\s{n+1}$ &0&0&0&0&0&0&$\s{\mathbb{Z}/3^=}$&0&$\s{\mathbb{Z}/3^{2=}}$&$\s \cdots$&0&$\s{\mathbb{Z}/3^{r=}}$&$\s\cdots$  \\
     \hline
     $\s\cdots$ &$\s\cdots$&$\s\cdots$&$\s\cdots$&$\s\cdots$&$\s\cdots$&$\s\cdots$&$\s\cdots$&$\s\cdots$&$\s\cdots$&$\s \cdots$&$\s\cdots$&$\s\cdots$&$\s\cdots$ \\
       \multicolumn{14}{c}{}\\
       \multicolumn{14}{c}{$Table~6$}\\
 \end{tabular}$$

It is well known
$$ind \widetilde{\mathscr{A}}\cong ind El(\Gamma)$$

  We eliminate the zero stripes $\mo{r}{n+2}:\s{n+2}$ and $\mo{s}{n}:\s{n+1}$ of matrices in $\widetilde{\mathscr{A}}$ to simplify the matrix problem $(\widetilde{\mathscr{A}}, \widetilde{\mathcal{G}})$ to the following equivalent matrix problem $(\mathscr{A}', \mathcal{G}')$.

$$\begin{tabular}{|c|c|c|c|c|c|c}
   \multicolumn{7}{c}{$\Gamma'(\mathcal{A},\mathcal{B})$}\\
                    \multicolumn{7}{c}{}\\
 \hline
   \diagbox{$\mathcal{B}$}{$\mathcal{A}$} & $S^{n+2}$ & $S^{n+3}$& \multicolumn{1}{c|}{$\mo{}{n+2}$} &  \multicolumn{1}{c|}{$\mo{2}{n+2}$} &  \multicolumn{1}{c|}{$\mo{3}{n+2}$} & $\cdots$ \\
  \hline
 $S^{n}$ &$\mathbb{Z}/2$&$\mathbb{Z}/24$&$\mathbb{Z}/3$&$\mathbb{Z}/3$&$\mathbb{Z}/3$&$\cdots$\\
 \hline
  $S^{n+1}$ &$\mathbb{Z}/2$&$\mathbb{Z}/2$&0&0&0&$\cdots$\\
 \hline
  $S^{n+2}$ &0&$\mathbb{Z}/2$&0&0&0&$\cdots$\\
 \hline
  $C_{\eta}:\s{n}$ &0 &$\mathbb{Z}/12$&$\mathbb{Z}/3$&$\mathbb{Z}/3$&$\mathbb{Z}/3$&$\cdots$\\
  ~~$\s{n+2}$ &0&0&0&0&0&$\cdots$\\
 \hline
  $\mo{}{n}$ &0 &$\mathbb{Z}/3$&$\mathbb{Z}/3$&$\mathbb{Z}/3$&$\mathbb{Z}/3$&$\cdots$\\
 \hline
  $\mo{2}{n}$ &0 &$\mathbb{Z}/3$&$\mathbb{Z}/3$&$\mathbb{Z}/3$&$\mathbb{Z}/3$&$\cdots$\\
 \hline
 $\mo{3}{n}$ &0 &$\mathbb{Z}/3$&$\mathbb{Z}/3$&$\mathbb{Z}/3$&$\mathbb{Z}/3$&$\cdots$\\
 \hline
 $\cdots$&  $\cdots$&  $\cdots$& $\cdots$& $\cdots$& $\cdots$& $\cdots$\\
 \multicolumn{7}{c}{}\\
 \multicolumn{7}{c}{$Table~7$}\\
 \end{tabular}
$$

$$\begin{tabular}{|c|c|c|c|c|c|c|c}
      \multicolumn{8}{c}{$Hos'(\mathcal{A},\mathcal{A})$}\\
                    \multicolumn{8}{c}{}\\
    \hline
    \diagbox{$\mathcal{A}$}{$\mathcal{A}$}& $S^{n+2}$ & $S^{n+3}$& \multicolumn{1}{c|}{$\mo{}{n+2}$} &  \multicolumn{1}{c|}{$\mo{2}{n+2}$} & $\cdots$& \multicolumn{1}{c|}{$\mo{r}{n+2}$} & $\cdots$ \\
        \hline
   $S^{n+2}$  &$\mathbb{Z}$&$\mathbb{Z}/2$&0&0&$\cdots$&0&$\cdots$  \\
        \hline
  $S^{n+3}$  &0&$\mathbb{Z}$&$\mathbb{Z}/3$&$\mathbb{Z}/3^2$&$\cdots$&$\mathbb{Z}/3^r$&$\cdots$\\
        \hline
    $\mo{}{n+2}$&0&0&$\mathbb{Z}/3$&0&$\cdots$&0&$\cdots$\\
        \hline
    $\mo{2}{n+2}$&0&0&$\mathbb{Z}/3$&$\mathbb{Z}/3^{2}$&$\cdots$&0&$\cdots$\\
        \hline
   $\cdots$ & $\cdots$ & $\cdots$ & $\cdots$ & $\cdots$ & $\cdots$ & $\cdots$ & $\cdots$\\
   \hline
    $\mo{r}{n+2}$&0&0&$\mathbb{Z}/3$&$\mathbb{Z}/3^{2}$&$\cdots$&$\mathbb{Z}/3^{r}$&$\cdots$\\
        \hline
   $\cdots$ &  $\cdots$ & $\cdots$ & $\cdots$ & $\cdots$ & $\cdots$ & $\cdots$ & $\cdots$\\
  \multicolumn{8}{c}{}\\
 \multicolumn{8}{c}{$Table~8$}\\
\end{tabular}
$$

$$ \begin{tabular}{|c|c|c|c|cc|c|c|c|c|c}
                   \multicolumn{11}{c}{$Hos'(\mathcal{B},\mathcal{B})$}\\
                    \multicolumn{11}{c}{}\\
                    \hline
              \diagbox{$\mathcal{B}$}{$\mathcal{B}$}& $S^{n}$&$S^{n+1}$&$S^{n+2}$& \multicolumn{2}{c|}{$C_{\eta}:\s{n~~n+2}$}&\multicolumn{1}{c|}{$\mo{}{n}$}&\multicolumn{1}{c|}{$\mo{2}{n}$}
              &$\s\cdots$&\multicolumn{1}{c|}{$\mo{r}{n}$}&$\s\cdots$\\
                  \hline
 $S^{n}$ &$\mathbb{Z}$&$\mathbb{Z}/2$&$\mathbb{Z}/2$&$2\mathbb{Z}$&0&0&0&$\s \cdots$&0&$\s\cdots$ \\
 \hline
  $S^{n+1}$ &0&$\mathbb{Z}$&$\mathbb{Z}/2$&0&0&0&0&$\s \cdots$&0&$\s\cdots$ \\
  \hline
  $S^{n+2}$ &0&0&$\mathbb{Z}$&0&$\mathbb{Z}$&0&0&$\s \cdots$&0&$\s\cdots$ \\
  \hline
 $C_{\eta}:\s{n}$ &$\mathbb{Z}$&0&$\mathbb{Z}/2^{=}$&$\mathbb{Z}^=$&0&0&0&$\s \cdots$&0&$\s\cdots$ \\
   ~~$\s{n+2}$ &0&0&$2\mathbb{Z}^{=}$&0&$\mathbb{Z}^=$&0&0&$\s \cdots$&0&$\s\cdots$  \\
     \hline
     $\mo{}{n}$ &$\mathbb{Z}/3$&0&0&$\mathbb{Z}/3$&0&$\mathbb{Z}/3$&$\mathbb{Z}/3$&$\s \cdots$&$\mathbb{Z}/3$&$\s\cdots$ \\
     \hline
 $\mo{2}{n}$ &$\mathbb{Z}/3^2$&0&0&$\mathbb{Z}/3^2$&0&0&$\mathbb{Z}/3^{2}$&$\s \cdots$&$\mathbb{Z}/3^{2}$&$\s\cdots$ \\
     \hline
  $\s\cdots$ &$\s\cdots$&$\s\cdots$&$\s\cdots$&$\s\cdots$&$\s\cdots$&$\s\cdots$&$\s \cdots$&$\s\cdots$&$\s\cdots$&$\s\cdots$ \\
  \hline
  $\mo{r}{n}$&$\mathbb{Z}/3^r$&0&0&$\mathbb{Z}/3^r$&0&0&0&$\s \cdots$&$\mathbb{Z}/3^{r}$&$\s\cdots$ \\
     \hline
 \multicolumn{11}{c}{}\\
       \multicolumn{11}{c}{$Table~9$}\\
 \end{tabular}.$$

 In the above tables, $\mo{r}{n+2}$ represents the $\mo{r}{n+2}:\s{n+3}$-vertical stripe and $\mo{s}{n}$ represents the $\mo{s}{n}:\s{n}$-horizontal stripe. $Table~7$ provides the matrix set $\mathscr{A}'$; $Table~8$ and $Table~9$ provide the (non-trivial) admissible transformations $\mathcal{G}'$ :
\begin{itemize}
   \item [$\mathcal{G}'_{el}$] :  ``elementary-row~(column)'' transformations of each horizontal (vertical) stripe .
   \item [$\mathcal{G}'_{co}$] :
    \begin{itemize}
     \item [(i)] $W^{S^{n+2}}<W^{S^{n+3}}$
      \item [(ii)] $W^{S^{n+3}}<W^{\mo{r}{n+2}}$ ; $W^{\mo{r+1}{n+2}}<W^{\mo{r}{n+2}}$ for any $r\in \mathbb{N}_+$
        \end{itemize}
   \item [$\mathcal{G}'_{ro}$]:
 \begin{itemize}
   \item [(i)] $W_{S^{n+2}}<W_{S^{n+1}}<W_{S^{n}}$
    \item [(ii)] $W_{S^{n}}< W_{C_{\eta}:n}< W_{\mo{s}{n}}$ ; $ W_{\mo{s+1}{n}}< W_{\mo{s}{n}}$ ; $2W_{C_{\eta}:n}<W_{S^{n}}$  for any $s\in \mathbb{N}_+$
    \item [(iii)] $6W_{S^{n+2}}<W_{C_{\eta:n}}$
    \end{itemize}
   \end{itemize}

 \begin{remark}\label{remark 5.6}
 \begin{itemize}
   \item []
   \item [(1)] $W_{x}<W_{y}$ means that adding $k$ times of a row of $W_{x}$ to a row of $W_{y}$  is admissible and $aW_{x}<W_{y}$ ($a\in\mathbb{N}_+$) means  adding $ak$ times of a row of $W_{x}$ to a row of $W_{y}$ is admissible where $k$ is an any nonzero integer.  $W^{x}<W^{y}$ has the similar meaning for corresponding vertical stripes.
   \item [(2)] Similarly, zero blocks in $Table~7$  should keep being zero after admissible transformations. Adding $1\in \mathbb{Z}/2$ to an element $a\in \mathbb{Z}/24$ gives $a+12$ in $\mathbb{Z}/24$.
   \item [(3)] Special rules of matrix product in $Table~3$ are also needed for matrix product in $Table~9$.
 \end{itemize}
 \end{remark}

\section{Computation of $ind\mathscr{A}'$ for matrix problem  $(\mathscr{A}', \mathcal{G}')$}
\label{sec:6}
 In this section we solve the matrix problem $(\mathscr{A}', \mathcal{G}')$ to get $ind\mathscr{A}'$, then we get the $ind \mathbf{F}_{n(2)}^{4}$ by $ind\mathscr{A}'$.

\subsection{$p$-primary component of matrix problem  $(\mathscr{A}', \mathcal{G}')$ $p=2,3$}
\label{subsec:6.1}

 Let $\Gamma'(\mathcal{A}, \mathcal{B})_{(2)}$ be the $2$-primary component of $\Gamma'(\mathcal{A}, \mathcal{B})$, that means we replace $\mathbb{Z}/24$ by $\mathbb{Z}/8$, $\mathbb{Z}/12$ by $\mathbb{Z}/4$ and $\mathbb{Z}/3$ by 0 in $Table~7$. Similarly,  $\Gamma'(\mathcal{A}, \mathcal{B})_{(3)}$ is the $3$-primary component of $\Gamma'(\mathcal{A}, \mathcal{B})$, that means we replace $\mathbb{Z}/24$ by $\mathbb{Z}/3$, $\mathbb{Z}/12$ by $\mathbb{Z}/3$ and  $\mathbb{Z}/2$ by 0 in $Table~7$.

 Then we get the following two matrix problems $(\mathscr{A}'_{(2)}, \mathcal{G}')$ and $(\mathscr{A}'_{(3)}, \mathcal{G}')$ with admissible transformations also provided by $Table~8$ and $Table~9$.

$$\begin{tabular}{|c|c|c|c|c|c}
   \multicolumn{6}{c}{$\Gamma'(\mathcal{A}, \mathcal{B})_{(2)}$}\\
                    \multicolumn{6}{c}{}\\
 \hline
  \diagbox{$\mathcal{B}$}{$\mathcal{A}$}  & $S^{n+2}$ & $S^{n+3}$& \multicolumn{1}{c|}{$\mo{}{n+2}$} &  \multicolumn{1}{c|}{$\mo{2}{n+2}$}   & $\cdots$ \\
  \hline
 $S^{n}$ &$\mathbb{Z}/2$&$\mathbb{Z}/8$&0&0&$\cdots$\\
 \hline
  $S^{n+1}$ &$\mathbb{Z}/2$&$\mathbb{Z}/2$&0&0&$\cdots$\\
 \hline
  $S^{n+2}$ &0&$\mathbb{Z}/2$&0&0&$\cdots$\\
 \hline
  $C_{\eta}:\s{n}$ &0 &$\mathbb{Z}/4$&0&0&$\cdots$\\
  ~~$\s{n+2}$ &0&0&0&0&$\cdots$\\
 \hline
  $\mo{}{n}$ &0 &0&0&0&$\cdots$\\
 \hline
  $\mo{2}{n}$ &0 &0&0&0&$\cdots$\\
 \hline
 $\cdots$&  $\cdots$&  $\cdots$& $\cdots$& $\cdots$& $\cdots$\\
 \multicolumn{6}{c}{}\\
 \multicolumn{6}{c}{$Table~10$}\\
 \end{tabular}
$$

The list of non-trivial admissible transformations on $\Gamma'(\mathcal{A}, \mathcal{B})_{(2)}$ is :
\begin{itemize}
  \item [$el$]: ``elementary-row~(column)'' transformations of each horizontal (vertical) stripe ;
  \item [$co$]: $W^{S^{n+2}}<W^{S^{n+3}}$ ;
  \item [$ro$] : $W_{S^{n}}< W_{C_{\eta}:n}$ ; $2W_{C_{\eta}:n}<W_{S^{n}}$ ; $2W_{S^{n+2}}< W_{C_{\eta}:n}$ .
\end{itemize}

\
\
$$\begin{tabular}{|c|c|c|c|c|c|c}
   \multicolumn{7}{c}{$\Gamma'(\mathcal{A}, \mathcal{B})_{(3)}$}\\
                    \multicolumn{7}{c}{}\\
 \hline
  \diagbox{$\mathcal{B}$}{$\mathcal{A}$}  & $S^{n+2}$ & $S^{n+3}$& \multicolumn{1}{c|}{$\mo{}{n+2}$} &  \multicolumn{1}{c|}{$\mo{2}{n+2}$} &  \multicolumn{1}{c|}{$\mo{3}{n+2}$} & $\cdots$ \\
  \hline
 $S^{n}$ &0&$\mathbb{Z}/3$&$\mathbb{Z}/3$&$\mathbb{Z}/3$&$\mathbb{Z}/3$&$\cdots$\\
 \hline
  $S^{n+1}$ &0&0&0&0&0&$\cdots$\\
 \hline
  $S^{n+2}$ &0&0&0&0&0&$\cdots$\\
 \hline
  $C_{\eta}:\s{n}$ &0 &$\mathbb{Z}/3$&$\mathbb{Z}/3$&$\mathbb{Z}/3$&$\mathbb{Z}/3$&$\cdots$\\
  ~~$\s{n+2}$ &0&0&0&0&0&$\cdots$\\
 \hline
  $\mo{}{n}$ &0 &$\mathbb{Z}/3$&$\mathbb{Z}/3$&$\mathbb{Z}/3$&$\mathbb{Z}/3$&$\cdots$\\
 \hline
  $\mo{2}{n}$ &0 &$\mathbb{Z}/3$&$\mathbb{Z}/3$&$\mathbb{Z}/3$&$\mathbb{Z}/3$&$\cdots$\\
 \hline
 $\mo{3}{n}$ &0 &$\mathbb{Z}/3$&$\mathbb{Z}/3$&$\mathbb{Z}/3$&$\mathbb{Z}/3$&$\cdots$\\
 \hline
 $\cdots$&  $\cdots$&  $\cdots$& $\cdots$& $\cdots$& $\cdots$& $\cdots$\\
 \multicolumn{7}{c}{}\\
 \multicolumn{7}{c}{$Table~11$}\\
 \end{tabular}
$$

The list of non-trivial admissible transformations on $\Gamma'(\mathcal{A}, \mathcal{B})_{(3)}$ is :

\begin{itemize}
   \item [$el$] :  ``elementary-row~(column)'' transformations of each horizontal (vertical) stripe.
   \item [$co$] :   $W^{S^{n+3}}<W^{\mo{r}{n+2}}$; $W^{\mo{r+1}{n+2}}<W^{\mo{r}{n+2}}$ for any $r\in \mathbb{N}_+$
   \item [$ro$]:
   $W_{S^{n}}< W_{C_{\eta}:n}< W_{\mo{s}{n}}$ ; $ W_{\mo{s+1}{n}}< W_{\mo{s}{n}}$ ; $2W_{C_{\eta}:n}<W_{S^{n}}$  for any $s\in \mathbb{N}_+$.
   \end{itemize}

   Let $\Delta_{(2)(3)}$ be the subset of $\Gamma'(\mathcal{A} ,\mathcal{B})_{(2)}\times \Gamma'(\mathcal{A}, \mathcal{B})_{(3)}$ which consists of $(M_2,M_3)$  such that for every $x\in ind\mathcal{A}$, $y\in ind\mathcal{B}$, the two blocks $W_{x}^{y}$  respectively in matrixes $M_2$ and $M_3$
    have the same order. Define the map
  $$\Gamma'(\mathcal{A},\mathcal{B})\xlongrightarrow{L=(L_2, L_3)}\Delta_{(2)(3)}\subset \Gamma'(\mathcal{A} ,\mathcal{B})_{(2)}\times \Gamma'(\mathcal{A}, \mathcal{B})_{(3)}.$$

$L$ is given by the following ring  isomorphisms
$$\mathbb{Z}/24 \xlongrightarrow{L_{24}} \mathbb{Z}/8\times \mathbb{Z}/3 ~~,~~1\mapsto (1,1) ;$$
$$\mathbb{Z}/12 \xlongrightarrow{L_{12}} \mathbb{Z}/4\times \mathbb{Z}/3 ~~,~~1\mapsto (1,1) .$$

The inverse map $T$ of $L$
$$\Delta_{(2)(3)}\xlongrightarrow{T}\Gamma'(\mathcal{A}, \mathcal{B})$$
is given by the following two ring isomorphisms
$$ \mathbb{Z}/8\times \mathbb{Z}/3 \xlongrightarrow{T_{8}} \mathbb{Z}/24~~,~~(a ,b )\mapsto 9a+16b ;$$
$$ \mathbb{Z}/4\times \mathbb{Z}/3  \xlongrightarrow{T_{4}} \mathbb{Z}/12~~,~~(a,b)\mapsto 9a+4b .$$
which are the inverse of $L_{24}$ and $L_{12}$ respectively.

It's easy to know that if $M\cong N$ in matrix problem $(\mathscr{A}', \mathcal{G}')$ then  $L_2(M)\cong L_2(N)$ and
$L_3(M)\cong L_3(N)$ in matrix problem $(\mathscr{A}'_{(2)}, \mathcal{G}')$ and $(\mathscr{A}'_{(3)}, \mathcal{G}')$ respectively. We don't know whether the inverse is true. However, in the following we will show that the inverse will  be true if we take some restrictions to the admissible transformations on $\mathscr{A}'_{(2)}$ and $\mathscr{A}'_{(3)}$.
\\

Some notations :
\begin{itemize}
  \item [(1)] Let $Hos'(\mathcal{A}, \mathcal{A})(y_1,y_2,\cdots,y_n)$ (resp. $Hos'(\mathcal{B}, \mathcal{B})(x_1,x_2,\cdots,x_m)$) be the set of all square matrices in the $Hos'(\mathcal{A}, \mathcal{A})$ (resp. $Hos'(\mathcal{B}, \mathcal{B})$) with only $y_1,y_2,\cdots,y_n$-stripes (resp. $x_1,x_2,\cdots,x_m$-stripes  ).

   Especially, we denote $\mathcal{V}= Hos'(\mathcal{B}, \mathcal{B})(S^{n},S^{n+1},S^{n+2},C_{\eta}:{\s n})$ (note that there is no $C_{\eta}:\s{n+2}$-stripe). And we call the sub-matrix which contains entries in $S^{n}$, $S^{n+1}$, $S^{n+2}$, $C_{\eta}:\s n$-stripes of $M\in Hos'(\mathcal{B}, \mathcal{B})$ ``$\mathcal{V}$-part of $M$''.

  \item [(2)] Let $I$ be the identity matrix and $E_{ij}$ be the matrix whose unique non-zero entry has index $(i,j)$ and equals 1. Then the (i+$a$j)-type of  elementary row (column) transformations corresponds to left  (right) multiplication by an elementary matrix $I+aE_{ij}$ ($I+aE_{ji}$), and ($-1$i)-type of  elementary row (column) transformations corresponds to left (right) multiplication by an elementary matrix $I-2E_{ii}$.
      Note that the (i,j)-type of  transformations can be obtained by composition of  (i+$a$j)-type  and ($-1$i)-type of elementary transformations.
\end{itemize}

Let $\he{A}^+$ be the subset consists of  invertible matrices $\alpha=\begin{tabular}{|c|c|}
 \hline
  $U_1$& $U_2$ \\
    \hline
      0 & $U_4$ \\
        \hline
   \end{tabular}$
in $Hos'(\mathcal{A}, \mathcal{A})$, where  $U_1\in Hos'(\mathcal{A}, \mathcal{A})(S^{n+2},S^{n+3})$ is a product of elementary matrices $I+aE_{ij}$, $i\neq j$.

 Let $\he{B}^+$ be the subset consists of invertible matrices $\beta=\begin{tabular}{|c|c|}
  \hline
    $V_1$& 0 \\
      \hline
         $V_3$ & $V_4$ \\
         \hline
        \end{tabular} $
in   $Hos'(\mathcal{B},\mathcal{B})$, where
 $$V_1=\begin{tabular}{|c|c|c|cc|}
 \hline
  $V_{11}$ & $V_{12}$& $V_{13}$ & $2V_{14}$ &0 \\
  \hline
   0 & $V_{22}$ & $V_{23}$ & 0& 0 \\
   \hline
    0 &0 & $V_{33}$ & 0 & 0 \\
    \hline
   $V_{41}$ & 0 &$V_{43}$ & $V_{44}$ & 0 \\
    0 & 0 & 0 & 0 & $V_{55}$ \\
    \hline
     \end{tabular}
$$ which is an element in

$$ Hos'(\mathcal{B}, \mathcal{B})(S^{n},S^{n+1}, S^{n+2}, C_{\eta}: {\s{n}} , C_{\eta}:{\s{n+2}})=\begin{tabular}{|c|c|c|cc|}
 \hline
  $\mathbb{Z}$ & $\mathbb{Z}/2$ & $\mathbb{Z}/2$  & 2$\mathbb{Z}$  &0 \\
  \hline
   0 & $\mathbb{Z}$ & $\mathbb{Z}/2$ & 0& 0 \\
   \hline
    0 &0 & $\mathbb{Z}$ & 0 & $\mathbb{Z}$ \\
    \hline
   $\mathbb{Z}$ & 0 & $\mathbb{Z}/2^{=}$ & $\mathbb{Z}^=$ & 0 \\
    0 & 0 & $2\mathbb{Z}^{=}$ & 0 & $\mathbb{Z}^=$ \\
    \hline
     \end{tabular}$$
 such that the $\mathcal{V}$-part
$W_1=\begin{tabular}{|c|c|c|c|}
 \hline
  $V_{11}$ & $V_{12}$& $V_{13}$ & $2V_{14}$ \\
  \hline
   0 & $V_{22}$ & $V_{23}$ & 0 \\
   \hline
    0 &0 & $V_{33}$ & 0  \\
    \hline
   $V_{41}$ & 0 & $V_{43}$& $V_{44}$  \\
    \hline
     \end{tabular}$  of $V_1$ is a product of elementary matrices $I+aE_{ij}$, $i\neq j$.

 Denoting by $\mathcal{G}'^+$  the admissible transformations provided by $\he{A}^+$ and $\he{B}^+$ on $\Gamma'(\mathcal{A},\mathcal{B})_{(2)}$ and $\Gamma'(\mathcal{A},\mathcal{B})_{(3)}$,  we get two new matrix problems $(\mathscr{A}'_{(2)}, \mathcal{G}'^+)$  and $(\mathscr{A}'_{(3)}, \mathcal{G}'^+)$.
\\
\\
\textbf{The differences between $(\mathscr{A}'_{(2)}, \mathcal{G}'^+)$ and $(\mathscr{A}'_{(2)}, \mathcal{G}')$ }:
The list of non-trivial admissible transformations of matrix problem $(\mathscr{A}'_{(2)}, \mathcal{G}'^+)$  is the same as that of matrix problem $(\mathscr{A}'_{(2)}, \mathcal{G}')$  except that (-1i)-type of elementary transformations are not allowed, and (i,j)-type should be replaced by (i, -j)-type or (-i,j)-type, which means when we transport two rows (columns) of a  stripe, one row (column) $\alpha$ of them is replaced by $-\alpha$.
\\
\\
 \textbf{The differences between $(\mathscr{A}'_{(3)}, \mathcal{G}'^+)$ and $(\mathscr{A}'_{(3)}, \mathcal{G}')$ }:
 The list of non-trivial admissible transformations of matrix problem $(\mathscr{A}'_{(3)}, \mathcal{G}'^+)$  is the same as that of matrix problem $(\mathscr{A}'_{(3)}, \mathcal{G}')$   except that (-1i)-type of elementary transformations on the $W^{S^{n+3}}$, $W_{S^{n}}$ and $W_{C_{\eta}:{\s{n}}}$ are not allowed; (i,j)-type elementary transformations on $W^{S^{n+3}}$, $W_{S^{n}}$ and $W_{C_{\eta}:{\s{n}}}$ should be replaced by (i,-j)-type or (-i,j)-type.

\begin{theorem}\label{theorem6.1}
If $M_{(2)}\cong N_{(2)}$ in matrix problem $(\mathscr{A}'_{(2)}, \mathcal{G}'^+)$ and $M_{(3)}\cong N_{(3)}$ in matrix problem $(\mathscr{A}'_{(3)}, \mathcal{G}'^+)$ , then $T(M_{(2)} , M_{(3)})\cong T( N_{(2)}, N_{(3)})$  in matrix problem  $(\mathscr{A}', \mathcal{G}')$.
\end{theorem}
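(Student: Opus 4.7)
The plan is to lift the two primary-component isomorphisms to a single isomorphism in $(\mathscr{A}',\mathcal{G}')$ using the Chinese Remainder ring isomorphisms $T_8\colon\mathbb{Z}/8\times\mathbb{Z}/3\to\mathbb{Z}/24$ and $T_4\colon\mathbb{Z}/4\times\mathbb{Z}/3\to\mathbb{Z}/12$. The restriction to $\mathcal{G}'^+$ in the hypothesis is exactly what makes the lift possible: a sign flip in only one primary component would correspond via $T_8$ to a mixed unit like $T_8(-1,1)=7\in\mathbb{Z}/24$, which cannot be realized by any integer entry of $Hos'(\mathcal{A},\mathcal{A})$ or $Hos'(\mathcal{B},\mathcal{B})$. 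Requiring all witnesses to be products of elementary shears $I+aE_{ij}$ with $i\neq j$ removes this obstruction.

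\textbf{Step 1 (Construction).} Choose witnesses $(\alpha_{(p)},\beta_{(p)})\in\he{A}^{+}\times\he{B}^{+}$ for $M_{(p)}\cong N_{(p)}$ in $(\mathscr{A}'_{(p)},\mathcal{G}'^{+})$, $p=2,3$. By the definition of $\he{A}^{+}$ and $\he{B}^{+}$, the block $U_1$ of $\alpha_{(p)}$ and the $\mathcal{V}$-part $W_1$ of $V_1$ in $\beta_{(p)}$ factor as products of shears $I+aE_{ij}$. For each such shear modify $a$ as follows: if $a$ lies in a $\mathbb{Z}$- or $2\mathbb{Z}$-valued cell, replace $a$ by an integer $a'$ selected via CRT with $a'\equiv a\pmod{8}$ and $a'\equiv 0\pmod{3}$ in the 2-primary case (symmetrically $a'\equiv 0\pmod{8}$, $a'\equiv a\pmod{3}$ in the 3-primary case); parity is preserved, so $2\mathbb{Z}$-cells remain valid. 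If $a$ already lies in a purely $p$-primary finite cell (like $\mathbb{Z}/2$ or $\mathbb{Z}/3^r$), leave it untouched. The product of modified shears yields $\tilde\alpha_{(p)}\in\he{A}^{+}$ and $\tilde\beta_{(p)}\in\he{B}^{+}$ whose actions on $\Gamma'_{(p)}$ coincide with $\alpha_{(p)},\beta_{(p)}$, while their actions on $\Gamma'_{(q)}$ for $q\neq p$ are trivial. Put $\alpha:=\tilde\alpha_{(2)}\tilde\alpha_{(3)}\in\he{A}^{+}$ and $\beta:=\tilde\beta_{(2)}\tilde\beta_{(3)}\in\he{B}^{+}$; these are invertible since $\he{A}^{+}$ and $\he{B}^{+}$ are closed under multiplication (a product of block-upper-triangular matrices whose top-left blocks are products of shears is again of the same form).

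\textbf{Step 2 (Verification).} Write $M=T(M_{(2)},M_{(3)})$ and $N=T(N_{(2)},N_{(3)})$. Since $L=(L_2,L_3)$ is a bijection, it suffices to show $L_p(\beta M\alpha^{-1})=L_p(N)$ for $p=2,3$. Applying $L_2$, using that $L$ is a ring isomorphism compatible with matrix multiplication and that $\tilde\alpha_{(3)},\tilde\beta_{(3)}$ act trivially on $\Gamma'_{(2)}$, one obtains $L_2(\beta M\alpha^{-1})=\beta_{(2)}L_2(M)\alpha_{(2)}^{-1}=L_2(N)$ by hypothesis; the same argument gives $L_3(\beta M\alpha^{-1})=L_3(N)$. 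Hence $\beta M\alpha^{-1}=N$, establishing $T(M_{(2)},M_{(3)})\cong T(N_{(2)},N_{(3)})$ in $(\mathscr{A}',\mathcal{G}')$.

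The hard part will be the triviality claim on the ``wrong'' primary component. For shears with $\mathbb{Z}$- or $2\mathbb{Z}$-valued entries this is automatic from the CRT choice forcing divisibility by $3$ (resp.\ by $8$). For the finite-order cells ($\mathbb{Z}/2$ or $\mathbb{Z}/3^r$) it must be verified cell-by-cell through the composition rules of Tables~7--9, including the idiosyncratic product rules of Remark~5.6(3) inherited from Table~3 for morphisms involving $C_{\eta}$. All such cases ultimately reduce to the vanishing $\mathbb{Z}/2\otimes\mathbb{Z}/3^r=0$, but the special rules governing the $(S^{n+2},C_{\eta})$-type products---in particular those arising from the injection $\mathcal{F}$ defined in Section~4---demand individual inspection before the primary components can be decoupled.
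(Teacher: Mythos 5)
Your proposal and the paper's proof are the same CRT-lifting argument in different packaging. The paper's Lemma~\ref{lemma6.2} produces a single invertible matrix $U$ (resp.\ $W$) congruent to both witness top-left blocks modulo $8$ and modulo $3$, then splices it onto the Moore-space blocks of $\alpha_3,\beta_3$ to form $\alpha,\beta$; you instead modify the shear factorizations of each witness so that $\tilde\alpha_{(p)},\tilde\beta_{(p)}$ act trivially on the opposite primary component, and multiply. The two routes are equivalent, and yours has the minor advantage of never pairing a shear from the $2$-primary witness with one from the $3$-primary witness, so the $SL_2(\mathbb{Z})$ construction in the paper's Statement~(2) case~(ii) is never needed.

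Two points need tightening. First, your construction of $\tilde\alpha_{(p)},\tilde\beta_{(p)}$ should commit explicitly to identity/zero in the blocks outside $U_1$ and the $\mathcal{V}$-part; that is what makes ``trivial on $\Gamma'_{(q)}$'' literally true, while the action on $\Gamma'_{(p)}$ is unchanged because the discarded blocks of $\alpha_{(p)},\beta_{(p)}$ hit only cells of $\Gamma'_{(p)}$ that vanish. Second, the step you defer as ``the hard part'' --- that the primary decomposition commutes with the bimodule action, i.e.\ the identity underlying $\beta\,T(M_{(2)},M_{(3)})\,\alpha = T(\beta M_{(2)}\alpha,\,\beta M_{(3)}\alpha)$ --- is precisely where the $\mathcal{F}$-twisted $C_\eta$ products and the factor-$6$ rule $(g)$ enter (and decouple, since $6\equiv 2\pmod 4$, $6\equiv 0\pmod 3$, and every other mixed composition lands in a cell that vanishes in one primary component). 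The paper also asserts this line without detailed verification, so your sketch is at the same level of rigor as the paper's proof on this point, but unlike the paper you flag it and then leave it entirely undone, which means your write-up is not yet a complete proof.
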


\begin{proof}
By the condition of the Theorem, we get that $\beta_2M_{(2)}\alpha_2=N_{(2)}$,  $\beta_3M_{(3)}\alpha_3=N_{(3)}$
where $\alpha_{2}, \alpha_3\in \he{A}^+$ and $\beta_2, \beta_3 \in \he{B}^+$.
Let $$\alpha_{2}=\begin{tabular}{|c|c|}
\hline
$U_1$& $U_2$ \\
\hline
0 & $U_4$ \\
 \hline
\end{tabular} ~~,
 ~~\alpha_{3}=\begin{tabular}{|c|c|}
\hline
$U'_1$& $U'_2$ \\
\hline
0 & $U'_4$ \\
 \hline
\end{tabular}~~,$$
where $U_1 , U'_1 \in Hos'(\mathcal{A},\mathcal{A})(S^{n+2},S^{n+3})$ .
  $$\beta_2=\begin{tabular}{|c|c|}
   \hline
   $V_1$& 0 \\
   \hline
    $V_3$ & $V_4$ \\
     \hline
     \end{tabular}~~,~~\beta_3=\begin{tabular}{|c|c|}
   \hline
   $V'_1$& 0 \\
   \hline
    $V'_3$ & $V'_4$ \\
     \hline
     \end{tabular}$$
where $V_1 , V'_1 \in Hos'(\mathcal{B},\mathcal{B})(S^{n},S^{n+1},S^{n+2},C_{\eta}:{\s n},C_{\eta}:{\s n+2})$.
 $\mathcal{V}$-part of $V_{1}$ and $V'_{1}$ are denoted by $W_{1}$ and $W'_{1}$ respectively.

\begin{lemma}\label{lemma6.2}
For any $$U_1 , U'_1 \in Hos'(\mathcal{A},\mathcal{A})(S^{n+2},S^{n+3})=
\begin{tabular}{|c|c|}
\hline
$\mathbb{Z}$ &  $\mathbb{Z}/2$ \\
\hline
0 &  $\mathbb{Z}$ \\
 \hline
 \end{tabular}$$
  and
$$W_1, W'_1\in \mathcal{V}=\begin{tabular}{|c|c|c|c|}
\hline
$\mathbb{Z}$ & $\mathbb{Z}/2$ & $\mathbb{Z}/2$ & $2\mathbb{Z}$ \\
\hline
0 & $\mathbb{Z}$ & $\mathbb{Z}/2$ & 0 \\
 \hline
  0& 0 & $\mathbb{Z}$ & 0 \\
  \hline
  $\mathbb{Z}$ & 0 & $\mathbb{Z}/2$ & $\mathbb{Z}$ \\
  \hline
  \end{tabular}$$
where $U_1$, $U'_1$, $W_1$ and $W'_1$ are products of elementary matrices $I+aE_{ij}~(i\neq j,~ a\in\mathbb{Z})$, orders of $U_1$, $U'_1$ (respectively orders of $W_1$, $W'_1$) are the same,
there exist invertible matrices
$$ U\in Hos'(\mathcal{A},\mathcal{A})(S^{n+2},S^{n+3}) ~,~~~~~ W\in \mathcal{V} $$  such that
$$\left\{
  \begin{array}{l}
    U\equiv U_1 ~(mod~8)  \\
    U\equiv U'_1 ~(mod~3)
  \end{array}
\right.    ~~~\text{and}~~~\left\{
  \begin{array}{l}
    W\equiv W_1 ~(mod~8)  \\
    W\equiv W'_1 ~(mod~3)
  \end{array}
\right. .$$
 Note. For any abelian group $A$, $a,b\in A$, and positive integer $k$, $a\equiv b~(mod~k)$ means that the image of $a$ and $b$ are equal under the quotient homomorphism $A \rightarrow A/kA$.
 \end{lemma}

We give some remarks before the proof of this lemma.

 Using $W$ and $U$ in Lemma \ref{lemma6.2}, let

$$      V=\begin{tabular}{c|ccc|c|}
            \multicolumn{4}{c} {} &  \multicolumn{1}{c} {$C_{\eta}:\s{n+2}$} \\
           \cline{2-5}
          &\multicolumn{3}{c|} {~~~~$\Large{W}\rule[20pt]{0pt}{3pt} $~~~~} &  0\\
           &&&&\\
          \cline{2-5}
        $C_{\eta}:\s{n+2}$   & \multicolumn{3}{c|} {0} & \scriptsize{$V_{55}$} \\
        \cline{2-5}
       \end{tabular}$$  where $V_{55}$ is  an invertible  matrix that makes $V$  be an element in
       \newline $Hos'(\mathcal{B}, \mathcal{B})(S^{n} , S^{n+1} , S^{n+2} , C_{\eta}:{\s{n}} , C_{\eta}:{\s{n+2}})$. And let
$$ \alpha=\begin{tabular}{|c|c|}
\hline
$U$& $U'_2$ \\
\hline
0 & $U'_4$ \\
 \hline
\end{tabular}  ~~~ \text{and}~~~\beta=\begin{tabular}{|c|c|}
   \hline
   $V$& 0 \\
   \hline
    $V'_3$ & $V'_4$ \\
     \hline
     \end{tabular} . $$
Then $\alpha$ and $\beta$  are invertible  and $\beta M_{(2)}\alpha=\beta_2 M_{(2)}\alpha_2=N_{(2)}$,  $\beta M_{(3)}\alpha=\beta_3 M_{(3)}\alpha_3=N_{(3)}$.
Since $\beta T(M_{(2)} , M_{(3)})\alpha =T(\beta M_{(2)}\alpha, \beta M_{(3)}\alpha)=T(N_{(2)}, N_{(3)} )$.
Thus :
$$T(M_{(2)}, M_{(3)})\cong T( N_{(2)}, N_{(3)}) ~ \text{in matrix problem}~ (\mathscr{A}', \mathcal{G}'). $$
\end{proof}

\begin{proof}[The proof of Lemma~\ref{lemma6.2}]

  \begin{statement}[1]~~For any $A, B\in SL_n(\mathbb{Z})$, there is a $C\in SL_n(\mathbb{Z})$, such that
   $$C\equiv A~(mod~8) ~~~~ \text{and}~~~ C\equiv B~(mod~3).$$
  \end{statement}
The  statement(1) follows from the following two conclusions in \cite{RefAlg} :
$$SL_n(\mathbb{Z})\xlongrightarrow {q} SL_n(\mathbb{Z}/24)~~\text{is surjective} ;$$
$$ SL_n(\mathbb{Z}/24)\xlongrightarrow {(q_1,q_2)}SL_n(\mathbb{Z}/8)\times SL_n(\mathbb{Z}/3)~~~\text{is isomorphic},$$
where $q , q_1 ,q_2$ are quotient maps.
 \\
 \\
 \begin{statement}[2]~~Suppose that
 $$
  A=I+aE_{ij}=\left(
    \begin{array}{ccccc}
      1 &  &  & &  \\
       &  &  & a_{ij} & \\
       &  & \ddots &  & \\
       &  &  &  & \\
       &  &  &  & 1 \\
    \end{array}
  \right), ~B=I+bE_{st}=\left(
    \begin{array}{ccccc}
      1 &  &  & &  \\
       &  &  & b_{st} & \\
       &  & \ddots &  & \\
       &  &  &  & \\
       &  &  &  & 1 \\
    \end{array}
  \right)~~\left(
             \begin{array}{c}
               i\neq j, a\in \mathbb{Z}\\
               s\neq t, b\in \mathbb{Z} \\
             \end{array}
           \right)
  $$
 are any two elementary matrices  in
 $\mathcal{V}$  (resp. $Hos'(\mathcal{A},\mathcal{A})(S^{n+2} ,S^{n+3})$ ) of the same order, then there is
 an invertible block matrix $C$ in $\mathcal{V}$ (resp. $Hos'(\mathcal{A},\mathcal{A})(S^{n+2} ,S^{n+3})$) such that  $C\equiv A ~(mod~8), C\equiv B~(mod~3)$.

 \end{statement}
\begin{proof}[The proof of statement(2)]
 We only prove the case for $A, B\in \mathcal{V}$  since the remaining case is much more easier.

  Note if $a_{ij}$ (resp. $b_{st}$) is from $\mathbb{Z}$ or $2\mathbb{Z}$ block, then $a_{ij}=a$ (resp. $b_{st}=b$); if $a_{ij}$ (resp. $b_{st}$) is from $\mathbb{Z}/2$ block, then $a_{ij}$ (resp. $b_{st}$) is the image of $a$ (resp. $b$) under the quotient map $\mathbb{Z}\rightarrow \mathbb{Z}/2$.
 \begin{itemize}
  \item If $b_{st}$ is from $\mathbb{Z}/2$ block, then $ b_{st}\equiv 0~(mod~3)$. For $a_{ij}$ from $\mathbb{Z}/2$ block, take $C=A$. For $a_{ij}$ from $ \mathbb{Z}$ or $2\mathbb{Z}$ block, there is a $c\in \mathbb{Z}$, such that $c\equiv a~ (mod~8)$ and $c\equiv 0 ~(mod~3)$. Take $C=I+cE_{ij}$.
  \item If $b_{st}$ is from $\mathbb{Z}$ or $2\mathbb{Z}$ block,
    \begin{itemize}
      \item [(i)]  $i\neq t$  or $j\neq s$
      \newline there is a  $d\in \mathbb{Z}$  such that  $d\equiv 0 ~ (mod~8)$ and $d\equiv b ~(mod~3)$.
         \newline ~~~~~If $a_{ij}$ is from $\mathbb{Z}/2$ block, take integer $c$ such that $c\equiv a~(mod~2)$;
         \newline ~~~~~If $a_{ij}$ is from $\mathbb{Z}$ or $2\mathbb{Z}$ block, take integer $c$ such that $c\equiv a~(mod~8)$ and $c\equiv 0 ~(mod~3)$.
        Then take $C=I+cE_{ij}+dE_{st}=(I+cE_{ij})(I+dE_{st})$  which is invertible in $\mathcal{V}$.
      \item [(ii)]  $i=t$ and $j=s$
      \newline In this case $a_{ij}$ must come from $\mathbb{Z}$ or $2\mathbb{Z}$ block.
       Suppose $i>j$. By statement (1), there is a matrix
  $$ X=\left(
   \begin{array}{cc}
     x_{11} &  x_{12} \\
      x_{21} & x_{22} \\
       \end{array}
        \right)\in SL_2(\mathbb{Z}) ~~~\text{ such that}$$
  $$X\equiv \left(
   \begin{array}{cc}
     1 & 0 \\
      a & 1 \\
       \end{array}
        \right) ~(mod~8)~~~\text{and}~~~X\equiv \left(
   \begin{array}{cc}
     1 & b \\
      0 & 1 \\
       \end{array}
        \right)~(mod~3).$$
     Take
     $$C=I-(1-x_{11})E_{jj}-(1-x_{22})E_{ii}+x_{12}E_{ji}+x_{21}E_{ij}.$$   Note that $x_{12}\in 2\mathbb{Z}$ and if $a\in 2\mathbb{Z}$ then $x_{21}\in 2\mathbb{Z}$, so $C$ is an element in $\mathcal{V}$.  It is easy to check that $C$ is invertible in
      $\mathcal{V}$ and $C\equiv A~(mod~8)$, $C\equiv B~(mod~3)$.
     The proof of the case $i<j$ is similar.
  \end{itemize}

\end{itemize}
\end{proof}
Now the proof of the Lemma~\ref{lemma6.2} is easily obtained  by statement (2).
\end{proof}
\

\subsection{The indecomposable isomorphic classes of $(\mathscr{A}'_{(2)}, \mathcal{G}'^+)$  and $(\mathscr{A}'_{(3)}, \mathcal{G}'^+)$ }
\label{subsec:6.2}

   ~~~~Note that the matrix problem $(\mathscr{A}'_{(2)}, \mathcal{G}')$ is essentially the same as the 2-primary component of the matrix problem  $(\mathscr{A}^{0}, \mathcal{G}^{0}).$  Thus we can get the list (denoted by \textbf{List(**)}) of the indecomposable isomorphic classes of $(\mathscr{A}'_{(2)}, \mathcal{G}')$ from the \textbf{List(*)}  by taking $v$ to its image of the quotient map $\mathbb{Z}/24\rightarrow \mathbb{Z}/8$ or  $\mathbb{Z}/12\rightarrow \mathbb{Z}/4$. It means \textbf{List(**)} are just the same as the \textbf{List(*)} except that the ranges of $v$ are different . That is
\begin{itemize}
  \item  $v\in \{1\}\subset \mathbb{Z}/4$ for the case (I);
  \item  $v\in \{1,2\}\subset \mathbb{Z}/8$ for the case (1),(2),(4),(5),(7) of (II) ;
  \item  $v\in \{1,2\}\subset \mathbb{Z}/4$ for the case (3),(6) of (II) ;
  \item  $v\in \{1,2,3,4\}\subset \mathbb{Z}/8$ for the case (III) .
\end{itemize}

From the \textbf{The differences between $(\mathscr{A}'_{(2)}, \mathcal{G}'^+)$ and $(\mathscr{A}'_{(2)}, \mathcal{G}')$}, we know that $M\in \Gamma'(\mathcal{A},\mathcal{B})_{(2)}$ is indecomposable in $(\mathscr{A}'_{(2)}, \mathcal{G}'^+)$ if and only if it is indecomposable in $(\mathscr{A}'_{(2)}, \mathcal{G}')$. But non-isomorphic matrices of $(\mathscr{A}'_{(2)}, \mathcal{G}'^+)$ may be isomorphic in $(\mathscr{A}'_{(2)}, \mathcal{G}')$.

For example,
$\begin{tabular}{c|c|}
   \multicolumn{1}{c}{}&\multicolumn{1}{c} {$\s{S^{n+3}}$}\\
       \cline{2-2}
     $\s{S^{n}}$ & $\s{1}$ \\
      \cline{2-2}
    \end{tabular}~~ $ and $ \begin{tabular}{c|c|}
   \multicolumn{1}{c}{}&\multicolumn{1}{c} {$\s{S^{n+3}}$}\\
       \cline{2-2}
     $\s{S^{n}}$ & $\s{-1}$ \\
      \cline{2-2}
    \end{tabular}~~$   (where 1, -1 $\in \mathbb{Z}/8$ ), which are isomorphic under $ \mathcal{G}'$, are not isomorphic under $ \mathcal{G}'^+$.
\\

 Here are the list of the indecomposable isomorphic classes of $(\mathscr{A}'_{(2)}, \mathcal{G}'^+)$ :

\begin{itemize}
  \item  [] \textbf{List(2)} :
  \item [(I)] $\begin{tabular}{c|c|}
   \multicolumn{1}{c}{}&\multicolumn{1}{c} {$\s{S^{n+3}}$}\\
       \cline{2-2}
     $\s{S^{n+2}}$ & 1 \\
       \cline{2-2}
      $\s{C_{\eta}:{n}}$ & 1 \\
      \quad ${\s{n+2}}$ & 0 \\
        \cline{2-2}
     \end{tabular}$ ;
  \item [(II)]
       $(1)~\begin{tabular}{c|c|c|}
   \multicolumn{1}{c}{}&\multicolumn{1}{c} {$\s{S^{n+2}}$}&\multicolumn{1}{c} {$\s{S^{n+3}}$}\\
       \cline{2-3}
     $\s{S^{n}}$ & 1& $v$ \\
        \cline{2-3}
      $\s{S^{n+1}}$ & 0& 1 \\
       \cline{2-3}
     \end{tabular}$ ~~;~~$(2)~\begin{tabular}{c|c|c|}
   \multicolumn{1}{c}{}&\multicolumn{1}{c} {$\s{S^{n+2}}$}&\multicolumn{1}{c} {$\s{S^{n+3}}$}\\
       \cline{2-3}
     $\s{S^{n}}$ & 1& $v$ \\
        \cline{2-3}
      $\s{S^{n+2}}$ & 0& 1 \\
       \cline{2-3}
     \end{tabular}$~~;~~$(4)~\begin{tabular}{c|c|c|}
      \multicolumn{1}{c}{}&\multicolumn{1}{c} {$\s{S^{n+2}}$}&\multicolumn{1}{c} {$\s{S^{n+3}}$}\\
       \cline{2-3}
     $\s{S^{n}}$ & 1& $v$ \\
      \cline{2-3}
     \end{tabular}$
     \newline

     $(5)~\begin{tabular}{c|c|}
   \multicolumn{1}{c}{}&\multicolumn{1}{c} {$\s{S^{n+3}}$}\\
       \cline{2-2}
     $\s{S^{n}}$ & $v$ \\
       \cline{2-2}
      $\s{S^{n+1}}$ & $1$\\
        \cline{2-2}
     \end{tabular} $~~;~~ $(7)~\begin{tabular}{c|c|}
   \multicolumn{1}{c}{}&\multicolumn{1}{c} {$\s{S^{n+3}}$}\\
       \cline{2-2}
     $\s{S^{n}}$ & $v$ \\
       \cline{2-2}
      $\s{S^{n+2}}$ & $1$\\
        \cline{2-2}
     \end{tabular} $ ,
     \\
     \\
     \\
  where $v\in\{1,2,3\}\subset \mathbb{Z}/8$ for the cases (1),(2),(4),(5),(7).

     $(3)~\begin{tabular}{c|c|}
   \multicolumn{1}{c}{}&\multicolumn{1}{c} {$\s{S^{n+3}}$}\\
       \cline{2-2}
     $\s{S^{n+2}}$ & 1 \\
       \cline{2-2}
      $\s{C_{\eta}:{n}}$ & $v$\\
      \quad ${\s{n+2}}$ & 0 \\
        \cline{2-2}
     \end{tabular}$~~;~~
      $(6)~\begin{tabular}{c|c|}
   \multicolumn{1}{c}{}&\multicolumn{1}{c} {$\s{S^{n+3}}$}\\
       \cline{2-2}
   $\s{C_{\eta}:{n}}$ & $v$\\
      \quad ${\s{n+2}}$ & 0 \\
        \cline{2-2}
     \end{tabular} $ ,
     \\ \\  \\
      where $v\in\{1,2,3\}\subset \mathbb{Z}/4$ for the cases (3) (6).

        \item [(III)]
         $\begin{tabular}{c|c|}
   \multicolumn{1}{c}{}&\multicolumn{1}{c} {$\s{S^{n+3}}$}\\
       \cline{2-2}
     $\s{S^{n}}$ & $v$ \\
      \cline{2-2}
    \end{tabular} $  ~~~~~~~~where $v\in\{1,2,\cdots,7\}\subset \mathbb{Z}/8$.

         \item [(IV)] $(1)\begin{tabular}{c|c|}
   \multicolumn{1}{c}{}&\multicolumn{1}{c} {$\s{S^{n+2}}$}\\
       \cline{2-2}
     $\s{S^{n+1}}$ & $1$ \\
      \cline{2-2}
    \end{tabular}$~~;~~$(2)~\begin{tabular}{c|c|}
   \multicolumn{1}{c}{}&\multicolumn{1}{c} {$\s{S^{n+3}}$}\\
       \cline{2-2}
     $\s{S^{n+2}}$ & $1$ \\
      \cline{2-2}
    \end{tabular}~~;$

    $(3)~\begin{tabular}{c|c|}
   \multicolumn{1}{c}{}&\multicolumn{1}{c} {$\s{S^{n+2}}$}\\
       \cline{2-2}
     $\s{S^{n}}$ & $1$ \\
      \cline{2-2}
    \end{tabular}~~; $~~$(4)~\begin{tabular}{c|c|}
   \multicolumn{1}{c}{}&\multicolumn{1}{c} {$\s{S^{n+3}}$}\\
       \cline{2-2}
     $\s{S^{n+1}}$ & $1$ \\
      \cline{2-2}
    \end{tabular}~~. $
   \end{itemize}

   For the matrix problem  $(\mathscr{A}'_{(3)}, \mathcal{G}'^+)$, the indecomposable isomorphic classes are given by

   \begin{itemize}
     \item []  \textbf{List(3)} :
     \item []   $\begin{tabular}{c|c|}
   \multicolumn{1}{c}{}&\multicolumn{1}{c} {$\s{S^{n+3}}$}\\
       \cline{2-2}
     $\s{S^{n}}$ &1\\
      \cline{2-2}
    \end{tabular} $~~;~~ $\begin{tabular}{c|c|}
   \multicolumn{1}{c}{}&\multicolumn{1}{c} {$\s{S^{n+3}}$}\\
       \cline{2-2}
     $\s{S^{n}}$ & -1 \\
      \cline{2-2}
    \end{tabular} $~~;~~$\begin{tabular}{c|c|}
   \multicolumn{1}{c}{}&\multicolumn{1}{c} {$\s{S^{n+3}}$}\\
       \cline{2-2}
      $\s{C_{\eta}:{n}}$ & 1\\
      \quad ${\s{n+2}}$ & 0 \\
        \cline{2-2}
     \end{tabular}$~~;~~$\begin{tabular}{c|c|}
   \multicolumn{1}{c}{}&\multicolumn{1}{c} {$\s{S^{n+3}}$}\\
       \cline{2-2}
      $\s{C_{\eta}:{n}}$ & -1\\
      \quad ${\s{n+2}}$ & 0 \\
        \cline{2-2}
     \end{tabular}$

     \item []
     $\begin{tabular}{c|c|}
   \multicolumn{1}{c}{}&\multicolumn{1}{c}{$\s{S^{n+3}}$}\\
       \cline{2-2}
     $\s{M_{3^{s}}^{n}}$ & 1 \\
      \cline{2-2}
    \end{tabular} $~~;~~$\begin{tabular}{c|c|}
   \multicolumn{1}{c}{}&\multicolumn{1}{c}{$\s{M_{3^{r}}^{n+2}}$}\\
       \cline{2-2}
     $\s{M_{3^{s}}^{n}}$ & 1 \\
      \cline{2-2}
    \end{tabular} $~~;~~$\begin{tabular}{c|c|}
   \multicolumn{1}{c}{}&\multicolumn{1}{c}{$\s{M_{3^{r}}^{n+2}}$}\\
       \cline{2-2}
     $\s{S^n}$ & 1 \\
      \cline{2-2}
    \end{tabular} $~~;~~$\begin{tabular}{c|c|}
   \multicolumn{1}{c}{}&\multicolumn{1}{c} {$\s{M_{3^{r}}^{n+2}}$}\\
       \cline{2-2}
      $\s{C_{\eta}:{n}}$ & 1\\
      \quad ${\s{n+2}}$ & 0 \\
        \cline{2-2}
     \end{tabular}$ .
    \\ \\ \\
     where $1$, $-1$ $\in \mathbb{Z}/3$  and $s , n \in \mathbb{N}_+$.
    \end{itemize}

  \subsection{The indecomposable isomorphic classes of $(\mathscr{A}', \mathcal{G}')$  and $\mathbf{F}_{n(2)}^{4} (n\geq5)$ }
\label{subsec:6.3}
By theorem~\ref{theorem6.1}, for any $M\in \Gamma'(\mathcal{A},\mathcal{B})$, we have $M\cong T(N_2 ,N_3)$ in matrix problem $(\mathscr{A}', \mathcal{G}')$ for some $N_2\in \Gamma'(\mathcal{A},\mathcal{B})_{(2)}$ and  $N_3\in \Gamma'(\mathcal{A},\mathcal{B})_{(3)}$, where
$$N_2=\bigoplus_{i}N_2^i\bigoplus \mathbf{O}_{2}, ~N_2^i~~  \text{is an indecomposable matrix listed in the \textbf{List(2)} for every}~i.$$
$$N_3=\bigoplus_{j}N_3^j\bigoplus \mathbf{O}_{3}, ~N_3^j~~  \text{is an indecomposable matrix listed in the \textbf{List(3)} for every}~j.$$
$\mathbf{O}_{2}$ and $\mathbf{O}_{3}$ are direct products of  some zero matrices.

Since $N_3$ is a matrix of which every row and every column have at most one nonzero entry, it enables us to select from $T(N_2 ,N_3)$ a set of indecomposable  matrices as follows which covers all the indecomposable isomorphic classes of matrix problem $(\mathscr{A}', \mathcal{G}')$.

\begin{itemize}
  \item [(I)]
    \begin{itemize}
    \item [] \begin{footnotesize}$\begin{tabular}{c|c|}
   \multicolumn{1}{c}{}&\multicolumn{1}{c} {$S^{n+3}$}\\
       \cline{2-2}
     $S^{n+2}$ & 1 \\
       \cline{2-2}
      $C_{\eta}:\s{{n}}$ & $T_4(a,b)$\\
      \quad ${\s{n+2}}$ & 0 \\
        \cline{2-2}
     \end{tabular}$\end{footnotesize} ~;
     ~$\begin{footnotesize} \begin{tabular}{c|c|c|}
   \multicolumn{1}{c}{}&\multicolumn{1}{c} {$S^{n+3}$}&\multicolumn{1}{c} {$\mo{r}{n+2}$}\\
       \cline{2-3}
     $S^{n+2}$ & 1 &0\\
       \cline{2-3}
      $C_{\eta}:\s{n}$ & $T_4(1,0)$&1\\
      \quad ${\s{n+2}}$ & 0 &0\\
        \cline{2-3}
     \end{tabular}\end{footnotesize}$ ~;

    \item []  $\begin{footnotesize}\begin{tabular}{c|c|}
   \multicolumn{1}{c}{}&\multicolumn{1}{c} {$S^{n+3}$}\\
       \cline{2-2}
     $S^{n+2}$ & 1 \\
       \cline{2-2}
      $C_{\eta}:\s{n}$ & $T_4(1,0)$\\
      \quad ${\s{n+2}}$ & 0 \\
        \cline{2-2}
       $\mo{s}{n}$ & 1\\
        \cline{2-2}
     \end{tabular}\end{footnotesize}$ ~;~$\begin{footnotesize}\begin{tabular}{c|c|c|}
   \multicolumn{1}{c}{}&\multicolumn{1}{c} {$S^{n+3}$}&\multicolumn{1}{c} {$\mo{r}{n+2}$}\\
       \cline{2-3}
     $S^{n+2}$ & 1 &0\\
       \cline{2-3}
      $C_{\eta}:\s{n}$ & $T_4(1,0)$&1\\
      \quad ${\s{n+2}}$ & 0 &0\\
        \cline{2-3}
         $\mo{s}{n}$ & 1& 0\\
        \cline{2-3}
     \end{tabular}\end{footnotesize}~~,$
    \\ \\ \\
  where $(a,b)\in \mathbb{Z}/4\times \mathbb{Z}/3$ such that $a\in \{0 ,1\}$ and $(a,b)\neq (0,0)$ . $s ,r \in \mathbb{N}_+$.

  \end{itemize}

     \item [(II)]
   \begin{itemize}
     \item [(1)] $\begin{footnotesize}\begin{tabular}{c|c|c|}
   \multicolumn{1}{c}{}&\multicolumn{1}{c} {$S^{n+2}$}&\multicolumn{1}{c} {$S^{n+3}$}\\
       \cline{2-3}
     $S^{n}$ & 1& $T_8(a,b)$ \\
        \cline{2-3}
      $S^{n+1}$ & 0& 1 \\
       \cline{2-3}
     \end{tabular}\end{footnotesize}$ ~~;~~   $\begin{footnotesize}\begin{tabular}{c|c|c|c|}
   \multicolumn{1}{c}{}&\multicolumn{1}{c} {$S^{n+2}$}&\multicolumn{1}{c} {$S^{n+3}$}&\multicolumn{1}{c} {$\mo{r}{n+2}$}\\
       \cline{2-4}
     $S^{n}$ & 1& $T_8(a,0)$ & 1\\
        \cline{2-4}
      $S^{n+1}$ & 0& 1 & 0\\
       \cline{2-4}
     \end{tabular}\end{footnotesize}$~~;

     \item [] $\begin{footnotesize}\begin{tabular}{c|c|c|}
   \multicolumn{1}{c}{}&\multicolumn{1}{c} {$S^{n+2}$}&\multicolumn{1}{c} {$S^{n+3}$}\\
       \cline{2-3}
     $S^{n}$ & 1& $T_8(a,0)$ \\
        \cline{2-3}
      $S^{n+1}$ & 0& 1 \\
       \cline{2-3}
        $\mo{s}{n}$ & 0& 1 \\
       \cline{2-3}
     \end{tabular}\end{footnotesize}$ ~~;~~   $\begin{footnotesize}\begin{tabular}{c|c|c|c|}
   \multicolumn{1}{c}{}&\multicolumn{1}{c} {$S^{n+2}$}&\multicolumn{1}{c} {$S^{n+3}$}&\multicolumn{1}{c} {$\mo{r}{n+2}$}\\
       \cline{2-4}
     $S^{n}$ & 1& $T_8(a,0)$ & 1\\
        \cline{2-4}
      $S^{n+1}$ & 0& 1 & 0\\
       \cline{2-4}
       $\mo{s}{n}$ & 0& 1&0 \\
      \cline{2-4}
     \end{tabular}\end{footnotesize}$~~;
    \end{itemize}

   \begin{itemize}
     \item [(2)] $\begin{footnotesize}\begin{tabular}{c|c|c|}
   \multicolumn{1}{c}{}&\multicolumn{1}{c} {$S^{n+2}$}&\multicolumn{1}{c} {$S^{n+3}$}\\
       \cline{2-3}
     $S^{n}$ & 1& $T_8(a,b)$ \\
        \cline{2-3}
      $S^{n+2}$ & 0& 1 \\
       \cline{2-3}
     \end{tabular}\end{footnotesize}$ ~~;~~   $\begin{footnotesize}\begin{tabular}{c|c|c|c|}
   \multicolumn{1}{c}{}&\multicolumn{1}{c} {$S^{n+2}$}&\multicolumn{1}{c} {$S^{n+3}$}&\multicolumn{1}{c} {$\mo{r}{n+2}$}\\
       \cline{2-4}
     $S^{n}$ & 1& $T_8(a,0)$ & 1\\
        \cline{2-4}
      $S^{n+2}$ & 0& 1 & 0\\
       \cline{2-4}
     \end{tabular}\end{footnotesize}$~~;

     \item [] $\begin{footnotesize}\begin{tabular}{c|c|c|}
   \multicolumn{1}{c}{}&\multicolumn{1}{c} {$S^{n+2}$}&\multicolumn{1}{c} {$S^{n+3}$}\\
       \cline{2-3}
     $S^{n}$ & 1& $T_8(a,0)$ \\
        \cline{2-3}
      $S^{n+2}$ & 0& 1 \\
       \cline{2-3}
        $\mo{s}{n}$ & 0& 1 \\
       \cline{2-3}
     \end{tabular}\end{footnotesize}$ ~~;~~   $\begin{footnotesize}\begin{tabular}{c|c|c|c|}
   \multicolumn{1}{c}{}&\multicolumn{1}{c} {$S^{n+2}$}&\multicolumn{1}{c} {$S^{n+3}$}&\multicolumn{1}{c} {$\mo{r}{n+2}$}\\
       \cline{2-4}
     $S^{n}$ & 1& $T_8(a,0)$ & 1\\
        \cline{2-4}
      $S^{n+2}$ & 0& 1 & 0\\
       \cline{2-4}
       $\mo{s}{n}$ & 0& 1&0 \\
      \cline{2-4}
     \end{tabular}\end{footnotesize}$~~;
    \end{itemize}

   \begin{itemize}
     \item [(3)] $\begin{footnotesize}\begin{tabular}{c|c|}
   \multicolumn{1}{c}{}&\multicolumn{1}{c} {$S^{n+3}$}\\
       \cline{2-2}
     $S^{n+1}$ & 1 \\
       \cline{2-2}
      $C_{\eta}:\s{n}$ & $T_4(a,b)$\\
      \quad ${\s{n+2}}$ & 0 \\
        \cline{2-2}
     \end{tabular}\end{footnotesize}$ ~~;~~$\begin{footnotesize}\begin{tabular}{c|c|c|}
   \multicolumn{1}{c}{}&\multicolumn{1}{c} {$S^{n+3}$}&\multicolumn{1}{c} {$\mo{r}{n+2}$}\\
       \cline{2-3}
     $S^{n+1}$ & 1 &0\\
       \cline{2-3}
      $C_{\eta}:\s{n}$ & $T_4(a,0)$&1\\
      \quad ${\s{n+2}}$ & 0 &0\\
        \cline{2-3}
     \end{tabular}\end{footnotesize}$~;

     \item [] $\begin{footnotesize}\begin{tabular}{c|c|}
   \multicolumn{1}{c}{}&\multicolumn{1}{c} {$S^{n+3}$}\\
       \cline{2-2}
     $S^{n+2}$ & 1 \\
       \cline{2-2}
      $C_{\eta}:\s{n}$ & $T_4(a,0)$\\
      \quad ${\s{n+2}}$ & 0 \\
        \cline{2-2}
       $\mo{s}{n}$ & 1\\
        \cline{2-2}
     \end{tabular}\end{footnotesize}$ ~~;~~$\begin{footnotesize}\begin{tabular}{c|c|c|}
   \multicolumn{1}{c}{}&\multicolumn{1}{c} {$S^{n+3}$}&\multicolumn{1}{c} {$\mo{r}{n+2}$}\\
       \cline{2-3}
     $S^{n+2}$ & 1 &0\\
       \cline{2-3}
      $C_{\eta}:\s{n}$ & $T_4(a,0)$&1\\
      \quad ${\s{n+2}}$ & 0 &0\\
        \cline{2-3}
         $\mo{s}{n}$ & 1& 0\\
        \cline{2-3}
     \end{tabular}\end{footnotesize}$~;
    \end{itemize}

     \begin{itemize}
       \item [(4)]$\begin{footnotesize}\begin{tabular}{c|c|c|}
      \multicolumn{1}{c}{}&\multicolumn{1}{c} {$S^{n+2}$}&\multicolumn{1}{c} {$S^{n+3}$}\\
       \cline{2-3}
     $S^{n}$ & 1& $T_8(a,b)$  \\
      \cline{2-3}
     \end{tabular}\end{footnotesize}$~~;~~$\begin{footnotesize}\begin{tabular}{c|c|c|c|}
      \multicolumn{1}{c}{}&\multicolumn{1}{c} {$S^{n+2}$}&\multicolumn{1}{c} {$S^{n+3}$}&\multicolumn{1}{c} {$\mo{r}{n+2}$}\\
       \cline{2-4}
     $S^{n}$ & 1& $T_8(a,0)$ &1 \\
      \cline{2-4}
     \end{tabular}\end{footnotesize}$~~;

       \item []$\begin{footnotesize}\begin{tabular}{c|c|c|}
      \multicolumn{1}{c}{}&\multicolumn{1}{c} {$S^{n+2}$}&\multicolumn{1}{c} {$S^{n+3}$}\\
       \cline{2-3}
     $S^{n}$ & 1& $T_8(a,0)$  \\
      \cline{2-3}
     $\mo{s}{n}$ & 0& 1 \\
      \cline{2-3}
     \end{tabular}\end{footnotesize}$~~;~~$\begin{footnotesize}\begin{tabular}{c|c|c|c|}
      \multicolumn{1}{c}{}&\multicolumn{1}{c} {$S^{n+2}$}&\multicolumn{1}{c} {$S^{n+3}$}&\multicolumn{1}{c} {$\mo{r}{n+2}$}\\
       \cline{2-4}
     $S^{n}$ & 1& $T_8(a,0)$ &1 \\
      \cline{2-4}
        $\mo{s}{n}$ & 0& 1 &0\\
          \cline{2-4}
     \end{tabular}\end{footnotesize}$~~;
     \end{itemize}

     \begin{itemize}
       \item [(5)]$\begin{footnotesize}\begin{tabular}{c|c|}
   \multicolumn{1}{c}{}&\multicolumn{1}{c} {$S^{n+3}$}\\
       \cline{2-2}
     $S^{n}$ &  $T_8(a,b)$  \\
       \cline{2-2}
      $S^{n+1}$ & $1$\\
        \cline{2-2}
     \end{tabular}\end{footnotesize} $~~;~~ $\begin{footnotesize}\begin{tabular}{c|c|c|}
   \multicolumn{1}{c}{}&\multicolumn{1}{c} {$S^{n+3}$}&\multicolumn{1}{c} {$\mo{r}{n+2}$}\\
       \cline{2-3}
     $S^{n}$ & $T_8(a,0)$  &1\\
       \cline{2-3}
      $S^{n+1}$ & $1$&0\\
        \cline{2-3}
     \end{tabular}\end{footnotesize} $~~;

       \item []$\begin{footnotesize}\begin{tabular}{c|c|}
   \multicolumn{1}{c}{}&\multicolumn{1}{c} {$S^{n+3}$}\\
       \cline{2-2}
     $S^{n}$ &  $T_8(a,b)$  \\
       \cline{2-2}
      $S^{n+1}$ & $1$\\
        \cline{2-2}
      $\mo{s}{n}$ & $1$\\
        \cline{2-2}
     \end{tabular}\end{footnotesize} $~~;~~ $\begin{footnotesize}\begin{tabular}{c|c|c|}
   \multicolumn{1}{c}{}&\multicolumn{1}{c} {$S^{n+3}$}&\multicolumn{1}{c} {$\mo{r}{n+2}$}\\
       \cline{2-3}
     $S^{n}$ & $T_8(a,0)$  &1\\
       \cline{2-3}
      $S^{n+1}$ & $1$&0\\
        \cline{2-3}
       $\mo{s}{n}$& $1$&0\\
        \cline{2-3}
     \end{tabular}\end{footnotesize} $~~;
     \end{itemize}

     \begin{itemize}
       \item [(6)] $ \begin{footnotesize}\begin{tabular}{c|c|}
   \multicolumn{1}{c}{}&\multicolumn{1}{c} {$S^{n+3}$}\\
       \cline{2-2}
   $C_{\eta}:\s{n}$ & $T_4(a,b)$   \\
      \quad ${\s{n+2}}$ & 0 \\
        \cline{2-2}
     \end{tabular}\end{footnotesize} $ ~~;~~$ \begin{footnotesize}\begin{tabular}{c|c|c|}
   \multicolumn{1}{c}{}&\multicolumn{1}{c} {$S^{n+3}$}&\multicolumn{1}{c} {$\mo{r}{n+2}$}\\
       \cline{2-3}
   $C_{\eta}:\s{n}$ & $T_4(a,0)$ &1  \\
      \quad ${\s{n+2}}$ & 0 &0\\
        \cline{2-3}
     \end{tabular}\end{footnotesize} $ ~~;

       \item [] $ \begin{footnotesize}\begin{tabular}{c|c|}
   \multicolumn{1}{c}{}&\multicolumn{1}{c} {$S^{n+3}$}\\
       \cline{2-2}
   $C_{\eta}:\s{n}$ & $T_4(a,0)$   \\
      \quad ${\s{n+2}}$ & 0 \\
        \cline{2-2}
$\mo{s}{n}$ & 1 \\
        \cline{2-2}
     \end{tabular}\end{footnotesize} $ ~~;~~$ \begin{footnotesize}\begin{tabular}{c|c|c|}
   \multicolumn{1}{c}{}&\multicolumn{1}{c} {$S^{n+3}$}&\multicolumn{1}{c} {$\mo{r}{n+2}$}\\
       \cline{2-3}
   $C_{\eta}:\s{n}$ & $T_4(a,0)$ &1  \\
      \quad ${\s{n+2}}$ & 0 &0\\
        \cline{2-3}
   $\mo{s}{n}$ & 1 &0\\
        \cline{2-3}
     \end{tabular}\end{footnotesize} $ ~~;
     \end{itemize}

   \begin{itemize}
     \item [(7)]  $\begin{footnotesize}\begin{tabular}{c|c|}
   \multicolumn{1}{c}{}&\multicolumn{1}{c} {$S^{n+3}$}\\
       \cline{2-2}
     $S^{n}$ & $T_8(a,b)$ \\
       \cline{2-2}
      $S^{n+2}$ & $1$\\
        \cline{2-2}
     \end{tabular}\end{footnotesize} $~~;~~$\begin{footnotesize}\begin{tabular}{c|c|c|}
   \multicolumn{1}{c}{}&\multicolumn{1}{c} {$S^{n+3}$}&\multicolumn{1}{c} {$\mo{r}{n+2}$}\\
       \cline{2-3}
     $S^{n}$ & $T_8(a,0)$&1\\
       \cline{2-3}
      $S^{n+2}$ & $1$&0\\
        \cline{2-3}
     \end{tabular}\end{footnotesize} $

     \item []$\begin{footnotesize}\begin{tabular}{c|c|}
   \multicolumn{1}{c}{}&\multicolumn{1}{c} {$S^{n+3}$}\\
       \cline{2-2}
     $S^{n}$ & $T_8(a,0)$ \\
       \cline{2-2}
      $S^{n+2}$ & $1$\\
      \cline{2-2}
   $\mo{s}{n}$ & 1 \\
    \cline{2-2}
     \end{tabular}\end{footnotesize} $~~;~~$\begin{footnotesize}\begin{tabular}{c|c|c|}
   \multicolumn{1}{c}{}&\multicolumn{1}{c} {$S^{n+3}$}&\multicolumn{1}{c} {$\mo{r}{n+2}$}\\
       \cline{2-3}
     $S^{n}$ & $T_8(a,0)$&1\\
       \cline{2-3}
      $S^{n+2}$ & $1$&0\\
        \cline{2-3}
       $\mo{s}{n}$ & 1 &0\\
       \cline{2-3}
     \end{tabular}\end{footnotesize} $
     \\ \\ \\
  where $(a,b)\in \mathbb{Z}/8\times \mathbb{Z}/3$ such that $a\in \{0,1,2,3\}\subset \mathbb{Z}/8 $ and $(a,b)\neq (0,0)$ for $T_8$; $(a,b)\in \mathbb{Z}/4\times \mathbb{Z}/3$ such that $a\in \{0,1,2,3\}\subset \mathbb{Z}/4 $ and $(a,b)\neq (0,0)$ for $T_4$. $s, r \in \mathbb{N}_+$.
     \end{itemize}

\item [(III)]   $\begin{footnotesize}\begin{tabular}{c|c|}
   \multicolumn{1}{c}{}&\multicolumn{1}{c} {$S^{n+3}$}\\
       \cline{2-2}
     $S^{n}$ &  $T_8(a,b)$  \\
      \cline{2-2}
    \end{tabular}\end{footnotesize} $~;$\begin{footnotesize}\begin{tabular}{c|c|c|}
   \multicolumn{1}{c}{}&\multicolumn{1}{c} {$S^{n+3}$}&\multicolumn{1}{c} {$\mo{r}{n+2}$}\\
       \cline{2-3}
     $S^{n}$ &  $T_8(a,0)$  &1\\
      \cline{2-3}
    \end{tabular}\end{footnotesize} $~;$\begin{footnotesize}\begin{tabular}{c|c|}
   \multicolumn{1}{c}{}&\multicolumn{1}{c} {$S^{n+3}$}\\
       \cline{2-2}
     $S^{n}$ &  $T_8(a,0)$  \\
      \cline{2-2}
        $\mo{s}{n}$ & 1 \\
         \cline{2-2}
    \end{tabular}\end{footnotesize} $~;$\begin{footnotesize}\begin{tabular}{c|c|c|}
   \multicolumn{1}{c}{}&\multicolumn{1}{c} {$S^{n+3}$}&\multicolumn{1}{c} {$\mo{r}{n+2}$}\\
       \cline{2-3}
     $S^{n}$ &  $T_8(a,0)$  &1\\
      \cline{2-3}
       $\mo{s}{n}$ & 1 &0\\
       \cline{2-3}
    \end{tabular}\end{footnotesize} $~;
   where $(a,b)\in \mathbb{Z}/8\times \mathbb{Z}/3$ such that $(a,b)\neq (0,0)$. $s, r \in \mathbb{N}_+$.

\item [(IV)]
    $(1)~ \begin{footnotesize}\begin{tabular}{c|c|}
   \multicolumn{1}{c}{}&\multicolumn{1}{c} {$S^{n+2}$}\\
       \cline{2-2}
     $S^{n+1}$ & $1$ \\
      \cline{2-2}
    \end{tabular}\end{footnotesize}$ ~;~

    $(2)~\begin{footnotesize}\begin{tabular}{c|c|}
   \multicolumn{1}{c}{}&\multicolumn{1}{c} {$S^{n+3}$}\\
       \cline{2-2}
     $S^{n+2}$ & $1$ \\
      \cline{2-2}
    \end{tabular}\end{footnotesize}$~;~
     $\begin{footnotesize}\begin{tabular}{c|c|}
   \multicolumn{1}{c}{}&\multicolumn{1}{c} {$S^{n+3}$}\\
       \cline{2-2}
     $S^{n+2}$ & $1$ \\
      \cline{2-2}
       $\mo{s}{n}$ & 1\\
       \cline{2-2}
    \end{tabular}\end{footnotesize}$~;

    $(3)~\begin{footnotesize}\begin{tabular}{c|c|}
   \multicolumn{1}{c}{}&\multicolumn{1}{c} {$S^{n+2}$}\\
       \cline{2-2}
     $S^{n}$ & $1$ \\
      \cline{2-2}
    \end{tabular}\end{footnotesize}$~;~$\begin{footnotesize}\begin{tabular}{c|c|c|}
   \multicolumn{1}{c}{}&\multicolumn{1}{c} {$S^{n+2}$}&\multicolumn{1}{c} {$\mo{r}{n+2}$}\\
       \cline{2-3}
     $S^{n}$ & $1$&1 \\
      \cline{2-3}
    \end{tabular}\end{footnotesize} $~;

    $(4)~\begin{footnotesize}\begin{tabular}{c|c|}
   \multicolumn{1}{c}{}&\multicolumn{1}{c} {$S^{n+3}$}\\
       \cline{2-2}
     $S^{n+1}$ & $1$ \\
      \cline{2-2}
    \end{tabular}\end{footnotesize} $~;~
$\begin{footnotesize}\begin{tabular}{c|c|}
   \multicolumn{1}{c}{}&\multicolumn{1}{c} {$S^{n+3}$}\\
       \cline{2-2}
     $S^{n+1}$ & $1$ \\
      \cline{2-2}
     $\mo{s}{n}$  & $1$ \\
      \cline{2-2}
    \end{tabular}\end{footnotesize} $ .
\\ \\ \\ where $r , s  \in \mathbb{N}_+$.
\end{itemize}

 Through a detailed check by admissible transformations of matrix problem  $(\mathscr{A}', \mathcal{G}')$, it follows the following

\begin{theorem}\label{theorem6.3}
All indecomposable isomorphic classes of $(\mathscr{A}', \mathcal{G}')$ are given by the following list
\begin{itemize}
  \item [$(I)$] $\begin{footnotesize}\begin{tabular}{c|c|}
   \multicolumn{1}{c}{}&\multicolumn{1}{c} {$S^{n+3}$}\\
       \cline{2-2}
     $S^{n+2}$ & $1$ \\
       \cline{2-2}
      $C_{\eta}:\s{n}$ & $v$\\
      \quad ${\s{n+2}}$ & $0$ \\
        \cline{2-2}
          \multicolumn{2}{c}{} \\
     \multicolumn{1}{c}{} & \multicolumn{1}{c} {$X(\eta v \eta)$}\\
     \end{tabular}\end{footnotesize}$ ~~;
     ~~$\begin{footnotesize}\begin{tabular}{c|c|c|}
   \multicolumn{1}{c}{}&\multicolumn{1}{c} {$S^{n+3}$}&\multicolumn{1}{c} {$\mo{r}{n+2}$}\\
       \cline{2-3}
     $S^{n+2}$ & $1$ & $0$\\
       \cline{2-3}
      $C_{\eta}:\s{n}$ & $3$& $1$\\
      \quad ${\s{n+2}}$ & $0$ & $0$\\
        \cline{2-3}
         \multicolumn{3}{c}{} \\
     \multicolumn{1}{c}{} & \multicolumn{2}{c} {$X(\eta 3 \eta)^r$}\\
     \end{tabular}\end{footnotesize}$~;

     $\begin{footnotesize}\begin{tabular}{c|c|}
   \multicolumn{1}{c}{}&\multicolumn{1}{c} {$S^{n+3}$}\\
       \cline{2-2}
     $S^{n+2}$ & $1$ \\
       \cline{2-2}
      $C_{\eta}:\s{n}$ & $3$\\
      \quad ${\s{n+2}}$ & $0$ \\
        \cline{2-2}
       $\mo{s}{n}$ & $1$\\
        \cline{2-2}
         \multicolumn{2}{c}{} \\
     \multicolumn{1}{c}{} & \multicolumn{1}{c} {$X(\eta 3 \eta)_s$}\\
     \end{tabular}\end{footnotesize}$ ~~;
     ~~$\begin{footnotesize}\begin{tabular}{c|c|c|}
   \multicolumn{1}{c}{}&\multicolumn{1}{c} {$S^{n+3}$}&\multicolumn{1}{c} {$\mo{r}{n+2}$}\\
       \cline{2-3}
     $S^{n+2}$ & $1$ & $0$\\
       \cline{2-3}
      $C_{\eta}:\s{n}$ & $3$ & $1$\\
      \quad ${\s{n+2}}$ & $0$ & $0$\\
        \cline{2-3}
         $\mo{s}{n}$ & $1$& $0$\\
        \cline{2-3}
         \multicolumn{3}{c}{} \\
     \multicolumn{1}{c}{} & \multicolumn{2}{c} {$X(\eta 3 \eta)_s^r$}\\
     \end{tabular}\end{footnotesize}$~~,
     \\ \\ \\ where $v\in\{1,2,3\}\subset \mathbb{Z}/12.$
  \item [$(II)$]
   \begin{itemize}
     \item [$(1)$] $\begin{footnotesize}\begin{tabular}{c|c|c|}
   \multicolumn{1}{c}{}&\multicolumn{1}{c} {$S^{n+2}$}&\multicolumn{1}{c} {$S^{n+3}$}\\
       \cline{2-3}
     $S^{n}$ & $1$& $v$ \\
        \cline{2-3}
      $S^{n+1}$ & $0$& $1$ \\
       \cline{2-3}
         \multicolumn{3}{c}{} \\
     \multicolumn{1}{c}{} & \multicolumn{2}{c} {$X(\eta\eta v\eta\eta)$}\\
     \end{tabular}\end{footnotesize}$ ~~;~~   $\begin{footnotesize}\begin{tabular}{c|c|c|c|}
   \multicolumn{1}{c}{}&\multicolumn{1}{c} {$S^{n+2}$}&\multicolumn{1}{c} {$S^{n+3}$}&\multicolumn{1}{c} {$\mo{r}{n+2}$}\\
       \cline{2-4}
     $S^{n}$ & $1$& $v_{1}$ & $1$\\
        \cline{2-4}
      $S^{n+1}$ & $0$& $1$ & $0$\\
       \cline{2-4}
        \multicolumn{4}{c}{} \\
     \multicolumn{1}{c}{} & \multicolumn{3}{c} {$X(\eta\eta v_1\eta\eta)^r$}\\
     \end{tabular}\end{footnotesize}$~~;

     \item [] $\begin{footnotesize}\begin{tabular}{c|c|c|}
   \multicolumn{1}{c}{}&\multicolumn{1}{c} {$S^{n+2}$}&\multicolumn{1}{c} {$S^{n+3}$}\\
       \cline{2-3}
     $S^{n}$ & $1$& $v_{1}$ \\
        \cline{2-3}
      $S^{n+1}$ & $0$& $1$ \\
       \cline{2-3}
        $\mo{s}{n}$ & $0$& $1$ \\
       \cline{2-3}
        \multicolumn{3}{c}{} \\
     \multicolumn{1}{c}{} & \multicolumn{2}{c} {$X(\eta\eta v_1\eta\eta)_s$}\\
     \end{tabular}\end{footnotesize}$ ~~;~~   $\begin{footnotesize}\begin{tabular}{c|c|c|c|}
   \multicolumn{1}{c}{}&\multicolumn{1}{c} {$S^{n+2}$}&\multicolumn{1}{c} {$S^{n+3}$}&\multicolumn{1}{c} {$\mo{r}{n+2}$}\\
       \cline{2-4}
     $S^{n}$ & $1$& $v_{1}$ & $1$\\
        \cline{2-4}
      $S^{n+1}$ & $0$& $1$ & $0$\\
       \cline{2-4}
       $\mo{s}{n}$ & $0$& $1$& $0$ \\
      \cline{2-4}
        \multicolumn{4}{c}{} \\
     \multicolumn{1}{c}{} & \multicolumn{3}{c} {$X(\eta\eta v_1\eta\eta)^r_s$}\\
     \end{tabular}\end{footnotesize}$~~;
    \end{itemize}

   \begin{itemize}
     \item [$(2)$] $\begin{footnotesize}\begin{tabular}{c|c|c|}
   \multicolumn{1}{c}{}&\multicolumn{1}{c} {$S^{n+2}$}&\multicolumn{1}{c} {$S^{n+3}$}\\
       \cline{2-3}
     $S^{n}$ & $1$& $v$ \\
        \cline{2-3}
      $S^{n+2}$ & $0$& $1$ \\
       \cline{2-3}
        \multicolumn{3}{c}{} \\
     \multicolumn{1}{c}{} & \multicolumn{2}{c} {$X(\eta\eta v\eta)$}\\
     \end{tabular}\end{footnotesize}$ ~~;~~   $\begin{footnotesize}\begin{tabular}{c|c|c|c|}
   \multicolumn{1}{c}{}&\multicolumn{1}{c} {$S^{n+2}$}&\multicolumn{1}{c} {$S^{n+3}$}&\multicolumn{1}{c} {$\mo{r}{n+2}$}\\
       \cline{2-4}
     $S^{n}$ & $1$& $v_{1}$ & $1$\\
        \cline{2-4}
      $S^{n+2}$ & $0$& $1$ & $0$\\
       \cline{2-4}
        \multicolumn{4}{c}{} \\
     \multicolumn{1}{c}{} & \multicolumn{3}{c} {$X(\eta\eta v_1\eta)^r$}\\
     \end{tabular}\end{footnotesize}$~~;

     \item [] $\begin{footnotesize}\begin{tabular}{c|c|c|}
   \multicolumn{1}{c}{}&\multicolumn{1}{c} {$S^{n+2}$}&\multicolumn{1}{c} {$S^{n+3}$}\\
       \cline{2-3}
     $S^{n}$ & $1$& $v_{1}$ \\
        \cline{2-3}
      $S^{n+2}$ & $0$& $1$ \\
       \cline{2-3}
        $\mo{s}{n}$ & $0$& $1$ \\
       \cline{2-3}
          \multicolumn{3}{c}{} \\
     \multicolumn{1}{c}{} & \multicolumn{2}{c} {$X(\eta\eta v_1\eta)_s$}\\
     \end{tabular}\end{footnotesize}$ ~~;~~   $\begin{footnotesize}\begin{tabular}{c|c|c|c|}
   \multicolumn{1}{c}{}&\multicolumn{1}{c} {$S^{n+2}$}&\multicolumn{1}{c} {$S^{n+3}$}&\multicolumn{1}{c} {$\mo{r}{n+2}$}\\
       \cline{2-4}
     $S^{n}$ & $1$& $v_{1}$ & $1$\\
        \cline{2-4}
      $S^{n+2}$ & $0$& $1$ & $0$\\
       \cline{2-4}
       $\mo{s}{n}$ & $0$& $1$& $0$ \\
      \cline{2-4}
        \multicolumn{4}{c}{} \\
     \multicolumn{1}{c}{} & \multicolumn{3}{c} {$X(\eta\eta v_1\eta)^r_s$}\\
     \end{tabular}\end{footnotesize}$~~;
    \end{itemize}

   \begin{itemize}
     \item [$(3)$] $\begin{footnotesize}\begin{tabular}{c|c|}
   \multicolumn{1}{c}{}&\multicolumn{1}{c} {$S^{n+3}$}\\
       \cline{2-2}
     $S^{n+1}$ & $1$ \\
       \cline{2-2}
      $C_{\eta}:\s{n}$ & $v$\\
      \quad ${\s{n+2}}$ & $0$ \\
        \cline{2-2}
           \multicolumn{2}{c}{} \\
     \multicolumn{1}{c}{} & \multicolumn{1}{c} {$X(\eta v\eta\eta)$}\\
     \end{tabular}\end{footnotesize}$ ~~;~~$\begin{footnotesize}\begin{tabular}{c|c|c|}
   \multicolumn{1}{c}{}&\multicolumn{1}{c} {$S^{n+3}$}&\multicolumn{1}{c} {$\mo{r}{n+2}$}\\
       \cline{2-3}
     $S^{n+1}$ & $1$ & $0$\\
       \cline{2-3}
      $C_{\eta}:\s{n}$ & $v_{1}$& $1$\\
      \quad ${\s{n+2}}$ & $0$ & $0$\\
        \cline{2-3}
              \multicolumn{3}{c}{} \\
     \multicolumn{1}{c}{} & \multicolumn{2}{c} {$X(\eta v_1\eta\eta)^r$}\\
     \end{tabular}\end{footnotesize}$~;

     \item [] $\begin{footnotesize}\begin{tabular}{c|c|}
   \multicolumn{1}{c}{}&\multicolumn{1}{c} {$S^{n+3}$}\\
       \cline{2-2}
     $S^{n+2}$ & $1$ \\
       \cline{2-2}
      $C_{\eta}:\s{n}$ & $v_{1}$\\
      \quad ${\s{n+2}}$ & $0$ \\
        \cline{2-2}
       $\mo{s}{n}$ & $1$\\
        \cline{2-2}
           \multicolumn{2}{c}{} \\
     \multicolumn{1}{c}{} & \multicolumn{1}{c} {$X(\eta v_1\eta\eta)_s$}\\
     \end{tabular}\end{footnotesize}$ ~~;~~$\begin{footnotesize}\begin{tabular}{c|c|c|}
   \multicolumn{1}{c}{}&\multicolumn{1}{c} {$S^{n+3}$}&\multicolumn{1}{c} {$\mo{r}{n+2}$}\\
       \cline{2-3}
     $S^{n+2}$ & $1$ & $0$\\
       \cline{2-3}
      $C_{\eta}:\s{n}$ & $v_{1}$& $1$\\
      \quad ${\s{n+2}}$ & $0$ & $0$\\
        \cline{2-3}
         $\mo{s}{n}$ & $1$& $0$\\
        \cline{2-3}
              \multicolumn{3}{c}{} \\
     \multicolumn{1}{c}{} & \multicolumn{2}{c} {$X(\eta v_1\eta\eta)^r_s$}\\
     \end{tabular}\end{footnotesize}$~;
    \end{itemize}

     \begin{itemize}
       \item [$(4)$]$\begin{footnotesize}\begin{tabular}{c|c|c|}
      \multicolumn{1}{c}{}&\multicolumn{1}{c} {$S^{n+2}$}&\multicolumn{1}{c} {$S^{n+3}$}\\
       \cline{2-3}
     $S^{n}$ & $1$& $v$  \\
      \cline{2-3}
          \multicolumn{3}{c}{} \\
     \multicolumn{1}{c}{} & \multicolumn{2}{c} {$X(\eta\eta v)$}\\
     \end{tabular}\end{footnotesize}$~~;~~$\begin{footnotesize}\begin{tabular}{c|c|c|c|}
      \multicolumn{1}{c}{}&\multicolumn{1}{c} {$S^{n+2}$}&\multicolumn{1}{c} {$S^{n+3}$}&\multicolumn{1}{c} {$\mo{r}{n+2}$}\\
       \cline{2-4}
     $S^{n}$ & $1$& $v_{1}$ & $1$ \\
      \cline{2-4}
        \multicolumn{4}{c}{} \\
     \multicolumn{1}{c}{} & \multicolumn{3}{c} {$X(\eta\eta v_1)^r$}\\
     \end{tabular}\end{footnotesize}$~~;

       \item []$\begin{footnotesize}\begin{tabular}{c|c|c|}
      \multicolumn{1}{c}{}&\multicolumn{1}{c} {$S^{n+2}$}&\multicolumn{1}{c} {$S^{n+3}$}\\
       \cline{2-3}
     $S^{n}$ & $1$& $v_{1}$  \\
      \cline{2-3}
     $\mo{s}{n}$ & $0$& $1$ \\
      \cline{2-3}
        \multicolumn{3}{c}{} \\
     \multicolumn{1}{c}{} & \multicolumn{2}{c} {$X(\eta\eta v_1)_s$}\\
     \end{tabular}\end{footnotesize}$~~;~~$\begin{footnotesize}\begin{tabular}{c|c|c|c|}
      \multicolumn{1}{c}{}&\multicolumn{1}{c} {$S^{n+2}$}&\multicolumn{1}{c} {$S^{n+3}$}&\multicolumn{1}{c} {$\mo{r}{n+2}$}\\
       \cline{2-4}
     $S^{n}$ & $1$& $v_{1}$ & $1$ \\
      \cline{2-4}
        $\mo{s}{n}$ & $0$& $1$ & $0$\\
          \cline{2-4}
            \multicolumn{4}{c}{} \\
     \multicolumn{1}{c}{} & \multicolumn{3}{c} {$X(\eta\eta v_1)^r_s$}\\
     \end{tabular}\end{footnotesize}$~~;
     \end{itemize}

     \begin{itemize}
       \item [$(5)$]$\begin{footnotesize}\begin{tabular}{c|c|}
   \multicolumn{1}{c}{}&\multicolumn{1}{c} {$S^{n+3}$}\\
       \cline{2-2}
     $S^{n}$ &  $v$  \\
       \cline{2-2}
      $S^{n+1}$ & $1$\\
        \cline{2-2}
          \multicolumn{2}{c}{} \\
     \multicolumn{1}{c}{} & \multicolumn{1}{c} {$X( v \eta\eta)$}\\
     \end{tabular}\end{footnotesize} $~~;~~ $\begin{footnotesize}\begin{tabular}{c|c|c|}
   \multicolumn{1}{c}{}&\multicolumn{1}{c} {$S^{n+3}$}&\multicolumn{1}{c} {$\mo{r}{n+2}$}\\
       \cline{2-3}
     $S^{n}$ & $v_{1}$  & $1$\\
       \cline{2-3}
      $S^{n+1}$ & $1$& $0$\\
        \cline{2-3}
           \multicolumn{3}{c}{} \\
     \multicolumn{1}{c}{} & \multicolumn{2}{c} {$X( v_1 \eta\eta)^r$}\\
     \end{tabular}\end{footnotesize} $~~;

       \item []$\begin{footnotesize}\begin{tabular}{c|c|}
   \multicolumn{1}{c}{}&\multicolumn{1}{c} {$S^{n+3}$}\\
       \cline{2-2}
     $S^{n}$ &  $v$  \\
       \cline{2-2}
      $S^{n+1}$ & $1$\\
        \cline{2-2}
      $\mo{s}{n}$ & $1$\\
        \cline{2-2}
          \multicolumn{2}{c}{} \\
     \multicolumn{1}{c}{} & \multicolumn{1}{c} {$X( v_1 \eta\eta)_s$}\\
     \end{tabular}\end{footnotesize} $~~;~~ $\begin{footnotesize}\begin{tabular}{c|c|c|}
   \multicolumn{1}{c}{}&\multicolumn{1}{c} {$S^{n+3}$}&\multicolumn{1}{c} {$\mo{r}{n+2}$}\\
       \cline{2-3}
     $S^{n}$ & $v_{1}$  & $1$\\
       \cline{2-3}
      $S^{n+1}$ & $1$& $0$\\
        \cline{2-3}
       $\mo{s}{n}$& $1$& $0$\\
        \cline{2-3}
           \multicolumn{3}{c}{} \\
     \multicolumn{1}{c}{} & \multicolumn{2}{c} {$X( v_1 \eta\eta)^r_s$}\\
     \end{tabular}\end{footnotesize} $~~;
     \end{itemize}

     \begin{itemize}
       \item [$(6)$] $ \begin{footnotesize}\begin{tabular}{c|c|}
   \multicolumn{1}{c}{}&\multicolumn{1}{c} {$S^{n+3}$}\\
       \cline{2-2}
   $C_{\eta}:\s{n}$ & $v$   \\
      \quad ${\s{n+2}}$ & $0$ \\
        \cline{2-2}
             \multicolumn{2}{c}{} \\
     \multicolumn{1}{c}{} & \multicolumn{1}{c} {$X(\eta v )$}\\
     \end{tabular}\end{footnotesize} $ ~~;~~$ \begin{footnotesize}\begin{tabular}{c|c|c|}
   \multicolumn{1}{c}{}&\multicolumn{1}{c} {$S^{n+3}$}&\multicolumn{1}{c} {$\mo{r}{n+2}$}\\
       \cline{2-3}
   $C_{\eta}:\s{n}$ & $v_{1}$ & $1$  \\
      \quad ${\s{n+2}}$ & $0$ & $0$\\
        \cline{2-3}
          \multicolumn{3}{c}{} \\
     \multicolumn{1}{c}{} & \multicolumn{2}{c} {$X( \eta v_1 )^r$}\\
     \end{tabular}\end{footnotesize} $ ~~;

       \item [] $ \begin{footnotesize}\begin{tabular}{c|c|}
   \multicolumn{1}{c}{}&\multicolumn{1}{c} {$S^{n+3}$}\\
       \cline{2-2}
   $C_{\eta}:\s{n}$ & $v_{1}$   \\
      \quad ${\s{n+2}}$ & $0$ \\
        \cline{2-2}
$\mo{s}{n}$ & $1$ \\
        \cline{2-2}
         \multicolumn{2}{c}{} \\
     \multicolumn{1}{c}{} & \multicolumn{1}{c} {$X(\eta v_1 )_s$}\\
     \end{tabular}\end{footnotesize} $ ~~;~~$ \begin{footnotesize}\begin{tabular}{c|c|c|}
   \multicolumn{1}{c}{}&\multicolumn{1}{c} {$S^{n+3}$}&\multicolumn{1}{c} {$\mo{r}{n+2}$}\\
       \cline{2-3}
   $C_{\eta}:\s{n}$ & $v_{1}$ & $1$  \\
      \quad ${\s{n+2}}$ & $0$ & $0$\\
        \cline{2-3}
   $\mo{s}{n}$ & $1$ & $0$\\
        \cline{2-3}
          \multicolumn{3}{c}{} \\
     \multicolumn{1}{c}{} & \multicolumn{2}{c} {$X( \eta v_1 )^r_s$}\\
     \end{tabular}\end{footnotesize} $ ~~;
     \end{itemize}

   \begin{itemize}
     \item [$(7)$]  $\begin{footnotesize}\begin{tabular}{c|c|}
   \multicolumn{1}{c}{}&\multicolumn{1}{c} {$S^{n+3}$}\\
       \cline{2-2}
     $S^{n}$ & $v$ \\
       \cline{2-2}
      $S^{n+2}$ & $1$\\
        \cline{2-2}
        \multicolumn{2}{c}{} \\
     \multicolumn{1}{c}{} & \multicolumn{1}{c} {$X( v\eta )$}\\
     \end{tabular}\end{footnotesize} $~~;~~$\begin{footnotesize}\begin{tabular}{c|c|c|}
   \multicolumn{1}{c}{}&\multicolumn{1}{c} {$S^{n+3}$}&\multicolumn{1}{c} {$\mo{r}{n+2}$}\\
       \cline{2-3}
     $S^{n}$ & $v_{1}$& $1$\\
       \cline{2-3}
      $S^{n+2}$ & $1$& $0$\\
      \cline{2-3}
         \multicolumn{3}{c}{} \\
     \multicolumn{1}{c}{} & \multicolumn{2}{c} {$X( v_1\eta  )^r$}\\
     \end{tabular}\end{footnotesize} ~~;$

     \item []$\begin{footnotesize}\begin{tabular}{c|c|}
   \multicolumn{1}{c}{}&\multicolumn{1}{c} {$S^{n+3}$}\\
       \cline{2-2}
     $S^{n}$ & $v_{1}$ \\
       \cline{2-2}
      $S^{n+2}$ & $1$\\
      \cline{2-2}
   $\mo{s}{n}$ & $1$ \\
    \cline{2-2}
      \multicolumn{2}{c}{} \\
     \multicolumn{1}{c}{} & \multicolumn{1}{c} {$X( v_1\eta )_s$}\\
     \end{tabular}\end{footnotesize} $~~;~~$\begin{footnotesize}\begin{tabular}{c|c|c|}
   \multicolumn{1}{c}{}&\multicolumn{1}{c} {$S^{n+3}$}&\multicolumn{1}{c} {$\mo{r}{n+2}$}\\
       \cline{2-3}
     $S^{n}$ & $v_{1}$& $1$\\
       \cline{2-3}
      $S^{n+2}$ & $1$& $0$\\
        \cline{2-3}
       $\mo{s}{n}$ & $1$ & $0$\\
       \cline{2-3}
       \multicolumn{3}{c}{} \\
     \multicolumn{1}{c}{} & \multicolumn{2}{c} {$X( v_1\eta  )^r_s$}\\
     \end{tabular}\end{footnotesize} ~~,$
  \\ \\ \\
  where $v\in\{1,2,3,4,5,6\}\subset \mathbb{Z}/24$ or $\mathbb{Z}/12$ , $v_1\in\{3,6\}\subset \mathbb{Z}/24$ or $\mathbb{Z}/12$ and $r , s\in \mathbb{N}_+$.
  \end{itemize}

  \item [$(III)$] $\begin{footnotesize}\begin{tabular}{c|c|}
   \multicolumn{1}{c}{}&\multicolumn{1}{c} {$S^{n+3}$}\\
       \cline{2-2}
     $S^{n}$ &  $v$  \\
      \cline{2-2}
         \multicolumn{2}{c}{} \\
     \multicolumn{1}{c}{} & \multicolumn{1}{c} {$X( v )$}\\
    \end{tabular}\end{footnotesize} $~;~$\begin{footnotesize}\begin{tabular}{c|c|c|}
   \multicolumn{1}{c}{}&\multicolumn{1}{c} {$S^{n+3}$}&\multicolumn{1}{c} {$\mo{r}{n+2}$}\\
       \cline{2-3}
     $S^{n}$ &  $v_1$  & $1$\\
      \cline{2-3}
       \multicolumn{3}{c}{} \\
     \multicolumn{1}{c}{} & \multicolumn{2}{c} {$X( v_1 )^r$}\\
    \end{tabular}\end{footnotesize} $~;~$\begin{footnotesize}\begin{tabular}{c|c|}
   \multicolumn{1}{c}{}&\multicolumn{1}{c} {$S^{n+3}$}\\
       \cline{2-2}
     $S^{n}$ &  $v_1$  \\
      \cline{2-2}
        $\mo{s}{n}$ & $1$ \\
         \cline{2-2}
           \multicolumn{2}{c}{} \\
     \multicolumn{1}{c}{} & \multicolumn{1}{c} {$X( v_1 )_s$}\\
    \end{tabular}\end{footnotesize} $~;~$\begin{footnotesize}\begin{tabular}{c|c|c|}
   \multicolumn{1}{c}{}&\multicolumn{1}{c} {$S^{n+3}$}&\multicolumn{1}{c} {$\mo{r}{n+2}$}\\
       \cline{2-3}
     $S^{n}$ &  $v_1$  & $1$\\
      \cline{2-3}
       $\mo{s}{n}$ & $1$ & $0$\\
       \cline{2-3}
            \multicolumn{3}{c}{} \\
     \multicolumn{1}{c}{} & \multicolumn{2}{c} {$X( v_1 )^r_s$}\\
    \end{tabular}\end{footnotesize} $~;~
    \\ \\  \\
    where $v\in\{1,2,\cdots,12\}\subset\mathbb{Z}/24$ and $v_1\in\{3,6,9\}\subset\mathbb{Z}/24$. $r ,s \in \mathbb{N}_+$.

\item [$(IV)$]
   $(1)\begin{footnotesize}\begin{tabular}{c|c|}
   \multicolumn{1}{c}{}&\multicolumn{1}{c} {$S^{n+2}$}\\
       \cline{2-2}
     $S^{n+1}$ & $1$ \\
      \cline{2-2}
         \multicolumn{2}{c}{} \\
     \multicolumn{1}{c}{} & \multicolumn{1}{c} {$X(\eta_1 )$}\\
    \end{tabular}\end{footnotesize}$ ~;~

    $(2)~\begin{footnotesize}\begin{tabular}{c|c|}
   \multicolumn{1}{c}{}&\multicolumn{1}{c} {$S^{n+3}$}\\
       \cline{2-2}
     $S^{n+2}$ & $1$ \\
      \cline{2-2}
         \multicolumn{2}{c}{} \\
     \multicolumn{1}{c}{} & \multicolumn{1}{c} {$X(\eta_2 )$}\\
    \end{tabular}\end{footnotesize}$~;~
     $\begin{footnotesize}\begin{tabular}{c|c|}
   \multicolumn{1}{c}{}&\multicolumn{1}{c} {$S^{n+3}$}\\
       \cline{2-2}
     $S^{n+2}$ & $1$ \\
      \cline{2-2}
       $\mo{s}{n}$ & $1$\\
       \cline{2-2}
          \multicolumn{2}{c}{} \\
     \multicolumn{1}{c}{} & \multicolumn{1}{c} {$X(\eta_2 )_s$}\\
    \end{tabular}\end{footnotesize}~ ;$

    $(3)~\begin{footnotesize}\begin{tabular}{c|c|}
   \multicolumn{1}{c}{}&\multicolumn{1}{c} {$S^{n+2}$}\\
       \cline{2-2}
     $S^{n}$ & $1$ \\
      \cline{2-2}
        \multicolumn{2}{c}{} \\
     \multicolumn{1}{c}{} & \multicolumn{1}{c} {$X(\eta\eta )_{0}$}\\
    \end{tabular}\end{footnotesize}$~;~$\begin{footnotesize}\begin{tabular}{c|c|c|}
   \multicolumn{1}{c}{}&\multicolumn{1}{c} {$S^{n+2}$}&\multicolumn{1}{c} {$\mo{r}{n+2}$}\\
       \cline{2-3}
     $S^{n}$ & $1$& $1$ \\
      \cline{2-3}
        \multicolumn{2}{c}{} \\
     \multicolumn{1}{c}{} & \multicolumn{1}{c} {$X(\eta\eta )_{0}^r$}\\
    \end{tabular}\end{footnotesize} $~;

    $(4)~\begin{footnotesize}\begin{tabular}{c|c|}
   \multicolumn{1}{c}{}&\multicolumn{1}{c} {$S^{n+3}$}\\
       \cline{2-2}
     $S^{n+1}$ & $1$ \\
      \cline{2-2}
         \multicolumn{2}{c}{} \\
     \multicolumn{1}{c}{} & \multicolumn{1}{c} {$X(\eta\eta )_{1}$}\\
    \end{tabular}\end{footnotesize} $~;~
$\begin{footnotesize}\begin{tabular}{c|c|}
   \multicolumn{1}{c}{}&\multicolumn{1}{c} {$S^{n+3}$}\\
       \cline{2-2}
     $S^{n+1}$ & $1$ \\
      \cline{2-2}
     $\mo{s}{n}$  & $1$ \\
      \cline{2-2}
         \multicolumn{2}{c}{} \\
     \multicolumn{1}{c}{} & \multicolumn{1}{c} {$X(\eta\eta )_{1s}$}\\
    \end{tabular}\end{footnotesize}$~,
 \\ \\ \\ where $r,s\in \mathbb{N}_+$.
\end{itemize}

\end{theorem}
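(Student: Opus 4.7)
The plan is to deduce Theorem \ref{theorem6.3} from the candidate list displayed just above the statement, by applying only the admissible transformations of $(\mathscr{A}', \mathcal{G}')$ that are \emph{not} present in $\mathcal{G}'^+$, then checking indecomposability and pairwise non-isomorphism. The setup is already in place: Theorem \ref{theorem6.1} together with \textbf{List(2)} and \textbf{List(3)} ensures that every matrix $M \in \mathscr{A}'$ is isomorphic in $(\mathscr{A}', \mathcal{G}')$ to a matrix of the form $T(\bigoplus_i N_2^i \oplus \mathbf{O}_2, \bigoplus_j N_3^j \oplus \mathbf{O}_3)$, and since each $N_3^j$ has at most one non-zero entry per row and column, the combinatorics of gluing a 3-primary summand onto a 2-primary summand via $T_8$ and $T_4$ is controlled: either the 3-primary entry lies in the same block $W^y_x$ as a 2-primary entry (producing a single combined scalar in $\mathbb{Z}/24$ or $\mathbb{Z}/12$) or it occupies a fresh row/column indexed by a $\mo{s}{n}$- or $\mo{r}{n+2}$-stripe.

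First I would treat the ``pure'' case, where no $\mo{s}{n}$ or $\mo{r}{n+2}$ stripe appears. Here the candidate entry is $T_8(a,b)$ or $T_4(a,b)$ with $a$ taken from the \textbf{List(2)} range and $b \in \mathbb{Z}/3$. The additional freedom in $\mathcal{G}'$ (namely $(-1i)$-type elementary transformations, which are forbidden in $\mathcal{G}'^+$) acts on $T_8(a,b)$ by sign inversion modulo $24$, so one can replace $v$ by $-v$ in $\mathbb{Z}/24$; combined with the admissible modifications of other entries (the $(j+ai)$-type transformations acting on rows of $W_{S^n}$ by additions from $W_{C_\eta:n}$ or $W_{S^{n+1}}, W_{S^{n+2}}$, and the rule of Remark 5.6 that adds $12$ when shifting a $\mathbb{Z}/2$-entry through $\mathbb{Z}/24$), this trims the range of $v$ to $\{1,2,3,4,5,6\} \subset \mathbb{Z}/24$ and $\{1,2,3,4,5,6\} \subset \mathbb{Z}/12$ respectively, matching the statement. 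Case (III) is similar but more generous, producing $v \in \{1,\dots,12\}$.

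Next I would treat the ``mixed'' case, where a Moore-space stripe is present. Here a key constraint forces $b = 0$ in the candidate matrices: any non-zero $b$-entry in the $\mo{r}{n+2}$-column or $\mo{s}{n}$-row can be cleared by the admissible column transformation $W^{\mo{r+1}{n+2}} < W^{\mo{r}{n+2}}$ or the row transformation $W_{\mo{s+1}{n}} < W_{\mo{s}{n}}$ used together with the $\mathbb{Z}/3$-scaling within a Moore-space stripe, reducing the entry to a normal form in which the 3-primary part of $v$ is absorbed. After this reduction, the surviving scalar $v_1$ is a multiple of $3$ in $\mathbb{Z}/24$ or $\mathbb{Z}/12$; the sign flip reduces its range further, yielding $v_1 \in \{3,6\} \subset \mathbb{Z}/24$ or $\mathbb{Z}/12$ in cases (I)--(II), and $v_1 \in \{3,6,9\} \subset \mathbb{Z}/24$ in case (III). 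Cases (IV) follow trivially since no parameter survives.

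The main obstacle is the second direction: verifying that distinct entries in the final list really are non-isomorphic. For this I would compute, for each candidate $X$, the stable homotopy invariants of its underlying polyhedron via the Puppe cofibre sequence associated to its matrix (boundary operators on $H_{n+2}$, $H_{n+3}$ tensored with $\mathbb{Z}/2$ and $\mathbb{Z}/3$, together with the Steenrod square $Sq^2$ detecting the $\eta$-part and the structure maps detecting the $\eta^2$- and $v$-parts), and check that different choices of $v$ or $v_1$ produce distinguishable invariants. Indecomposability of each matrix in the list follows because any further admissible transformation would have to be of a type already ruled out during the reduction: the only non-trivial admissible transformations left interact with zero blocks or would violate the parity/congruence restrictions of Tables 8--9 (e.g.\ $2W_{C_\eta:n} < W_{S^n}$ rather than $W_{C_\eta:n} < W_{S^n}$, and $6W_{S^{n+2}} < W_{C_\eta:n}$), and these obstructions are exactly what separate $v \in \{1,2,3\}$ from $\{4,5,6\}$ and $v_1 \in \{3\}$ from $\{6\}$. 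This last bookkeeping—matching each allowed transformation to its precise effect on $v$—is the step that requires the most care.
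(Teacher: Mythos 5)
Your overall plan matches the paper's: accept the candidate list displayed just above Theorem \ref{theorem6.3} (which the paper obtains from Theorem \ref{theorem6.1} together with \textbf{List(2)} and \textbf{List(3)}), then trim it using the admissible transformations of $\mathcal{G}'$ that are absent from $\mathcal{G}'^+$ (sign flips of a row/column, the $\pm 6$ operation of rule (g), the $\pm12$ operation of Remark 5.6), and finally check indecomposability and pairwise non-isomorphism. Neither you nor the paper writes out the bookkeeping, so at this level of detail the two arguments coincide.

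However, the mechanism you cite for forcing $b=0$ in the mixed-case candidates is not correct. The operations $W^{M_{3^{r+1}}^{n+2}}<W^{M_{3^{r}}^{n+2}}$ and $W_{M_{3^{s+1}}^{n}}<W_{M_{3^{s}}^{n}}$ move data between two \emph{distinct} Moore-space stripes; even combined with unit scalings inside a Moore stripe they never alter the $\mathbb{Z}/24$ (resp.\ $\mathbb{Z}/12$) entry sitting in the $S^{n+3}$-column, so they cannot clear its $\mathbb{Z}/3$-component. The only cross-stripe additions that touch a Moore stripe here are $W^{S^{n+3}}<W^{M_{3^{r}}^{n+2}}$ and $W_{C_\eta:n}<W_{M_{3^{s}}^{n}}$, and these run in the wrong direction: they change the $\mathbb{Z}/3$ entry in the Moore stripe, not the combined scalar. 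The actual reason $b=0$ in the mixed candidates is structural, not a normalisation step: each $N_3^j$ from \textbf{List(3)} has a single nonzero $\mathbb{Z}/3$-cell, so whenever the $3$-primary datum occupies a Moore-stripe cell, the $3$-primary component of the scalar paired with the $N_2$-indecomposable is already zero -- this is exactly how the paper assembles the candidate list in Section 6.3. Because you explicitly start from that list, the flawed clearing step is not load-bearing, but as stated it would fail if relied upon. Finally, for non-isomorphism it is cleaner to apply $L_2,L_3$ and quote the already-established implication $M\cong N\Rightarrow L_p(M)\cong L_p(N)$ together with \textbf{List(2)}, \textbf{List(3)}, rather than recomputing cofibre invariants of the polyhedra.
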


It is easy to recover the polyhedra from the matrices listed in Theorem \ref{theorem6.3}.
For example,    ~~$$\begin{footnotesize}\begin{tabular}{c|c|c|}
   \multicolumn{1}{c}{}&\multicolumn{1}{c} {$S^{n+3}$}&\multicolumn{1}{c} {$\mo{r}{n+2}$}\\
       \cline{2-3}
     $S^{n+2}$ & $1$ & $0$\\
       \cline{2-3}
      $C_{\eta}:\s{n}$ & $3$ & $1$\\
      \quad ${\s{n+2}}$ & $0$ & $0$\\
        \cline{2-3}
         $\mo{s}{n}$ & $1$& $0$\\
        \cline{2-3}
         \multicolumn{3}{c}{} \\
     \multicolumn{1}{c}{} & \multicolumn{2}{c} {$X(\eta 3 \eta)_s^r$}\\
     \end{tabular}\end{footnotesize}$$

 represents the  cone of the  map  $ S^{n+3}\vee \mo{r}{n+2}\rightarrow S^{n+2}\vee C_{\eta} \vee \mo{s}{n}$  corresponding to the matrix. Since
$$S^{n+2}\vee C_{\eta} \vee \mo{s}{n}=(S^{n+2}\vee S^{n}\vee S^{n})\cup_{i_2\eta}e^{n+2}\cup_{i_{3}3^s}e^{n+1}~,$$
we get that  the polyhedron corresponding to this matrix is
$$(S^{n+2}\vee S^{n}\vee S^{n}\vee S^{n+3})\cup_{i_2\eta}e^{n+2}\cup_{i_{3}3^r}e^{n+1}\cup_{i_{1}\eta\eta+i_{2}3+i_{3}1}e^{n+4}\cup_{i_{3}1+i_{4}3^r}e^{n+4}~,$$
where $i_{t}: X_{t}\hookrightarrow \bigvee_{j} X_{j}$ is the canonical inclusion of the summand.

Finally, from Corollary \ref{corollary 5.3}, Lemma \ref{lemma5.4} and Corollary \ref{corollary 5.5}, we obtain all the 2-torsion free indecomposable homotopy types of $\mathcal{A}\dag\mathcal{B}$, which completes the proof of Theorem \ref{maintheorem} (\textbf{Main theorem}).

\section{Concluding remarks}
\label{subsec:6.3}

In this paper, using the well known results about homotopy classes of maps between Moore spaces and suspended complex projective space and their compositions, we succeed in classifying indecomposable $\mathbf{F}^4_{n(2)}$ polyhedra. However the corresponding classification problems for the cases $\mathbf{F}^5_{n(2)}$ and $\mathbf{F}^6_{n(2)}$ are still open. We hope to return to this issue in the future publication. On the other hand, from the previous remark, it is crucial to understand globally a collection of spaces as a subcategory of  homotopy category of spaces.We will focus on this point in the future works.
\\
\\
\\

\textbf{Acknowledgements:} This work has been accepted for publication in SCIENCE CHINA Mathematics.
\\
\\
\\

\end{document}